\crefname{conjecture}{Conjecture}{Conjectures}
\Crefname{conjecture}{Conjecture}{Conjectures}
\title{Asymptotic convergence of restarted Anderson acceleration for certain normal linear systems\thanks{A published version of this preprint is available at \url{https://doi.org/10.1137/24M1672262}.}}
\author{Oliver A. Krzysik\thanks{Department of Applied Mathematics, University of Waterloo, Waterloo, Ontario, Canada. \textit{Present address:} Theoretical Division, Los Alamos National Laboratory, Los Alamos, New Mexico, USA
  (\email{okrzysik@lanl.gov}, \url{https://orcid.org/0000-0001-7880-6512}).}
\and Hans De Sterck\thanks{Department of Applied Mathematics, University of Waterloo, Waterloo, Ontario, Canada} 
  (\email{hans.desterck@uwaterloo.ca}).
  \and Adam Smith\thanks{Department of Applied Mathematics, University of Waterloo, Waterloo, Ontario, Canada
  (\email{ajsmith@uwaterloo.ca}).}
}
\DeclareMathOperator{\diag}{diag} 
\renewcommand{\i}[0]{\ensuremath{\mathrm{i}}}
\Crefname{subsection}{Section}{Sections}
\crefname{subsection}{Section}{Sections}
\crefname{section}{Section}{Sections}
\newcommand{\ra}[0]{\ensuremath{\rangle}}
\newcommand{\la}[0]{\ensuremath{\langle}}
\DeclareMathOperator*{\argmin}{arg\,min}
\newcommand{\wh}[1]{\widehat{#1}} 								  
\newcommand{\wt}[1]{\widetilde{#1}}
\begin{document}


\setcounter{page}{1}
\maketitle

\begin{abstract}
	Anderson acceleration (AA) is widely used for accelerating the convergence of an underlying fixed-point iteration $\bm{x}_{k+1} = \bm{q}( \bm{x}_{k} )$, $k = 0, 1, \ldots$, with $\bm{x}_k \in \mathbb{R}^n$, $\bm{q} \colon \mathbb{R}^n \to \mathbb{R}^n$.
Despite AA's widespread use, relatively little is understood theoretically about the extent to which it may accelerate the underlying fixed-point iteration.
To this end, we analyze a restarted variant of AA with a restart size of one, a method closely related to GMRES(1). 
We consider the case of $\bm{q}( \bm{x} ) = M \bm{x} + \bm{b}$ with matrix $M \in \mathbb{R}^{n \times n}$ either symmetric or skew symmetric.
For both classes of $M$ we compute the worst-case root-average asymptotic convergence factor of the AA method, partially relying on conjecture in the symmetric setting, proving that it is strictly smaller than that of the underlying fixed-point iteration.
For symmetric $M$, we show that the AA residual iteration corresponds to a fixed-point iteration for solving an eigenvector-dependent nonlinear eigenvalue problem (NEPv), and we show how this can result in the convergence factor strongly depending on the initial iterate, which we quantify exactly in certain special cases.
Conversely, for skew-symmetric $M$ we show that the AA residual iteration is closely related to a power iteration for $M$, and how this results in the convergence factor being independent of the initial iterate.
Supporting numerical results are given, which also indicate the theory is applicable to the more general setting of nonlinear $\bm{q}$ with Jacobian at the fixed point that is symmetric or skew symmetric.
\end{abstract}

\begin{keywords}
	Anderson acceleration, AA, Krylov methods, GMRES, fixed-point iteration, asymptotic convergence, eigenvector nonlinearity, NEPv, multigrid
\end{keywords}

%
\section{Introduction}
\label{sec:introduction}
Consider the problem of finding the fixed point $\bm{x} \in \mathbb{R}^n$ of the function ${\bm{q} \colon \mathbb{R}^n \to \mathbb{R}^n}$. That is, solving the system of equations
\begin{align} \label{eq:FP-prob}
\bm{x} = \bm{q}(\bm{x}).
\end{align}
The simplest iterative method for approximating $\bm{x}$ is the fixed-point iteration
\begin{align} \label{eq:FPI} 
\bm{x}_{k+1} = \bm{q}(\bm{x}_{k}), \quad k = 0, 1, \ldots,
\end{align}
starting from an initial iterate $\bm{x}_0 \approx \bm{x}$. 

Iterations of the type \eqref{eq:FPI} are ubiquitous throughout computational science, even if they are not always presented in such abstract form.
In many applications the convergence rate of \eqref{eq:FPI} may be undesirably slow, and so it is commonplace to accelerate it.
One such widely used method for this is Anderson acceleration (AA) \cite{Anderson1965}:
\begin{align} \label{eq:AA} \tag{AA}
\bm{x}_{k+1} 
= 
\bm{q}(\bm{x}_k)
+
\sum \limits_{i = 1}^{m_k} \beta_k^{(i)}  \big( \bm{q}(\bm{x}_k) - \bm{q}(\bm{x}_{k-i}) \big), 
\quad k = 0, 1, \ldots
\end{align} 
Defining the residual of \eqref{eq:FP-prob} at the point $\bm{x}_k \approx \bm{x}$ as
\begin{align} \label{eq:r-def}
\bm{r}_k = \bm{x}_k - \bm{q}(\bm{x}_k),
\end{align}
the extrapolation coefficients in \eqref{eq:AA} are found by solving the least squares problem
\begin{align} \label{eq:beta-opt-def}
\bm{\beta}_{k} 
=
\big( \beta_k^{(1)}, \ldots, \beta_{k}^{(m_k)} \big)
=
\argmin \limits_{\bm{\beta} \in \mathbb{R}^{m_k}}
\frac{1}{2}
\left\Vert 
\bm{r}_k
+
\sum \limits_{i = 1}^{m_k} \beta_k^{(i)} \big( \bm{r}_k - \bm{r}_{k-i} \big)
\right\Vert^2
\end{align}
at each iteration $k$.
Here $\Vert \cdot \Vert$ is the $\ell^2$-norm.
One hopes that the sequence of iterates \eqref{eq:AA} converges more quickly to the solution of \eqref{eq:FP-prob} than \eqref{eq:FPI} does.
The AA variant \eqref{eq:AA} is consistent with those considered in \cite{Anderson1965,Walker_Ni_2011,Toth_Kelley_2015}, but we note that other variants exist,
including some based on nonlinear Krylov methods \cite{DeSterck-2012-NGMRES,Washio-Oosterlee-1997}.

Commonly used in practice is windowed AA, denoted as AA$(m)$, where $\bm{x}_{k+1}$ is computed from a sliding window of $m+1$ previous iterates, $m_k = \min (m, k)$.
Little is known about the asymptotic convergence behavior of AA$(m)$, even in the simplest setting of affine $\bm{q}$ and small $m$.
Toth \& Kelley \cite{Toth_Kelley_2015} first proved convergence of the method some 50 years after it was initially proposed \cite{Anderson1965}, wherein they showed that if $\bm{q}(\bm{x}) = M \bm{x} + \bm{b}$, then convergence of AA$(m)$ is, in a certain sense, ``no worse'' than that of the underlying iteration \eqref{eq:FPI}.
Certain local results for AA$(m)$ are known, showing that on a given iteration AA may provide acceleration \cite{Evans-etal-2020,DeSterck-etal-2024}.
However, only very recently was it shown by Garner et al. \cite{Garner-etal-2023} that AA$(m)$ does asymptotically converge faster than the underlying method \eqref{eq:FPI}, at least when $\bm{q}( \bm{x} ) = M \bm{x} + \bm{b}$ with $M$ a symmetric matrix, or when $\bm{q}(\bm{x})$ has a symmetric Jacobian at the fixed point.

Despite the progress made in \cite{Garner-etal-2023}, much remains unclear about the convergence of AA.
For example, its convergence factor has been empirically observed to depend on the initial iterate, sometimes strongly, and theoretically this is not well understood \cite{DeSterck-He-2022-AA-conv,Chen-Vuik-2023,DeSterck-etal-2024}. This phenomenon means that standard techniques used for analyzing related stationary linear iterations, such as those in \cite{DeSterck-He-2021-stationary-AA,Hong-Yavneh-2021}, are not applicable for analyzing it.
Moreover, while \cite{Garner-etal-2023} bounded the asymptotic convergence factor of AA$(m)$, it is not totally clear how tight this bound is, and it remains unclear whether it is possible to compute exactly the convergence factor of AA in certain special cases.
The current paper investigates both of these questions in the context of a restarted AA method, providing exact expressions for its asymptotic convergence factor in certain cases, including its exact dependence on the initial iterate.
To the best of our knowledge, these are the first results of their kind for any AA method.

Restarted AA is in some respects simpler than windowed AA, but they are similar enough that ultimately our analysis may have implications for analyzing windowed AA too.
Nonetheless, restarted AA-like methods are certainly used in practice \cite{Fang_Saad_2009,
Banerjee-etal-2016,
Henderson-etal-2019,
Ouyang-etal-2024,
He-2022-gda,
Pasini2019,
Both-etal-2019}, including to solve linear systems \cite{
Pratapa-Suryanarayana-2015,
Pratapa-etal-2016-AR,
Suryanarayana-etal-2019-AR}.
The AA iteration analyzed here is the same as that analyzed by Both et al. \cite{Both-etal-2019}, who proved several convergence results, and some results here can be seen as extensions of those results.
The remainder of this paper is organized as follows.
\Cref{sec:AA1} presents simplifying assumptions considered in this paper and the develops the associated AA residual propagator.
\Cref{sec:con-and-R-dif} introduces convergence notions and analyzes the differentiability of the residual propagator.
\Cref{sec:symm,sec:skew} analyze theoretically and numerically the convergence of the AA method when the iteration function is $\bm{q}( \bm{x} ) = M \bm{x} + \bm{b}$ with matrix $M$ symmetric and skew symmetric, respectively.
Concluding statements are given in \cref{sec:conclusion}.
The MATLAB code used to generate the numerical results in this paper is available at \url{https://github.com/okrzysik/rAA1-paper-code} (v1.0.0).

%
\section{Preliminaries}
\label{sec:AA1}

\cref{sec:ass} details the simplifying assumptions used in this paper, and \cref{sec:res-prop} develops the associated residual propagation operator.

%
\subsection{Simplifying assumptions}
\label{sec:ass}

We introduce three assumptions leading to a simplified version of the general iteration \eqref{eq:AA}.
Our goal is to consider an iteration that is simple enough that we can understand its convergence behavior, yet has convergence properties resembling those of the general iteration \eqref{eq:AA}.

\underline{\textit{A1. Affine iteration function $\bm{q}(\bm{x})$.}}
We suppose that
\begin{align} \label{eq:q-affine-def}
\bm{q}(\bm{x}) = M \bm{x} + \bm{b},
\end{align}
with known $M \in \mathbb{R}^{n \times n}$ and $\bm{b} \in \mathbb{R}^n$. 
Under this assumption, the underling fixed-point iteration \eqref{eq:FPI} becomes
\begin{align} \tag{UFPI} \label{eq:PI-iter}
\bm{x}_{k+1}
=
\bm{q}(\bm{x}_k)
=
M \bm{x}_k + \bm{b},
\quad
k = 0, 1, \ldots
\end{align}
Furthermore, with $\bm{q}$ as in \eqref{eq:q-affine-def}, \eqref{eq:FP-prob} is equivalent to a linear system of equations:
\begin{align} \label{eq:A-def}
\bm{x} = \bm{q}(\bm{x})
\quad
\Longleftrightarrow
\quad
A \bm{x} = \bm{b}, 
\quad
A := I - M.
\end{align}
We assume that $A$ is invertible, and $M \neq 0$.
With $\bm{q}$ as in \eqref{eq:q-affine-def}, \eqref{eq:AA} is as an iterative method for solving $A \bm{x} = \bm{b}$, and is closely related to GMRES for doing so \cite{Saad-Schultz-1986,Walker_Ni_2011}.
To make the analysis more tractable, we further assume: \underline{\textit{A1(i)}}. $M$ is symmetric (see \cref{sec:symm}) or \underline{\textit{A1(ii)}} $M$ is skew symmetric (see \cref{sec:skew}).
The method we analyze is closely related to GMRES(1), for which the convergence for non-normal matrices $A$ can be very complicated. For example, Embree \cite{Embree2003} shows cases where the convergence factor of GMRES(1) exhibits fractal-like dependence on the initial iterate when  $A$ is non-symmetric.
The assumption of affine $\bm{q}$ is relaxed in \cref{sec:nonlin}.

\underline{\textit{A2. Memory parameter $m = 1$.}} 
AA($m$) can exhibit extremely complicated convergence behaviour for small $m$, even for $m = 1$ \cite{DeSterck-He-2022-AA-conv,DeSterck-etal-2024}.
We also note that small $m$ is not without practical value; for example, $m = 1$ or $m = 2$ is a common choice in electronic structure computations \cite{Toth_Kelley_2015}.
Moreover, there are many real-world problems for which small $m$ may provide significant acceleration \cite{Nataj-He-2024,Pollock-Rebholz-2021-noncontractive}.

\underline{\textit{A3. Restarting.}} While windowing is perhaps used more frequently in practice, restarted AA-like iterations are also widely used \cite{Fang_Saad_2009,Banerjee-etal-2016,Henderson-etal-2019,Pratapa-Suryanarayana-2015,Ouyang-etal-2024,He-2022-gda,Pasini2019,Suryanarayana-etal-2019-AR,Pratapa-etal-2016-AR,Both-etal-2019}.
Moreover, it remains unclear whether windowing is in fact more efficient than restarting, see, e.g., numerical tests in \cite{DeSterck-He-2022-AA-conv,Garner-etal-2023}.
At each restart we discard all previous information akin to restarts used in GMRES($m$), i.e., we take $m_k = k \, \mathrm{mod} \, 2$ in \eqref{eq:AA}.

Applying A1, A2, and A3 to \eqref{eq:AA} results in the following ``rAA(1)'' iteration:
\begin{subequations}  \label{eq:rAA1-iter}
\begin{align}
\bm{x}_{k+1} 
&= 
M \bm{x}_k  + \bm{b}, 
&&k = 0, 2, 4, \ldots,
\\
\bm{x}_{k+1} 
&= 
M \bm{x}_k + \beta_k M (\bm{x}_k - \bm{x}_{k-1}) + \bm{b}, 
&&k = 1, 3, 5, \ldots,
\\
\beta_{k} &= \argmin \limits_{\beta \in \mathbb{R}} 
\left\Vert
\bm{r}_k + \beta \big( \bm{r}_k - \bm{r}_{k-1} \big)
\right\Vert^2.
\end{align}
\end{subequations}
We again note that this is equivalent to the restarted AA iteration analyzed in \cite{Both-etal-2019}.

\subsection{Residual propagation}

\label{sec:res-prop}

To analyze convergence of \eqref{eq:PI-iter} and \eqref{eq:rAA1-iter} we study their associated residual iterations.
Imposing $\bm{q}$ is as in \eqref{eq:q-affine-def}, the $k$th residual vector \eqref{eq:r-def} is
$
\bm{r}_k
=
(I - M) \bm{x}_k - \bm{b}
=
A \bm{x}_k - \bm{b}.
$
As such, the $k$th iterate can be written $\bm{x}_k = A^{-1} (\bm{r}_k + \bm{b})$.
Plugging into \eqref{eq:PI-iter} gives the residual iteration:
\begin{align} 
\label{eq:PI-iter-res}
\tag{UFPI-res}
\bm{r}_{k+1}
=
M \bm{r}_k,
\quad
k = 0, 1, \ldots
\end{align}
Similarly, plugging $\bm{x}_k = A^{-1} (\bm{r}_k + \bm{b})$ into \eqref{eq:rAA1-iter} gives the rAA(1) residual iteration:
\begin{subequations} \label{eq:rAA1-res}
\begin{align}
\bm{r}_{k+1} \label{eq:rAA1-res-a}
&= 
M \bm{r}_k, 
&&k = 0, 2, 4, \ldots,
\\
\bm{r}_{k+1} \label{eq:rAA1-res-b}
&= 
M \big[ \bm{r}_k + \beta_k (\bm{r}_k - \bm{r}_{k-1}) \big], 
&&k = 1, 3, 5, \ldots
\end{align}
\end{subequations}
We can write \eqref{eq:rAA1-res} as a two-step iteration by eliminating the $k = 0, 2, 4, \ldots$ updates resulting in
$
\bm{r}_{k+1} 
=
M [ \bm{r}_k + \beta_k (\bm{r}_k - \bm{r}_{k-1}) ] 
=
M [ I - (1 + \beta_k) A  ] \bm{r}_{k-1},
k = 1, 3, 5, \ldots,
$
where 
$
1 + \beta_k 
= 
1 +
\argmin_{\beta \in \mathbb{R}} 
\left\Vert
M \bm{r}_{k-1} - \beta A \bm{r}_{k-1}
\right\Vert^2
=
\frac{\la A\bm{r}_{k-1}, \bm{r}_{k-1} \ra}{\la A \bm{r}_{k-1}, A \bm{r}_{k-1} \ra}$.
As such, \textit{every second} rAA(1) residual \eqref{eq:rAA1-iter} satisfies
\begin{align} \label{eq:rAA1-two-step-iter-res} \tag{rAA-res}
\bm{r}_{k+1} = {\cal R}( \bm{r}_{k-1} ), \quad k = 1, 3, 5, \ldots
\end{align}
with ${\cal R} : \mathbb{R}^n \setminus \{ \bm{0} \} \to \mathbb{R}^n$ the  \textit{two-step} residual propagation operator given by
\begin{subequations}
\begin{align} \label{eq:calR-def}
{\cal R}(\bm{v}) 
&:= 
M \big[ I - \alpha(\bm{v}) A \big] \bm{v}, 
\\
\intertext{
where $\alpha : \mathbb{R}^n \setminus \{ \bm{0} \} \to \mathbb{R}$ is defined by}
\label{eq:alpha-def}
\alpha( \bm{v} ) 
&:= 
\frac{\la A\bm{v}, \bm{v} \ra}{\la A \bm{v}, A \bm{v} \ra}
=
\frac{\la (A\bm{v}), A^{-1} (A\bm{v}) \ra}{\la (A\bm{v}), (A\bm{v}) \ra}
=
\frac{\la (A\bm{v}), Y (A\bm{v}) \ra}{\la (A\bm{v}), (A\bm{v}) \ra},
\end{align}
\end{subequations}
with $Y := \tfrac{1}{2} \big(A^{-1} + A^{-\top} \big)$ the symmetric part of $A^{-1}$.
That is, $\alpha( \bm{v} )$ is the Rayleigh quotient between the vector $A \bm{v}$ and the symmetric part of $A^{-1}$. Note that this Rayleigh quotient not being defined at zero is consistent with the least squares problem for $\beta_k$ being rank deficient, which occurs if and only if the iteration \eqref{eq:rAA1-iter} produces the exact solution, $\bm{x}_{k-1} = \bm{x}$.

\textit{Relationship between rAA(1) and GMRES(1):} 
When applied to solve $A \bm{x} = \bm{b}$, GMRES(1) residuals $\bm{r}_{k}^{\mathrm G}$ satisfy the one-step iteration
$\bm{r}_{k+1}^{\mathrm G} = [I - \alpha(\bm{r}_{k}^{\mathrm G}) A] \bm{r}_{k}^{\mathrm G}$, $k = 0, 1, \ldots$; see \cite[Section 5.3.2]{Saad2003}.\footnote{The so-called ``minimal residual iteration'' from \cite[Section 5.3.2]{Saad2003} is GMRES(1).} 
Conversely, rAA(1) residuals $\bm{r}_{k}^{\mathrm A}$ satisfy the two-step iteration $\bm{r}_{k+1}^{\mathrm A} = M  [I - \alpha(\bm{r}_{k-1}^{\mathrm A}) A] \bm{r}_{k-1}^{\mathrm A}$, $k = 1, 3, \ldots$.
Thus, rAA(1) computes $\bm{r}_{k+1}^{\mathrm A}$ from $\bm{r}_{k-1}^{\mathrm A}$ by taking a GMRES(1) step on $\bm{r}_{k-1}^{\mathrm A}$, and then applying $M$ to the resulting vector.
In other words, two consecutive rAA(1) steps consist of taking a GMRES(1) step on the current residual followed by applying the fixed-point iteration matrix $M$.

\section{Convergence notions, and differentiability of residual propagator}
\label{sec:con-and-R-dif}

To characterize the convergence of \eqref{eq:rAA1-iter} we must quantify what we mean by convergence. 
Here we consider root-convergence of the residuals; see \cite[Section 9.2]{Ortega2000}.
\begin{definition}[Root convergence]
\label{def:conv-fac}
Consider an iterative method producing the sequence of residual vectors $\{  \bm{r}_k \} \subset \mathbb{R}^n $ converging to zero. Then the number
\begin{align}
\varrho( \bm{r}_0 ) := \limsup \limits_{k \to \infty} \varrho_k( \bm{r}_0 ),
\quad
\mathrm{where}
\quad
\varrho_k( \bm{r}_0 )
:=
\Vert \bm{r}_{k} \Vert^{1/k},
\end{align}
is the root-convergence factor of the method.
If $\varrho( \bm{r}_0 ) \in (0, 1)$, we say that the method converges r-linearly with r-linear convergence factor $\varrho( \bm{r}_0 )$.
We call
\begin{align}
\varrho^{\rm{worst}} 
:= 
\max_{\bm{r}_0} \varrho( \bm{r}_0 ) 
\end{align}
the worst-case root-convergence factor of the method.
\end{definition}
We also recall the following classic result which states that if the iteration function of a fixed-point iteration is differentiable at the underlying fixed point, then the worst-case r-linear convergence factor is determined by the spectral radius of the Jacobian of the iteration function at the fixed point (see \cite[Theorems 10.1.3 \& 10.1.4]{Ortega2000}).
\begin{theorem}[Ostrowski] \label{thm:ostrowski}
Suppose that ${\cal A} \colon D \subset \mathbb{R}^n \to \mathbb{R}^n$ has a fixed-point $\bm{x}_*$ that is an interior point of $D$ and is differentiable at $\bm{x}_*$. If the spectral radius of ${\cal A}'( \bm{x}_* )$ satisfies $ \rho({\cal A}'( \bm{x}_* )) \in (0, 1)$, then the fixed-point iteration $\bm{x}_{k+1} = {\cal A}( \bm{x}_k )$, $k = 0, 1, \ldots,$ converges r-linearly with r-linear convergence factor $\rho({\cal A}'( \bm{x}_* ))$.

\end{theorem}

A straightforward application of \cref{thm:ostrowski} shows that the r-linear convergence factor of the residual iteration \eqref{eq:PI-iter-res} is
\begin{align} \label{eq:rho-PI-wc}
\varrho^{\rm{worst}}_{\rm{FP}} = \rho(M).
\end{align}
Furthermore, for \eqref{eq:PI-iter-res} the initial-residual-specific convergence factor $\varrho_{\rm FP}( \bm{r}_0 )$ depends on the initial residual $\bm{r}_0$ only when $\bm{r}_0$ is parallel to an eigenvector of $M$, and otherwise it does not---the iteration \eqref{eq:PI-iter-res} is nothing other than a non-normalized power iteration for computing the dominant eigenvector of $M$.

Now consider the rAA(1) residual iteration \eqref{eq:rAA1-two-step-iter-res}.
Immediately we encounter the issue that the iteration function ${\cal R}$ is not defined at $\bm{r}_{k-1} = \bm{0}$, and, thus, $\bm{r}_{k-1} = \bm{0}$ cannot possibly be a fixed-point of $\bm{r}_{k+1} = {\cal R}( \bm{r}_{k-1} )$.
Recalling that zero is not in the domain of ${\cal R}$ due to rank deficiency of the least squares problem in \eqref{eq:rAA1-iter}, it is possible to extend the domain of ${\cal R}$ by choosing one of the infinite least squares solutions when zero is encountered. For example, in the context of the more general iteration \eqref{eq:AA}, De Sterck \& He \cite{DeSterck-He-2022-AA-conv} advocate using the pseudo inverse when rank deficiency is encountered.
However, none of these infinite solutions is particularly meaningful in the current setting since this rank deficiency is synonymous with the exact solution being found, at which point the iteration should be halted; note that rank deficiency in \eqref{eq:beta-opt-def} for the general iteration \eqref{eq:AA} can occur for reasons other than the exact solution being reached \cite{Pollock-Rebholz-2023,DeSterck-He-2022-AA-conv}.
Thus, while we could modify ${\cal R}$ to include zero in its domain, such a modification does not seem to be helpful and would not be reflective of how the iteration should be used in practice (i.e., halting at the exact solution).
This begs the question: What does the spectral radius of the Jacobian of ${\cal R}(\bm{v})$ as $\bm{v} \to \bm{0}$ tell us about convergence?
To this end we present \cref{lem:R-cont-dif} below which requires the following definition.

\begin{definition}[Directional derivative]
\label{def:direct-deriv}
Let $F \colon U \subset \mathbb{R}^n \to \mathbb{R}^m$ be a function on the open set $U$. We call $\mathfrak{D} F(\bm{x}, \bm{d})$ defined by
\begin{align}
\mathfrak{D} F(\bm{x}, \bm{d})
=
\lim \limits_{h \to 0^+} \frac{F(\bm{x} + h \bm{d}) - F(\bm{x})}{h}
\end{align}
the directional derivative of $F$ at $\bm{x}$ in the direction $\bm{d}$ if the limit exists.
If $F$ is differentiable at $\bm{x}$ with Jacobian $F'(\bm{x})$, then $\mathfrak{D} F(\bm{x}, \bm{d}) = F'( \bm{x} ) \bm{d}$.

\end{definition}

\begin{lemma}[Continuity and differentiability of ${\cal R}$]
\label{lem:R-cont-dif}
The following properties hold for the residual propagator $
{\cal R}( \bm{v} ) := M[I - \alpha(\bm{v}) A] \bm{v}
$ defined in \eqref{eq:calR-def}.

\begin{enumerate}

\item While ${\cal R}(\bm{v})$ is not defined at $\bm{v} = \bm{0}$, its limit exists there:
$
\lim_{ \bm{v} \to \bm{0} }
{\cal R}( \bm{v} )
=
\bm{0}.
$

\item 

\begin{enumerate}

\item The directional derivative of ${\cal R}$ at $\bm{v} \neq \bm{0}$ in the direction $\bm{d}$ is 
\begin{align} \label{eq:skew-DR}
\mathfrak{D} {\cal R}(\bm{v}, \bm{d})
=
M [I - \alpha( \bm{v} ) A] \bm{d}
- 
\la \alpha'( \bm{v} ), \bm{d} \ra  M A \bm{v},
\end{align}
where the column vector
\begin{align} \label{eq:alpha-prime}
\alpha'( \bm{v} ) = \frac{2}{\Vert A \bm{v} \Vert^2} A^\top
\big[Y - \alpha( \bm{v} ) I \big] A \bm{v} \in \mathbb{R}^n
\end{align}
is the gradient of $\alpha( \bm{v} )$ in \eqref{eq:alpha-def}, with $Y$ the symmetric part of $A^{-1}$.

\item The function ${\cal R}(\bm{v})$ is differentiable at $\bm{v} \neq \bm{0}$ if and only if $A \bm{v}$ is an eigenvector of $Y$, with Jacobian ${\cal R}'(\bm{v}) = M [I - \alpha( \bm{v} ) A]$. 

\item The limit $\lim_{ \bm{v} \to \bm{0} } \mathfrak{D} {\cal R}(\bm{v}, \bm{d})$ does not exist. 

\end{enumerate}

\end{enumerate}

\end{lemma}

Before proving these properties some commentary is warranted.
From property 1, the fact that $\lim_{ \bm{v} \to \bm{0} }
{\cal R}( \bm{v} )
=
\bm{0}$ is consistent with the fixed-point iteration $\bm{v} \gets {\cal R}( \bm{v} )$ converging to $\bm{v} = \bm{0}$ despite $\bm{v} = \bm{0}$ not being a fixed point of ${\cal R}$.
Interestingly property 2(b) states that the iteration function is not differentiable at all points $\bm{v} \neq \bm{0}$, but only at certain special points.
This suggests that the Jacobian of ${\cal R}(\bm{v})$ may not exist as $\bm{v} \to \bm{0}$, and indeed, property 2(c) tells us that even the limit of the directional derivative of ${\cal R}(\bm{v})$ does not exist as $\bm{v} \to \bm{0}$.
Unfortunately, this dashes our hopes of assessing convergence by considering the Jacobian of ${\cal R}(\bm{v})$ as $\bm{v} \to \bm{0}$.

De Sterck \& He \cite{DeSterck-He-2022-AA-conv} computed the directional derivative of the AA$(m)$ propagator, and, while they ultimately found that it was not differentiable at the fixed point, they pondered whether convergence information is contained in this directional derivative.
In \cref{sec:skew} we show this is the case when rAA(1) is applied to skew symmetric $M$.

\begin{proof}
1. From \eqref{eq:calR-def} and \eqref{eq:alpha-def} it is clear that the domain of ${\cal R}$ is equal to that of $\alpha$, and $\alpha(\bm{v})$ is defined everywhere except at $\bm{v} = \bm{0}$. Note that the limit 
$
\lim_{ \bm{v} \to \bm{0} } \alpha( \bm{v} ) = \lim_{ \bm{w} \to \bm{0} } \la \bm{w}, Y \bm{w} \ra / \la \bm{w}, \bm{w} \ra 
$
does not exist because this value depends on how $\bm{v} \to \bm{0}$, recalling that $\alpha(\bm{v})$ can attain any value in the range of the Rayleigh quotient of $Y$ depending on how $\bm{v} \to \bm{0}$.
Despite this, the limit $\lim_{ \bm{v} \to \bm{0} } {\cal R}( \bm{v} )$ does exist as we now demonstrate.
For any $\bm{v} \neq \bm{0}$ we have $0 \leq \Vert M[I - \alpha(\bm{v}) A] \bm{v} \Vert \leq X \cdot \Vert \bm{v} \Vert$, with $X = \max_{\bm{v} \neq \bm{0}} \Vert M[I - \alpha(\bm{v}) A] \Vert$. 
Taking limits on all sides gives
$0 
\leq 
\lim_{ \bm{v} \to \bm{0} } \Vert {\cal R}( \bm{v} ) \Vert
\leq 
X \cdot \lim_{ \bm{v} \to \bm{0} } \Vert \bm{v} \Vert = 0$. 
By the squeeze theorem and the norm property that 
$\Vert \bm{z} \Vert = 0$ if and only if $\bm{z} = \bm{0}$ we have $\lim_{ \bm{v} \to \bm{0} }
{\cal R}( \bm{v} ) = \bm{0}$.\\
2(a). Recall \cref{def:direct-deriv}. For any $h > 0$ and $\bm{v} \neq \bm{0}$ we have
\begin{align*}
&\frac{{\cal R}(\bm{v} + h \bm{d}) -  {\cal R}(\bm{v})}{h}
=
\frac{M[I - \alpha(\bm{v} + h \bm{d}) A] ( \bm{v} + h \bm{d} ) - M[I - \alpha(\bm{v}) A] \bm{v}}{h},
\\
&=
\frac{
M[I - \alpha( \bm{v} + h \bm{d} ) A] h \bm{d}
+
[ \alpha(\bm{v}) - \alpha( \bm{v} + h \bm{d} ) ] M A \bm{v}}{h},
\\
&=
M[I - \alpha( \bm{v} + h \bm{d} ) A] \bm{d}
-
[ \la \alpha'( \bm{v} ), \bm{d} \ra   + {\cal O}(h) ] M A \bm{v}.
\end{align*}
Taking $h \to 0$ gives the directional derivative in \eqref{eq:skew-DR}.
To compute the derivative $\alpha'( \bm{v} )$ apply the quotient rule to the right-most expression for $\alpha(\bm{v})$ in \eqref{eq:alpha-def}.\\
2(b). For $\bm{v}$ where the scalar $\la \alpha'( \bm{v} ), \bm{d} \ra $ in \eqref{eq:skew-DR} is non-zero, $\mathfrak{D} {\cal R}(\bm{v}, \bm{d})$ cannot be written as a matrix-vector product ${\cal R}'(\bm{v}) \bm{d}$, meaning that ${\cal R}$ is not differentiable there, as per \cref{def:direct-deriv}.
However, for $\bm{v}$ where $\la \alpha'( \bm{v} ), \bm{d} \ra $ vanishes, the directional derivative can be written as a matrix-vector product ${\cal R}'(\bm{v}) \bm{d}$, and hence ${\cal R}$ is differentiable at such $\bm{v}$.
From \eqref{eq:alpha-prime}, note that $\alpha'( \bm{v} )$ vanishes for $\bm{v} \neq \bm{0}$ if and only if $Y(A \bm{v}) = \alpha(\bm{v}) (A \bm{v}) = \tfrac{\la (A \bm{v}), Y (A \bm{v}) \ra}{\la (A \bm{v}) ,(A \bm{v}) \ra} (A \bm{v})$, i.e, $A \bm{v}$ is an eigenvector of $Y$. \\
2(c). From point 1., $\lim_{ \bm{v} \to \bm{0} }  \alpha( \bm{v} )$ does not exist. 
As such, $\lim_{ \bm{v} \to \bm{0} } \mathfrak{D} {\cal R}(\bm{v}, \bm{d})
=
\lim_{ \bm{v} \to \bm{0} }
\big\{
M [I - \alpha( \bm{v} ) A] \bm{d}
- 
\big( \la \alpha'( \bm{v} ), \bm{d} \ra M A \bm{v} \big\}$ 
cannot exist.
\end{proof}

\section{Convergence of rAA(1) for symmetric $M$}
\label{sec:symm}

Given the previous discussion, we now pursue an alternative path to \cref{thm:ostrowski} for studying convergence of rAA(1) in the case where $M$ is a symmetric matrix.
To this end we introduce some new notation.
Let $X  \colon \mathbb{R}^n \setminus \{ \bm{0} \} \to \mathbb{R}^{n \times n}$ be the vector-dependent symmetric matrix
\begin{align} \label{eq:X-def}
X( \bm{v} ) = M^2[I - \alpha( {\cal R}(\bm{v}) ) A][I - \alpha( \bm{v} )A],
\end{align}
where ${\cal R}$ is the two-step residual propagator given in \eqref{eq:calR-def}, and notice that $X$ is defined so that $X( \bm{v} ) \bm{v} = {\cal R}({\cal R}( \bm{v} ))$. 
That is, every fourth rAA(1) residual is given by the fixed-point iteration
\begin{align} \label{eq:X-fp}
\bm{r}_{4(k+1)} = X( \bm{r}_{4k} ) \bm{r}_{4 k}, \quad k = 0, 1, \ldots
\end{align}
If the rAA(1) method converges, then clearly iterates of \eqref{eq:X-fp} limit to the zero vector. However, the relevant question for computing the asymptotic convergence factor of the method is \textit{how} exactly these vectors limit to zero.
Consider by way of analogy the residual iteration \eqref{eq:PI-iter-res}: If the method converges, then the iterates limit to the zero vector, but, importantly, they do so by limiting to the dominant eigenvector of the matrix $M$, resulting in the convergence factor \eqref{eq:rho-PI-wc}.

The remainder of this section is structured as follows. \Cref{sec:symm-eig} considers in further detail the iteration \eqref{eq:X-fp}. These results are then used to compute convergence factors in \cref{sec:symm-conv}, with numerical results following in \cref{sec:symm-num-res,sec:nonlin}.
%

\subsection{An eigenvector-dependent nonlinear eigenvalue problem}
\label{sec:symm-eig}

Here we consider in more detail the fixed-point iteration \eqref{eq:X-fp}.
To this end, let us consider the following problem involving the symmetric matrix $X$ from \eqref{eq:X-def}:
\begin{align} \label{eq:NEPv1}
X( \bm{z} ) \bm{z} = \lambda \bm{z}.
\end{align}
This is what is known as an eigenvector-dependent nonlinear eigenvalue problem, commonly abbreviated as a NEPv or NEPv1 (specifying single eigenvector nonlinearity).
NEPv's arise in, for example, quantum chemistry \cite{Cai-etal-2018}, Rayleigh quotient optimization \cite{Bai-etal-2018}, and $p$-Laplacian eigenproblems \cite{Upadhyaya-etal-2021}, and have seen increasing attention over recent years.
However, we are unaware of their application previously for analyzing iterative methods (at least methods not directly associated with solving NEPv's), and it is interesting to consider whether they have applications for analyzing convergence of Krylov-type methods more generally.

Observe that any vector $\bm{z}$ satisfying \eqref{eq:NEPv1} is a ``fixed point'' of \eqref{eq:X-fp}, in the sense that if $\bm{r}_{4k} = \bm{z}$, then $\bm{r}_{4(k+1)} = X( \bm{z} ) \bm{z} = \lambda \bm{z} = \lambda \bm{r}_{4k}$.
This is analogous to how an eigenvector $\bm{y}$ of $M$ satisfying $M \bm{y} = m \bm{y}$ is a fixed point of the non-normalized power iteration $\bm{y}_{k+1} = M \bm{y}_k$, $k = 0, 1, \ldots$.
The following theorem characterizes (at least some of) the solutions to the NEPv1 \eqref{eq:NEPv1}.
\begin{theorem}
\label{thm:eigvec}
Let ${\cal R}$ be the two-step residual propagator \eqref{eq:calR-def}, and $X$ the four-step residual propagation matrix \eqref{eq:X-def}.
Let $0 \neq c_i, c_j \in \mathbb{R}$, and define $\bm{z}_{ij}(\varepsilon)$ as
\begin{align} \label{eq:z-def-2}
\bm{z}_{ij}( \varepsilon ) := c_i \bm{v}_i + c_j \bm{v}_j 
= 
c_i ( \bm{v}_i + \varepsilon \bm{v}_j),
\quad
\varepsilon := \frac{c_j}{c_i},
\end{align}
with $\bm{v}_i, \bm{v}_j, i \neq j,$ two orthogonal eigenvectors of $M$ with eigenvalues $m_i$ and $m_j$, respectively.
Then, if $m_i = m_j$, 
\begin{align} \label{eq:Rz-zero}
{\cal R}(\bm{z}_{ij}( \varepsilon )) = \bm{0}.
\end{align}
Otherwise, if $m_i \neq m_j$, then $\bm{z}_{ij}(\varepsilon)$ solves the NEPv1 \eqref{eq:NEPv1} with corresponding eigenvalue $\lambda_{ij}( \varepsilon ) \in \mathbb{R}$ given by
\begin{align} \label{eq:lambda}
\lambda_{ij}( \varepsilon ) 
=
\frac{(m_j - m_i)^2}{(m_i-1)^2 +  \varepsilon^2 (m_j - 1 )^2}
\,
\frac{(m_i m_j)^2}{\displaystyle m_i^2 + \frac{m_j^2}{\varepsilon^2}   }.
\end{align}

\end{theorem}

\begin{proof}
See \cref{app:proof-thm-eigvec}.
\end{proof}

Without further study, it remains unclear whether solutions to \eqref{eq:NEPv1} exist other than those characterized in \cref{thm:eigvec}.
Based on numerical evidence (see Supplementary Materials Section \ref{SMsec:conj-num-res}),
we suspect that iterates of \eqref{eq:X-fp} converge to solutions of the NEPv1: 
\begin{conjecture} \label{conj:X-fp-conv}
For any initial iterate $\bm{r}_0$, iterates of the fixed-point iteration \eqref{eq:X-fp} converge to a solution of the NEPv1 \eqref{eq:NEPv1}.
\end{conjecture}

NEPv's are typically solved numerically with so-called self-consistent field (SCF) iteration \cite{Cai-etal-2018,Upadhyaya-etal-2021}, for which the convergence analysis is not straightforward \cite{Cai-etal-2018}. 
Interestingly, the fixed-point iteration \eqref{eq:X-fp} can be interpreted as a particular instance of an inexact SCF iteration.\footnote{Given iterates $(\lambda_k, \bm{z}_k)$ approximately solving the NEPv1 \eqref{eq:NEPv1}, a single SCF iteration would compute new iterates $(\lambda_{k+1}, \bm{z}_{k+1})$ as one of the eigenpairs of the lagged, linear eigenvalue problem $X( \bm{z}_k ) \bm{z}_{k+1} = \lambda_{k+1} \bm{z}_{k+1}$.
The fixed-point iteration \eqref{eq:X-fp} is of this form. That is, given an $\bm{r}_{4k}$, \eqref{eq:X-fp} produces a new iterate $\bm{r}_{4(k+1)} = X( \bm{r}_{4k} ) \bm{r}_{4k}$ which corresponds approximately to an eigenvector of $X( \bm{r}_{4k} )$ computed with a single non-normalized power iteration initialized with $\bm{r}_{4k}$.}
As such, proving whether iterates of \eqref{eq:X-fp} converge to a solution of \eqref{eq:NEPv1}, and to which solution they converge, is likely to be rather complicated, particularly because, as per \cref{thm:eigvec}, \eqref{eq:NEPv1} has an infinite number of solutions.
While our numerical results indicate that iterates of \eqref{eq:X-fp} converge to a solution of \eqref{eq:NEPv1}, the particular solution can depend strongly on $\bm{r}_0$, which is perhaps unsurprising given the discussion above.

A consequence of \cref{thm:eigvec} is that if the residual iteration  \eqref{eq:rAA1-two-step-iter-res} is initialized with a linear combination of two eigenvectors of $M$, then the resulting residual vectors will be four-periodic---periodic in the sense that $\bm{r}_4 = \lambda \bm{r}_0$, $\bm{r}_8 = \lambda \bm{r}_4$, $\bm{r}_{12} = \lambda \bm{r}_8$, etc. for some constant $\lambda$.
We note that Both et al. \cite{Both-etal-2019} made this same observation, albeit with errors rather than residuals, wherein they used this to bound the convergence factor of rAA(1) under certain circumstances.\footnote{Specifically, what 
\cite[Lemma 4]{Both-etal-2019} shows is that if at iteration $k$ the rAA(1) error vector satisfies $\bm{e}_{k} \in \mathrm{span} \{\bm{v}_i, \bm{v}_j \} $, then $\bm{e}_{k+4} \leq c \bm{e}_{k}$, where, in our notation, $c$ is equivalent to $\max_{\varepsilon \in \mathbb{R}} \lambda_{ij}(\varepsilon)$.}
Notice that if \cref{conj:X-fp-conv} holds, then asymptotically rAA(1) residual vectors will be four-periodic, with convergence factor possibly depending on $\bm{r}_0$.
We remark that this result has a close connection with a result from Baker et al. \cite{Baker_etal_2005} who showed that residuals produced by GMRES($n-1$) on a symmetric or skew-symmetric matrix $A \in \mathbb{R}^{n \times n}$ are periodic over two restarts.
Vecharynski \& Langou \cite{Vecharynski_Langou_2010} also show related results concerning periodicity of restarted GMRES residuals.

The analysis in this section was motivated by numerical tests indicating that rAA(1) residuals often tend asymptotically to be four periodic.
More generally, our numerical tests indicate that asymptotically residuals of rAA($m$) are approximately $2(m+1)$-periodic; see some examples in Supplementary Materials Section \ref{SMsec:rAAm}.
This numerical observation is similar to that made by Baker et al. \cite{Baker_etal_2005} that GMRES($m$) residuals often appear to be $2m$-periodic.
In fact, \cref{thm:eigvec} can be generalized to show that linear combinations of $m+1$ orthogonal eigenvectors of $M$ are nonlinear eigenvectors of the matrix characterizing $2(m+1)$ residual iterations of rAA($m$).
However, we do not consider this further here since our focus is on rAA(1).

\subsection{Convergence factor}
\label{sec:symm-conv}

Here we use the results from the previous section to compute the convergence factor of rAA(1).
First we begin with the following theorem which completely characterises the convergence factor of rAA(1) for $M \in \mathbb{R}^{2 \times 2}$.
\begin{theorem}
\label{thm:2x2}
Consider the rAA(1) iteration \eqref{eq:rAA1-two-step-iter-res} applied to a symmetric matrix ${M \in \mathbb{R}^{2 \times 2}}$. 
Let $\bm{r}_0 = c_1 \bm{v}_1 + c_2 \bm{v}_2 $ be the initial residual with $c_1, c_2 \in \mathbb{R}$, and with $\bm{v}_1, \bm{v}_2$ two orthogonal eigenvectors of $M$ with eigenvalues $m_1$ and $m_2$, respectively.
Let $\varrho_{\rm AA}(\bm{r}_0)$ be the $\bm{r}_0$-specific root-convergence factor of the iteration (see \cref{def:conv-fac}).
Suppose that $c_1 = 0$ or $c_2 = 0$. Then,
\begin{align} \label{eq:rAA1-rho-special}
\varrho_{\rm AA}( \bm{r}_0 ) = 0.
\end{align}
Otherwise, suppose that $c_1, c_2 \neq 0$ and let $\varepsilon := c_2 / c_1$, so that $\bm{r}_0 \propto \bm{v}_1 + \varepsilon \bm{v}_2$.
Then,
\begin{align} \label{eq:rAA1-rho-general}
\varrho_{\rm AA}( \bm{r}_0 )
=
\left[
\frac{(m_2 - m_1)^2}{(m_1-1)^2 +  \varepsilon^2 (m_2 - 1)^2}
\,
\frac{(m_1 m_2)^2}{\displaystyle m_1^2 + \frac{m_2^2}{\varepsilon^2}   }
\right]^{1/4}.
\end{align}

\end{theorem}

\begin{figure}[b!]
\centerline{
\includegraphics[width=1\textwidth]{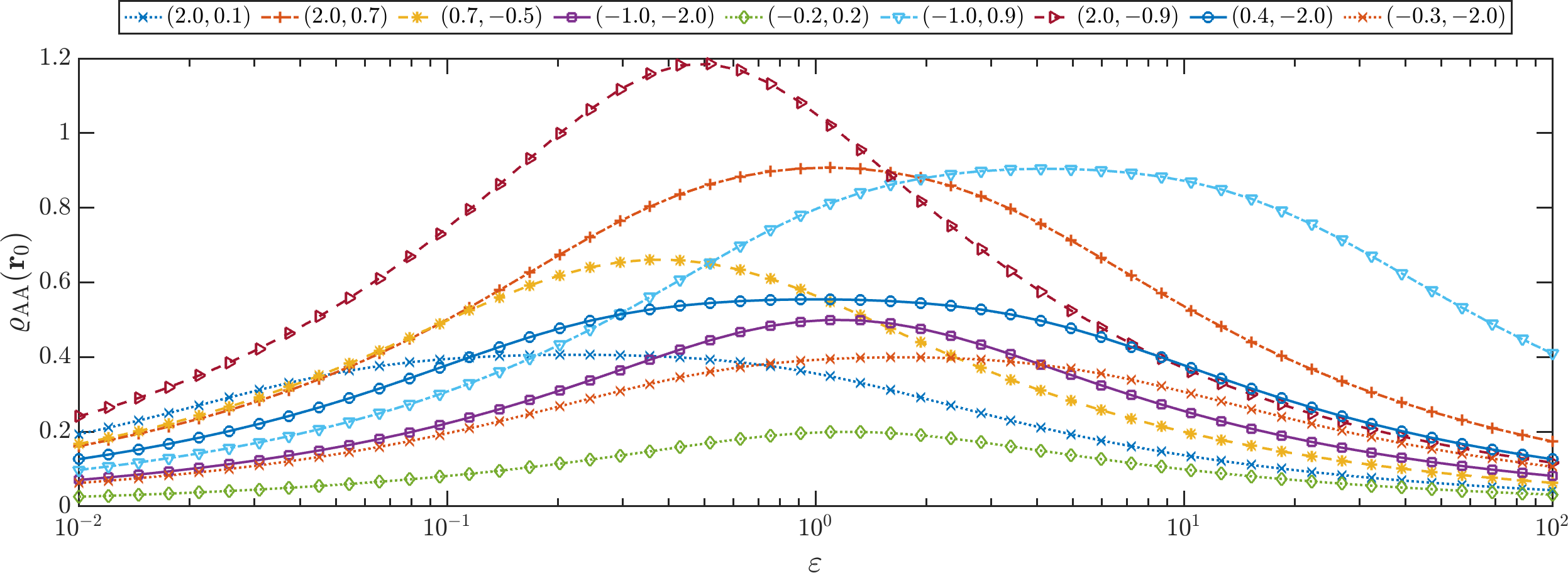}
}
\vspace{1ex}
\centerline{
\includegraphics[width = 0.475\textwidth]{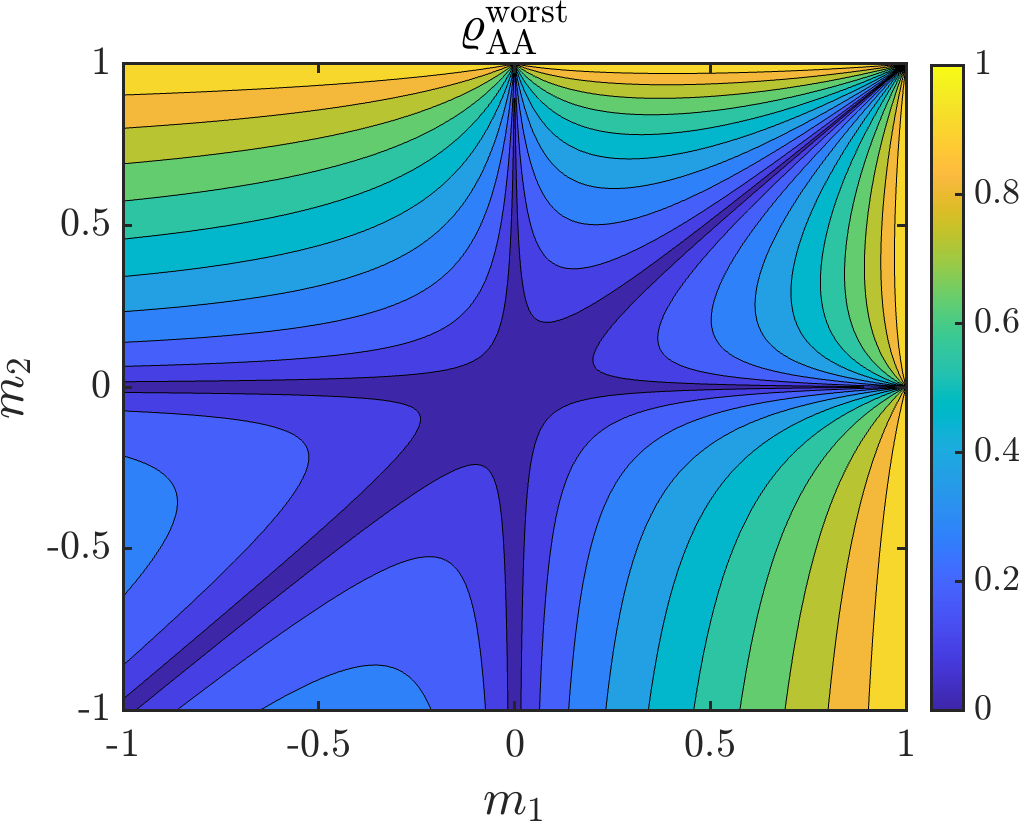}
\hspace{2ex}
\includegraphics[width = 0.475\textwidth]{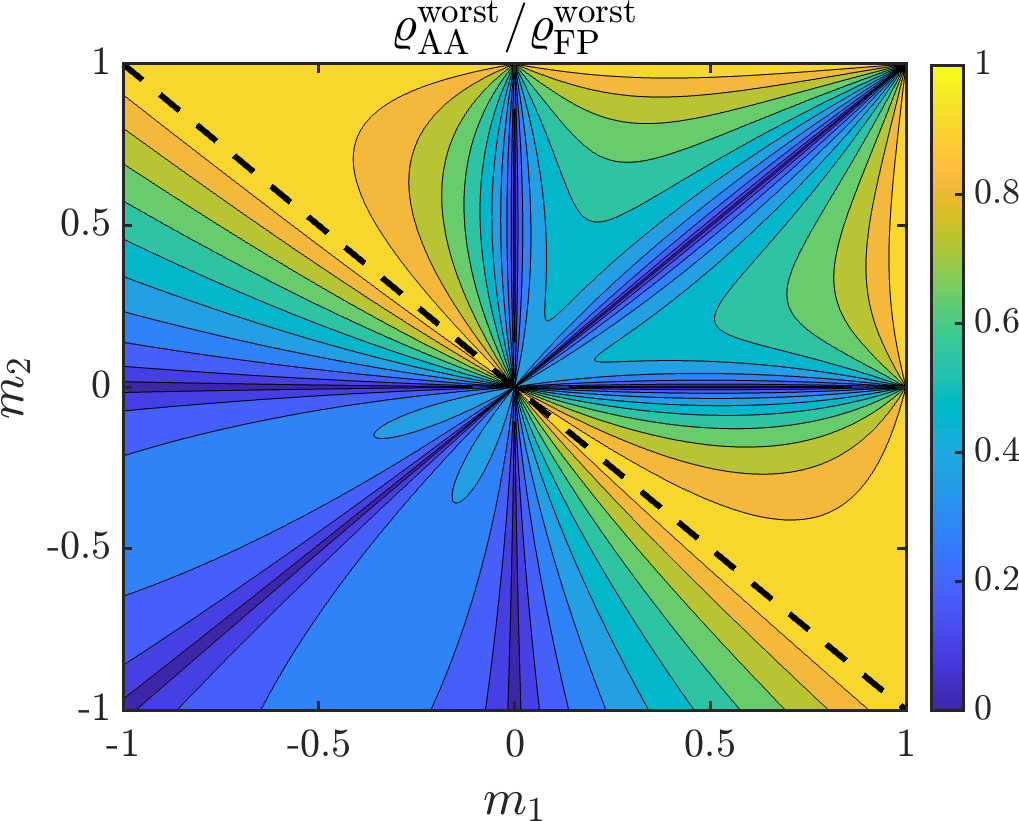}
}
\caption{Convergence of rAA(1) for symmetric $M \in \mathbb{R}^{2 \times 2}$ with eigenvalues $m_1$ and $m_2$.
\textbf{Top:} Cross-sections of the convergence factor \eqref{eq:rAA1-rho-general} as a function of $\varepsilon \in [10^{-2}, 10^2]$ at $(m_1, m_2)$ indicated in the legend.
\textbf{Bottom left:} Worst-case convergence factor for rAA(1) as in  \cref{thm:rho-symm}.
\textbf{Bottom right:} Ratio of the rAA(1) worst-case convergence factor to that of the underlying fixed-point iteration; the dashed black line is the only region where $\varrho_{\rm AA}^{\rm worst} < \varrho_{\rm FP}^{\rm worst}$ does not hold.
\label{fig:symmM-2x2}
}
\end{figure}

\begin{proof}
See \cref{app:symm-2x2-proof}.
\end{proof}
Cross-sections of the convergence factor \eqref{eq:rAA1-rho-general} can be seen in the top panel of \cref{fig:symmM-2x2}.
This result confirms that the rAA(1) asymptotic convergence factor can depend strongly on the initial iterate.
Notice \eqref{eq:rAA1-rho-general} $\to 0$ as $\varepsilon \to 0$ or $\varepsilon \to \infty$, corresponding to $\bm{r}_0 \propto \bm{v}_1$ or $\bm{r}_0 \propto \bm{v}_2$, consistent with \eqref{eq:rAA1-rho-special}.

Next we consider the more general case of the worst-case asymptotic convergence factor for symmetric $M \in \mathbb{R}^{n \times n}$, but first we require the maximum of $\lambda_{ij}(\varepsilon)$.
\begin{lemma} \label{lem:max-lambda}

Let $\lambda_{ij}(\varepsilon)$ be as in \eqref{eq:lambda}.
Then,
\begin{align} \label{eq:lambda-max}
\max \limits_{\varepsilon \in \mathbb{R}} \lambda_{ij}
(\varepsilon)
=
\lambda_{ij}
\left(
\pm \sqrt{\left| \frac{m_j (m_i - 1)}{m_i (m_j - 1)} \right| }
\right)
=
\left( \frac{m_i m_j (m_j - m_i)}{|m_i (m_i - 1)| + |m_j (m_j - 1)|} \right)^2.
\end{align}
\end{lemma}

\begin{proof}
See \cref{app:proof-lem-lambda-max}.
\end{proof}
Our main convergence factor result is now as follows.

\begin{theorem} \label{thm:rho-symm}
Let $\varrho_{\rm AA}^{\rm worst}$ be the worst-case root-convergence factor (see \cref{def:conv-fac}) for the rAA(1) iteration \eqref{eq:rAA1-two-step-iter-res} applied to a symmetric matrix $M \in \mathbb{R}^{n \times n}$ with eigenvalues $\{ m_i \}_{i = 1}^n$, $n \geq 2$.
Then, $\varrho_{\rm AA}^{\rm worst}$ is lower bounded as
\begin{align} \label{eq:symm-rho-worst}
\varrho_{\rm AA}^{\rm worst}
\geq
\max \limits_{\substack{i,j \in \{1, \ldots, n\} \\ i \neq j}} 
\left( \frac{m_i m_j (m_j - m_i)}{|m_i (m_i - 1)| + |m_j (m_j - 1)|} \right)^{1/2}.
\end{align}
Moreover, \eqref{eq:symm-rho-worst} holds with equality when $n = 2$.
\end{theorem}
A plot of this worst-case convergence factor on the space of $2 \times 2$ matrices is shown in the bottom left-hand panel of \cref{fig:symmM-2x2}.
We also note it follows as a consequence of results in \cite[Section 7]{Both-etal-2019} that if $M$ satisfies $\rho(M) < 1$, and $m_{i^*} \neq - m_{j^*}$, with $(i^*, j^*)$ the maximizing indices in \eqref{eq:symm-rho-worst}, then the right-hand side of \eqref{eq:symm-rho-worst} is strictly smaller than $\rho(M)$.

\begin{proof}
Recalling that $\varrho^{\rm worst}_{\rm AA} := \max_{\bm{r}_0} \varrho_{\rm AA}( \bm{r}_0 )$ is the worst-case convergence factor over all non-zero initial residuals, the lower bound \eqref{eq:symm-rho-worst} must hold if there is at least one $\bm{r}_0$ for which $\varrho_{\rm AA}( \bm{r}_0 )$ is equal to the right-hand side of \eqref{eq:symm-rho-worst}.
To this end, let us consider the specific initial rAA(1) residual given by $\bm{r}_0 = \bm{z}_{i^*, j^*}(\varepsilon^*)$, where $(i^*, j^*, \varepsilon^*)$ is the $(i, j, \varepsilon)$ triplet maximizing the left-hand side of \eqref{eq:lambda-max}, and $\bm{z}_{ij}(\varepsilon)$ is defined in \cref{thm:eigvec}.
Then, according to \cref{thm:eigvec}, since $\bm{r}_0$ is a linear combination of two eigenvectors of $M$, every fourth rAA(1) residual will be periodic, $\bm{r}_{4(k+1)} = \lambda_{i^*, j^*}(\varepsilon^*) \bm{r}_{4k} = \big[ \lambda_{i^*, j^*}(\varepsilon^*) \big]^k \bm{r}_{0}$. 
As such, we have for all $k = 0, 1, \ldots,$
\begin{align}
\frac{\Vert \bm{r}_{4(k+1)} \Vert }{\Vert \bm{r}_{4k} \Vert}
&=
\frac{\Vert X (\bm{r}_{4k}) \bm{r}_{4k} \Vert }{\Vert \bm{r}_{4k} \Vert}
=
\lambda_{i^*, j^*}(\varepsilon^*)
\leq
\max \limits_{\bm{v} \neq \bm{0}} \frac{\Vert X ( \bm{v} ) \bm{v} \Vert}{\Vert \bm{v} \Vert}.
\end{align}
Plugging into the definition in \cref{def:conv-fac} and then applying the same line of arguments as used in the proof in \cref{app:symm-2x2-proof} gives the claim.
\end{proof}

Another interesting result is as follows. By the same argument used in the proof in \cref{app:symm-2x2-proof}, initializing rAA(1) with a vector satisfying ${\bm{r}_0 \propto \bm{v}_{i^*} + \varepsilon \bm{v}_{j^*}}$, with $i^*, j^*$ being the maximizing indices on the right-hand side of \eqref{eq:symm-rho-worst}, results in the asymptotic convergence factor (see \cref{def:conv-fac}) $\varrho_{\rm AA}( \bm{r}_0 ) =  [\lambda_{i^*, j^*}(\varepsilon)]^{1/4}$. Recall that $[\lambda_{i^*, j^*}(\varepsilon)]^{1/4}$ is a continuous function of $\varepsilon \in (- \infty, \infty)$, taking on all values between $0$ and the right-hand side of \eqref{eq:symm-rho-worst}. 
It therefore follows that the asymptotic convergence factor $\varrho_{\rm AA}( \bm{r}_0 )$ of rAA(1) can be any real number between zero and the right-hand side of \eqref{eq:symm-rho-worst}.

Based on numerical evidence (described below), we suspect that \cref{thm:rho-symm} can be strengthened so that the lower bound in \eqref{eq:symm-rho-worst} in fact holds with equality for all $n$ and not only $n = 2$. 
More specifically, this strengthening follows as a consequence of the following conjecture which we believe to be true:
\begin{conjecture} \label{conj:symm-Xvmax}
Let $X$ be the symmetric, vector-dependent matrix in \eqref{eq:X-def} and $\lambda_{ij}(\varepsilon)$ be as in \eqref{eq:lambda}. 
Then,
\begin{align} \label{eq:symm-max}
\max \limits_{\bm{v} \neq \bm{0}} \frac{\Vert X ( \bm{v} ) \bm{v} \Vert}{\Vert \bm{v} \Vert} = \max \limits_{\substack{i,j \in \{1, \ldots, n\} \\ i \neq j}} \max \limits_{\varepsilon \in \mathbb{R}} \lambda_{ij}( \varepsilon ).
\end{align}
\end{conjecture}
Unfortunately, we have not been able to prove this conjecture for $n > 2$, but note that it holds trivially when $n = 2$ because all vectors in $\mathbb{R}^2$ can be written as a linear combination of two eigenvectors of $M$, and, thus, all vectors in $\mathbb{R}^2$ are nonlinear eigenvectors of $X(\bm{v})$ as per \cref{thm:eigvec}.
We provide numerical evidence to support the conjecture in the following two numerical results sections and in Supplementary Materials Section \ref{SMsec:conj-num-res} .
Note that \cref{conj:symm-Xvmax} is a generalization of the result that given a symmetric matrix $B$, $\max_{\bm{v} \neq \bm{0}} \Vert B \bm{v} \Vert / \Vert \bm{v} \Vert = \rho(B)$ with the max attained by the $\bm{v}$ that is the dominant eigenvector of $B$.
%

\subsection{Numerical results}
\label{sec:symm-num-res}
\begin{figure}[h!]
\centerline{
\includegraphics[width=0.475\textwidth]{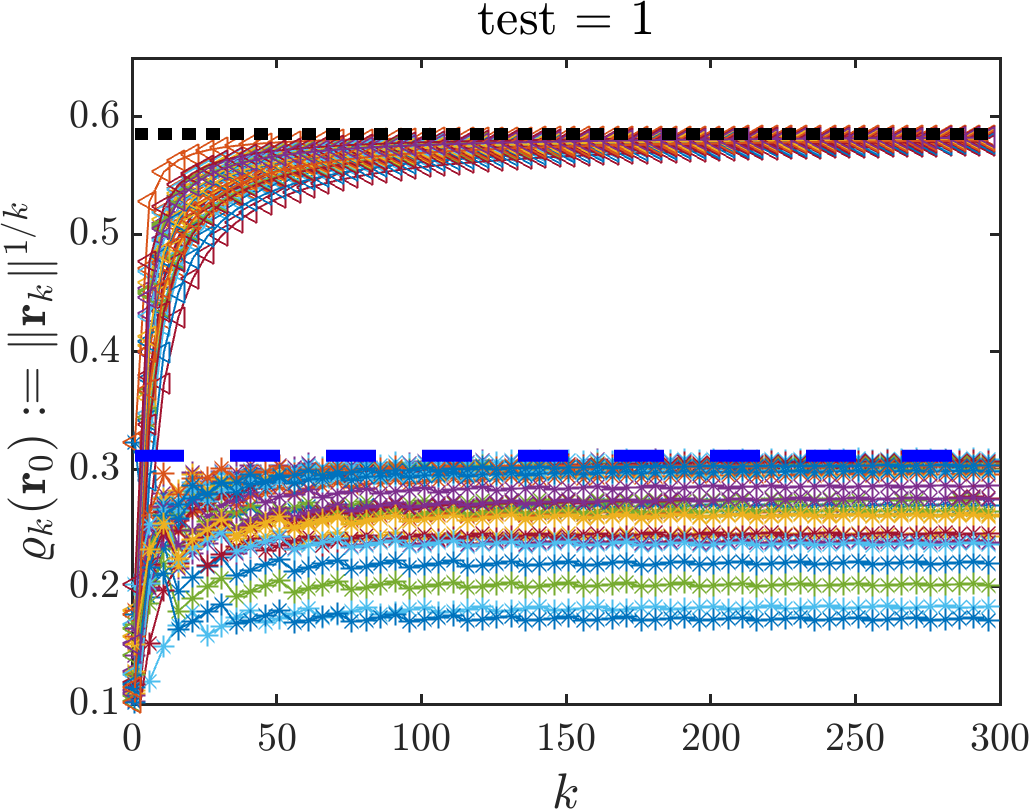}
\hspace{1ex}
\includegraphics[width=0.475\textwidth]{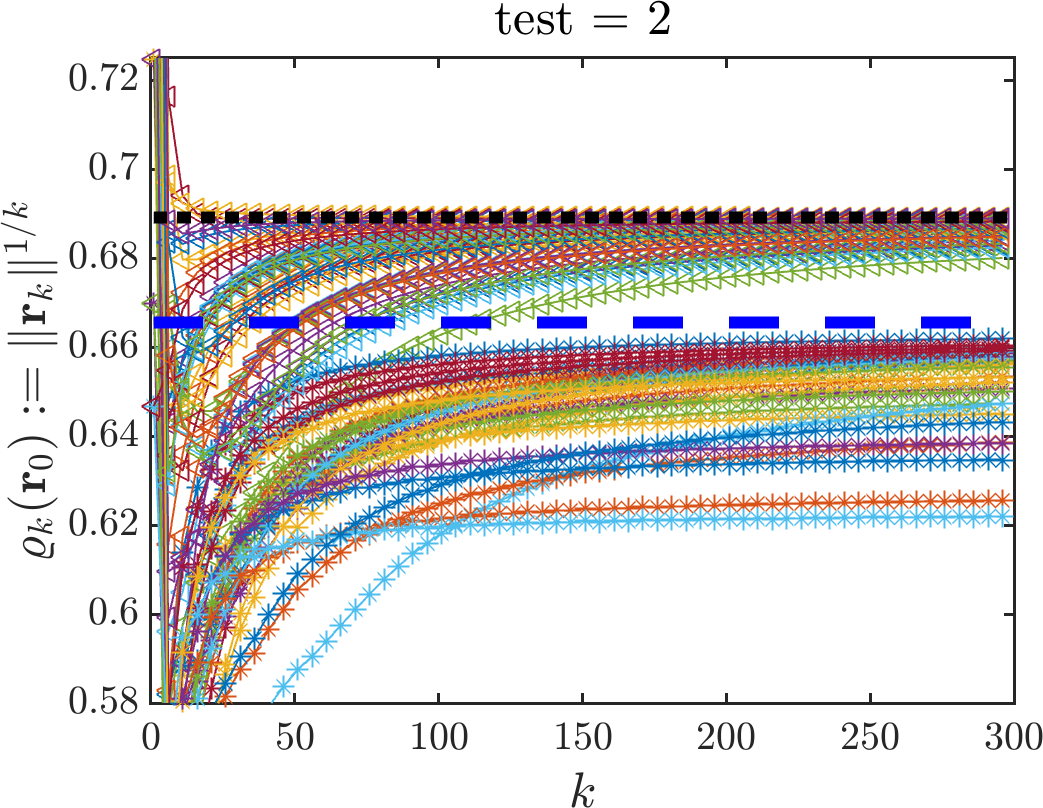}
}
\vspace{1ex}
\centerline{
\hspace{-1.5ex}
\includegraphics[width=0.475\textwidth]{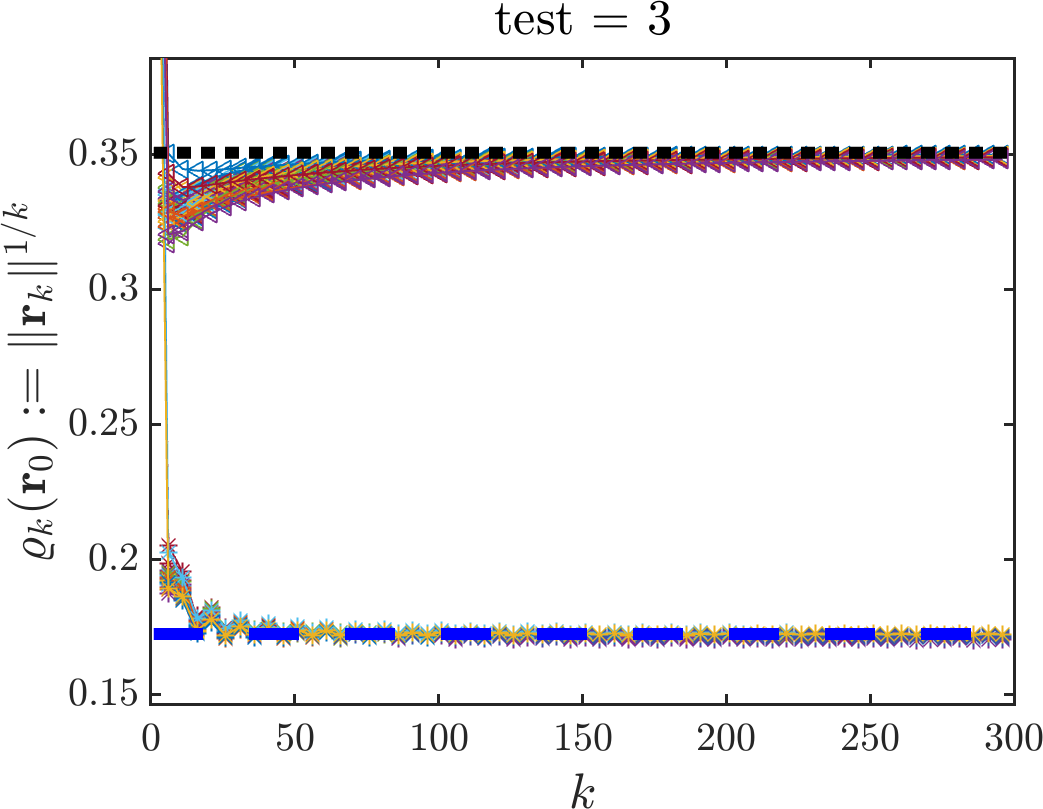}
\hspace{1.5ex}
\includegraphics[width=0.46\textwidth]{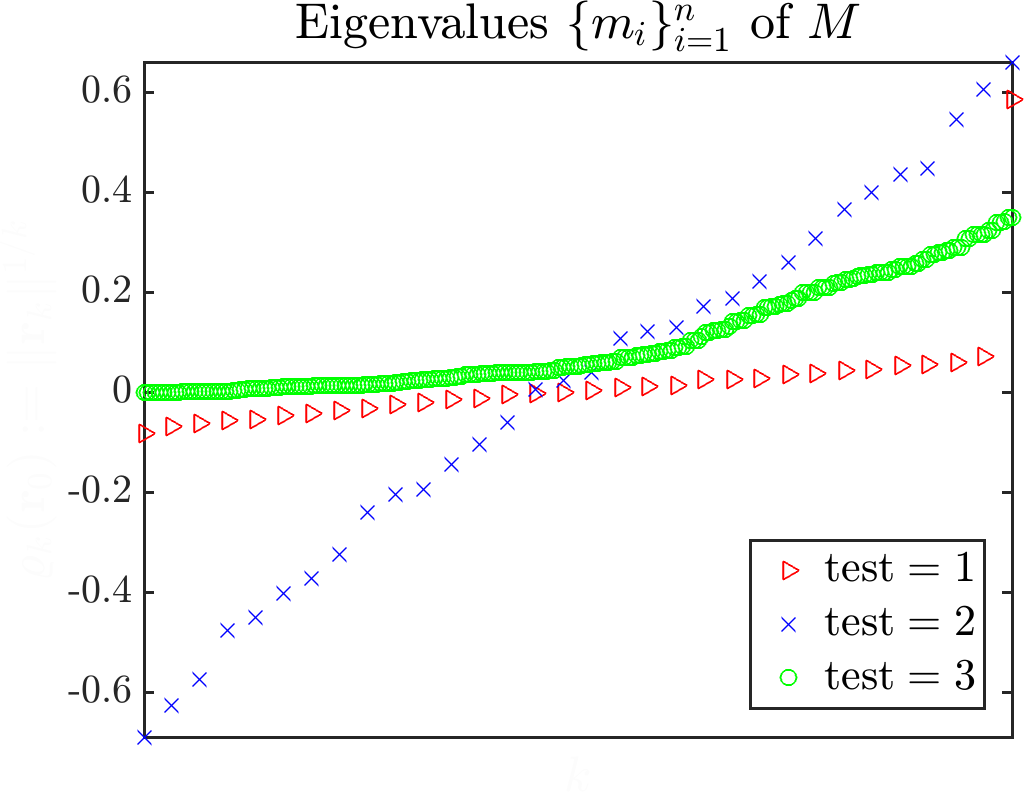}
}
\caption{Supporting numerical evidence for \cref{thm:rho-symm} using three symmetric matrices $M$. 
For each $M$, shown is the convergence factor $\varrho_k(\mathbf{r}_0)$ as a function of $k$ (see \cref{def:conv-fac}).
Triangle markers depict the underlying fixed-point iteration \eqref{eq:PI-iter}, and asterisk markers the rAA(1) iteration \eqref{eq:rAA1-iter}.
For each matrix, each algorithm is initialized with 30 different $\bm{r}_0$ chosen at random.
The thick black dotted line is the worst-case convergence factor for the underlying fixed-point iteration, and the thick blue dashed line is the lower bound for that of the rAA(1) iteration given in \cref{thm:rho-symm}.
The bottom right plot shows the eigenvalues for each test matrix $M$.
\label{fig:symm-rhok}
}
\end{figure}

We now consider three numerical tests that support both \cref{thm:rho-symm} and \cref{conj:symm-Xvmax}, with results shown in \cref{fig:symm-rhok}.
In the first two tests we take symmetric matrices $M \in \mathbb{R}^{32 \times 32}$ with randomly chosen entries.
From the bottom right of \cref{fig:symm-rhok}, observe that in the first test all eigenvalues of $M$ are clustered relatively tightly with a single outlier eigenvalue, while in the second test eigenvalues of $M$ are not particularly clustered.

For the third test we consider a more practical example: Using rAA(1) to accelerate a two-level geometric multigrid algorithm for solving the discretized Poisson problem $-(u_{xx} + u_{yy}) = f(x,y)$ for unknown $u$ subject to zero Dirichlet boundary conditions and function $f$ specified below; see \cite{Trottenberg2001,Briggs2000}.
Discretizing this partial differential equation (PDE) results in the linear system $A_h \bm{u} = \bm{f}$ with the symmetric positive definite (SPD) matrix $A_h$ the standard five-point finite-difference discretization of the Laplacian operator $-[\partial_{xx}(\, \cdot \,) + \partial_{yy}(\, \cdot \,)]$ on a grid with $n_x-1$ unknowns in each direction equispaced by distance $h$.
The vector $\bm{f}$ is the discretization of $f(x,y)$.
Applying a two-level method to solve this linear system yields the following fixed-point iteration:
\begin{align} \label{eq:symmM-mg}
\bm{u}_{k+1}
=
S K S
\bm{u}_{k}
+
B \bm{f},
\quad
K = I - \tfrac{1}{4} P A_{2h}^{-1} P^\top A_h, 
\quad
k = 0, 1, \ldots
\end{align}
Here $S$ is the (symmetric) iteration matrix for the weighted Jacobi method with weight $4/5$, $P$ is bilinear interpolation, and $A_{2h}$ is the analogue of $A_h$ on a grid with $n_x/2-1$ points equispaced by $2h$.
In our numerical example we choose $n_x = 16$, so that the total number of unknowns is $n = (n_x - 1)^2 = 225$.
The matrix $B$ is a certain function of the other matrices involved in iteration \eqref{eq:symmM-mg}.
Notice that the fixed-point iteration \eqref{eq:symmM-mg} is exactly of the form of \eqref{eq:PI-iter}, with solution $\bm{u} = \bm{x}$, forcing term $\bm{b} = B \bm{f}$, and symmetric matrix $M = S K S$.\footnote{Technically, $M$ in \eqref{eq:symmM-mg} is not symmetric because $P A_{2h}^{-1} P^\top$ does not commute with $A_h$. Since $A_h$ is SPD, consider that $M$ is similar to the matrix $\wt{M} = A_h^{1/2} M A_h^{-1/2}$, which, is in fact symmetric (see \cite{Tang-etal-2010} for related ideas), noting that $A_h^{\pm 1/2}$ commute with $S$. 
Since $M$ is equivalent to a symmetric matrix under similarity transform, we anticipate that \eqref{eq:symm-rho-worst} evaluated at the eigenvalues of $M$ will reliably compute the convergence factor of the iteration, with our numerical tests confirming as much.}
Numerical results for the underlying fixed-point iteration \eqref{eq:symmM-mg} and the associated rAA(1) iteration \eqref{eq:rAA1-iter} are shown in the bottom left of \cref{fig:symm-rhok}. In all cases the initial iterates $\bm{u}_0$ are chosen with random entries from the interval $[-1, 1]$, and we take $\bm{f} = \bm{0}$ (corresponding to $f(x,y) = 0$ in the underlying PDE).

The convergence factor plots in \cref{fig:symm-rhok} appear consistent with \cref{thm:rho-symm} since the lower bound \eqref{eq:symm-rho-worst} on the worst-case convergence factor of rAA(1) is realized for all three test matrices.
Moreover, since this lower bound is reached but never exceeded, these tests indicate that the lower bound holds with equality, which is also consistent with \cref{conj:symm-Xvmax} holding.
Moving on, observe that the asymptotic convergence factor of the underlying fixed-point iteration does not depend on the initial residual $\bm{r}_0$, while that for rAA(1) \textit{may} depend strongly on $\bm{r}_0$.
That is, for the first and second tests we see a wide-ranging set of rAA(1) convergence factors for different $\bm{r}_0$, while for the third test we see that all $\bm{r}_0$ result in essentially the same convergence factor.
It is also interesting to consider that in the first and third tests rAA(1) provides significant acceleration while in the second test it provides relatively little.
%

\subsection{Numerical results: A nonlinear extension}
\label{sec:nonlin}

To further demonstrate the predictive power of our theoretical results, in this section we present a numerical test based on relaxing assumption A1 from \cref{sec:ass}: That the underlying iteration function $\bm{q}( \bm{x} )$ is affine. Instead, let us just suppose that $\bm{q}$ has a fixed point at $\bm{x}_*$ and that it is differentiable there.
Heuristically, when the AA iterates $\bm{x}_k$ are sufficiently close to the fixed point $\bm{x}_*$, $\bm{q}(\bm{x}_k)$ will essentially behave as an affine function of $\bm{x}_k$, and thus fall into the class of functions we have already analyzed.
That is, by Taylor expansion we have
$
\bm{q}(\bm{x}_k) 
= 
\bm{q}( \bm{x}_* + [\bm{x}_k - \bm{x}_*]) 
= 
\bm{q}(\bm{x}_*) + \bm{q}'(\bm{x}_*)  [\bm{x}_k - \bm{x}_*] + {\cal O} (\Vert \bm{x}_k - \bm{x}_* \Vert^2)
=
\bm{q}'(\bm{x}_*) \bm{x}_k
+
[I -\bm{q}'(\bm{x}_*) ] \bm{x}_* + {\cal O} (\Vert \bm{x}_k - \bm{x}_* \Vert^2)
\approx
M_* \bm{x}_k + \bm{b}_*,
$
where $M_* := \bm{q}'(\bm{x}_*)$ and $\bm{b}_* := (I - M_*) \bm{x}_*$.
As such, it seems reasonable to expect that when sufficiently close to the fixed point $\bm{x}_*$ of $\bm{q}$, rAA(1) will essentially converge as if it were applied to the affine iteration function $\bm{q}( \bm{x} ) = M_* \bm{x} + \bm{b}_*$.
Based on the tests in the previous section, we therefore anticipate a worst-case convergence factor of $
\max_{i,j \in \{1, \ldots, n\}, i \neq j} 
\left( \frac{m_i m_j (m_j - m_i)}{|m_i (m_i - 1)| + |m_j (m_j - 1)|} \right)^{1/2}
$,
with $\{ m_i \}$ the eigenvalues of $M_*$.
We note that a conjecture similar in spirit relating the convergence of linear and nonlinear windowed AA and GMRES is given in \cite[Conjecture 5.1]{DeSterck-He-2021-stationary-AA}.
Furthermore, Garner et al. \cite{Garner-etal-2023} proved a result analogous to that which we suggest above, wherein they showed that their AA$(m)$ convergence factor bound for $\bm{q}(\bm{x}) = M \bm{x} + \bm{b}$ with $M$ a symmetric matrix extends to nonlinear functions $\bm{q}$ with Jacobian symmetric at the fixed point.

\begin{figure}[b!]
\centerline{
\includegraphics[width=0.475\textwidth]{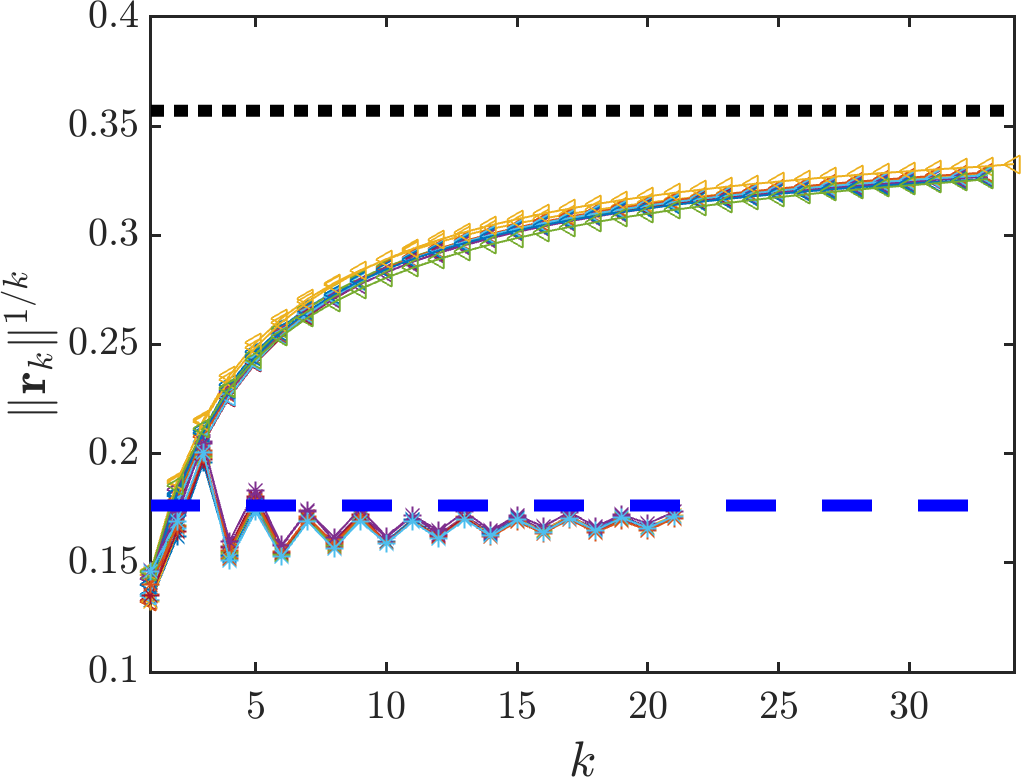}
\hspace{1ex}
\includegraphics[width=0.475\textwidth]{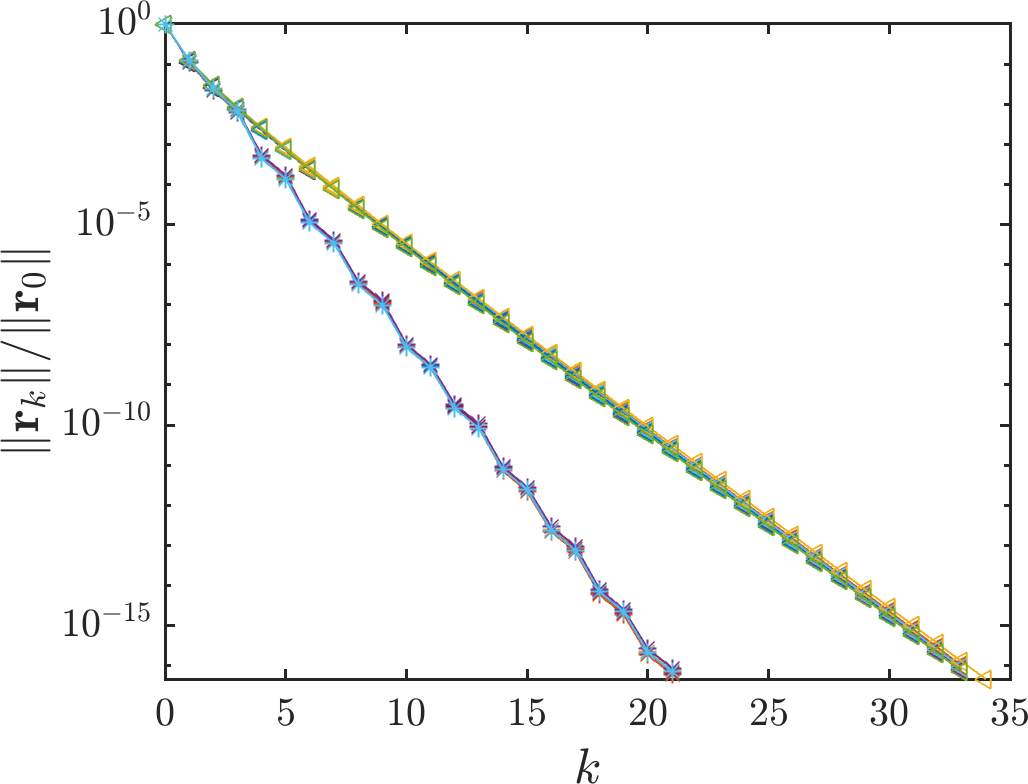}
}
\caption{Supporting numerical evidence for the extension of results from the linear to nonlinear setting by solving a nonlinear elliptic boundary value problem with an inexact Newton algorithm.
Triangle markers correspond to the underlying fixed-point iteration \eqref{eq:nonlin-q-MG}, and asterisk markers to the associated rAA(1) iteration.
\textbf{Left:} Numerically measured root-convergence factor.
The thick broken lines reflect the theoretically predicted worst-case convergence factor for each method.
\textbf{Right:} Relative reduction in residual as a function of iteration.
\label{fig:nonlin-MG}
}
\end{figure}

Given rAA(1)'s success at accelerating the two-grid algorithm in \cref{sec:symm-num-res} here we consider a nonlinear generalization of this problem.
Specifically, we solve the nonlinear elliptic boundary value problem $-(u_{xx} + u_{yy} ) + \gamma u \exp( u ) = f(x,y)$ for unknown $u$ subject to zero Dirichlet boundary conditions and function $f$ specified below.
To discretize this problem we use standard second-order accurate finite differences on a grid of $n_x - 1$ interior points in each direction equispaced by a distance $h$, resulting in a total of $n = (n_x - 1)^2$ unknowns.
Denoting the discrete solution vector as $\bm{u} \in \mathbb{R}^n$, the nonlinear algebraic system to be solved is written as $\bm{F}(\bm{u}) = \bm{0} = {\cal A}( \bm{u} ) - \bm{f}$, where $\bm{f}$ is the discretization of $f(x, y)$, and ${\cal A}$ that of the operator $-[\partial_{xx} (\, \cdot \,) + \partial_{yy} (\, \cdot \,) ] + \gamma (\, \cdot \,) \exp(\, \cdot \,)$.
To solve this algebraic system we employ an inexact Newton method, which, given some starting iterate $\bm{u}_0 \in \mathbb{R}^n$, takes the form 
\begin{align} \label{eq:nonlin-q-MG}
\bm{u}_{k+1} = \bm{q}( \bm{u}_k ) = \bm{u}_k - T(\bm{u}_k) \bm{F}( \bm{u}_k ), 
\quad
k = 0, 1, \ldots,
\end{align}
where $T(\bm{u}_k) \approx [\bm{F}'(\bm{u}_k)]^{-1} \in \mathbb{R}^{n \times n}$ is an approximation to the inverse of the Jacobian of $\bm{F}$ at the point $\bm{u}_k$.
Here we take $T(\bm{u}_k)$ as a single two-grid iteration applied to the Jacobian matrix $\bm{F}'(\bm{u}_k)$.
The details of the two-grid method are the same as those used for the linear problem in \cref{sec:symm-num-res} except that the coarse-grid matrix is taken as the Galerkin operator.
See \cite[Chapter 6]{Briggs2000} for further details.

Numerical results are shown in \cref{fig:nonlin-MG}, where we set ${\gamma = 5}$, and ${n_x = 32}$ so that ${n = 961}$.
The discretized right-hand side vector $\bm{f}$ is chosen to have random entries in $[-1, 1]$.
The underlying fixed-point iteration \eqref{eq:nonlin-q-MG} and the associated rAA(1) iteration are run for 10 different starting iterates $\bm{u}_0$, all of which are chosen to have random entries from the interval $[-0.1, 0.1]$.
From the left panel of \cref{fig:nonlin-MG}, the rAA(1) convergence factor does not appear to depend strongly on the initial iterate $\bm{u}_0$, similar to the linear two-grid problem pictured in \cref{fig:symm-rhok}.
Most significantly, we do see that the numerically measured root-convergence factors for both the underlying fixed-point and rAA(1) iterations trend towards their theoretically expected values of $\rho( \bm{q}'(\bm{u}_*) )$, and $
\max_{i,j \in \{1, \ldots, n\}, i \neq j} 
\left( \frac{m_i m_j (m_j - m_i)}{|m_i (m_i - 1)| + |m_j (m_j - 1)|} \right)^{1/2}
$ evaluated at eigenvalues of $\bm{q}'(\bm{u}_*)$, respectively.\footnote{This Jacobian is not symmetric due to non-symmetry of the two-grid matrix $T(\bm{u}_k)$. As for the two-grid iteration matrix considered in \cref{sec:symm-num-res}, this matrix is similar to a symmetric matrix.} Notice the convergence factors do not quite reach a constant before the halting tolerance is reached.
To evaluate these theoretical convergence factors we approximate the matrix $\bm{q}'( \bm{u}_* )$ using a basic finite difference, with the $\ell$th column being approximated as $\big[\bm{q}( \bm{u}_* + \mu \wt{\bm{e}}_{\ell} ) - \bm{q}(\bm{u}_*) \big]/\mu$ where $\mu = 10^{-6}$, and $\wt{\bm{e}}_{\ell}$ is the $\ell$th canonical basis vector.
The right panel of \cref{fig:nonlin-MG} emphasizes the impact of the acceleration provided by rAA(1).

\section{Convergence of rAA(1) for skew symmetric $M$}
\label{sec:skew}

In this section we analyze the residual iteration \eqref{eq:rAA1-two-step-iter-res} under the  assumption that the matrix $M \in \mathbb{R}^{n \times n} $ is skew symmetric: $M^\top = - M$. 
Preliminaries are discussed next in \cref{sec:skew-intro}, with convergence factors computed in \cref{sec:skew-conv}. Numerical results are given in \cref{sec:skew-num-res}, and dependence on the initial iterate is investigated in \cref{sec:skew-r0}.

\subsection{Preliminaries}
\label{sec:skew-intro}

For simplicity of our calculations, let us assume that $n$ is even throughout this section, recalling then that eigenvalues of an even-dimensional skew-symmetric matrix $M \in \mathbb{R}^{n \times n}$ are imaginary and occur in conjugate pairs.
Denote eigenvalues of $M$ as $\{ m_j, \bar{m}_j  \}_{j = 1}^{n/2} = \{ \i |m_j|, -\i |m_j| \}_{j = 1}^{n/2}$ with $\i$ the imaginary unit.
Suppose the eigenvalues are ordered increasingly by magnitude so that $|m_{\min}|:=|m_1| \leq |m_2| \leq \cdots \leq |m_{n/2}| := |m_{\max}| = \rho(M)$.
Let $M = U T U^\top$ be the real Schur decomposition of $M$ with $T \in \mathbb{R}^{n \times n} $ the block diagonal matrix\footnote{If $ U T U^\top$ is the real Schur decomposition for some general matrix $B$, then $T$ would be block upper triangular in general, and only if $B$ is normal would $T$ truncate to be block diagonal.}
\begin{align}
T
=
\begin{bmatrix}
T_1 \\ & \ddots \\ & & T_{n/2}
\end{bmatrix},
\quad
T_{j} 
= 
\begin{bmatrix}
0 & -|m_j| \\
|m_j| & 0
\end{bmatrix}.
\end{align}
The matrix $U \in \mathbb{R}^{n \times n}$ is orthogonal, and is blocked as
\begin{align} \label{eq:Uj-def}
U = [ U_1, \ldots, U_{n/2} ], 
\quad
U_j = [ \bm{u}_j, \, \wt{\bm{u}}_j ] \in \mathbb{R}^{n \times 2}.
\end{align}
Note that $M$ is invariant on the subspaces $\mathrm{span} \{ U_j \}$, $j = 1, \ldots, n/2$, and in particular,
\begin{align} \label{eq:skew-M-invar}
M \big[ \bm{u}_j, \, \wt{\bm{u}}_j \big]  
= 
|m_j| \big[ \wt{\bm{u}}_j , \,  -\bm{u}_j,  \big],
\quad
\mathrm{and}
\quad
M^2 \big[ \bm{u}_j, \, \wt{\bm{u}}_j \big]  
= 
-|m_j|^2 \big[ \bm{u}_j, \, \wt{\bm{u}}_j \big].
\end{align}

From \eqref{eq:alpha-def} recall the coefficient 
\begin{align} \label{eq:alpha-skew}
\alpha( \bm{z} )  
=
\frac{\la A\bm{z}, \bm{z} \ra}{\la A\bm{z}, A\bm{z} \ra }
=
\frac{\la (A\bm{z}), A^{-1} (A\bm{z}) \ra}{\la (A\bm{z}), (A\bm{z}) \ra }
=
\frac{\la (A\bm{z}), Y (A\bm{z}) \ra}{\la (A\bm{z}), (A\bm{z}) \ra },
\end{align}
$Y := \tfrac{1}{2}( A^{-1} + A^{-\top} )$.
Observe $(I - M)(I + M) Y 
= 
\tfrac{1}{2} (I - M)(I + M)\big[ (I - M)^{-1} + (I + M)^{-1} \big] 
=
\tfrac{1}{2} \big[ (I + M) + (I - M) \big] 
= I$.
Hence, $Y = [(I - M)(I + M)]^{-1} = (I - M^2)^{-1} = (I + M M^{\top})^{-1}$.
Since $M M^{\top}$ is symmetric positive semi-definite, $Y$ is SPD, with eigenvalues $\{( 1 + |m_j|^2 )^{-1} \}_{j = 1}^{n/2}$ each of algebraic multiplicity two.
Since $Y$ and $Y^{-1}$ share eigenvectors, the eigenvectors of $Y$ are those of $M^2$. Hence, from \eqref{eq:skew-M-invar} observe that any $\bm{z} \in \mathrm{span}( U_{j} )$ is an eigenvector of $M^2$, and thus of $Y$ with corresponding eigenvalue $( 1 + |m_j|^2 )^{-1}$.

\begin{lemma} \label{lem:skew-alpha-properties}

Suppose $M$ is skew symmetric, $\bm{0} \neq \bm{z} \in \mathrm{span} \{ U_j \}$, with $U_j$ as in \eqref{eq:Uj-def}, and that $\alpha$ is as in \eqref{eq:alpha-skew}.
Then,
\begin{align} \label{eq:skew-alpha-special}
\alpha( \bm{z} ) = \frac{1}{1 + |m_j|^2}.
\end{align}
Otherwise, for $\bm{0} \neq \bm{v} \in \mathbb{R}^{n}$ arbitrary we have
\begin{align} \label{eq:skew-alpha-interval}
\alpha( \bm{v} ) 
\in 
\left[ \frac{1}{1 + |m_{\max}|^2}, 
\frac{1}{1 + |m_{\min}|^2}
\right]
\subseteq (0, 1].
\end{align}

\end{lemma}

\begin{proof}
From the right-most expression in \eqref{eq:alpha-skew} if $A \bm{z}$ is an eigenvector of $Y$, then $\alpha( \bm{z} )$ is the corresponding eigenvalue of $Y$.
If $\bm{z} \in \mathrm{span} \{ U_j \}$, then $A \bm{z} \in \mathrm{span} \{ U_j \}$ since $A = I - M$, and, hence, by the above discussion regarding eigenvectors and eigenvalues of $Y$, $\alpha( \bm{z} ) = (1 + |m_j|^2)^{-1}$.
Consider \eqref{eq:skew-alpha-interval}: Since $A$ is full rank, the range of the Rayleigh quotient between $Y$ and $A \bm{v}$ is $[\lambda_{\min}( Y ), \lambda_{\max}( Y ) ]$, and it was just shown that the eigenvalues of $Y$ are $\{( 1 + |m_j|^2 )^{-1} \}_{j = 1}^{n/2}$.
\end{proof}

\subsection{Convergence factor}
\label{sec:skew-conv}

In \cref{cor:skew-worst-case} below we establish the worst-case convergence factor, but first we require the results in \cref{lem:skew-special-case,lem:skew-upper-bound}.

\begin{lemma}
\label{lem:skew-special-case}
Suppose $M$ is skew symmetric, that $\bm{0} \neq \bm{z} \in \mathrm{span} \{ U_j \} $ with $U_j$ as in \eqref{eq:Uj-def}, and that ${\cal R}$ is the residual propagator in \eqref{eq:calR-def}.
Then, 
\begin{align}
{\cal R}( \bm{z} ) \in \mathrm{span} \{ U_j \},
\quad
\textrm{and}
\quad
\frac{\Vert {\cal R}( \bm{z} ) \Vert}{\Vert \bm{z} \Vert}
= 
\frac{|m_j|^2}{\sqrt{1 + |m_j|^2}}.
\end{align}

\end{lemma}

\begin{proof}
Write $\alpha \equiv \alpha( \bm{z} )$. We have,
${\cal R} ( \bm{z} ) = M( I - \alpha A ) \bm{z} = (1 - \alpha) M \bm{z} + \alpha M^2 \bm{z} $.
Since $U_j$ is an invariant subspace of $M$, it is of the polynomial $(1 - \alpha)M + \alpha M^2$, and, thus, ${\cal R}( \bm{z} ) \in \mathrm{span} \{ U_j \}$. \\
Now consider $\Vert {\cal R} ( \bm{z} ) \Vert^2$. 
Recalling from \eqref{eq:skew-M-invar} that $M^2 \bm{z} = - |m_j|^2 \bm{z}$ for any $\bm{z} \in \mathrm{span}(U_j)$, we have
\begin{align}
\Vert {\cal R} ( \bm{z} ) \Vert^2 
&= 
\la (1 - \alpha) M \bm{z} - \alpha |m_j|^2 \bm{z}, (1 - \alpha) M \bm{z} - \alpha |m_j|^2 \bm{z} \ra,
\\
&= 
(1 - \alpha)^2 \la M\bm{z}, M\bm{z} \ra + \alpha^2 |m_j|^4 \la \bm{z}, \bm{z} \ra,
\end{align}
with the second equality following since $\bm{z}^\top M \bm{z} = 0$ for any vector $\bm{z}$.
Simplifying we find $ \Vert {\cal R} ( \bm{z} ) \Vert^2 = |m_j|^2 \big[ \alpha^2 |m_j|^2 + (1 - \alpha)^2 \big] \Vert \bm{z} \Vert^2$. 
The claim then follows by substituting in $\alpha = 1/( 1 + |m_j|^2 )$, as per \eqref{eq:skew-alpha-special}, and simplifying.
\end{proof}

\begin{lemma}
\label{lem:skew-upper-bound}

Suppose $M$ is skew symmetric, and that ${\cal R}$ is the residual propagator in \eqref{eq:calR-def}.
Then,
\begin{align} \label{eq:skew-upper-bound}
\max \limits_{  \bm{v} \neq \bm{0} }
\frac{\Vert {\cal R}( \bm{v} ) \Vert}{\Vert \bm{v} \Vert}
\leq
\frac{|m_{\max}|^2}{\sqrt{1 + |m_{\max}|^2}}.
\end{align}

\end{lemma}

\begin{proof}
For any $\bm{v} \neq \bm{0}$ we have
\begin{align}
\Vert {\cal R}( \bm{v} ) \Vert^2
=
\Vert M (I - \alpha( \bm{v} ) A ) \bm{v} \Vert^2
\leq
|m_{\max}|^2 \Vert (I - \alpha( \bm{v} ) A ) \bm{v} \Vert^2.
\end{align}
Since $ A \bm{v} \perp (I - \alpha( \bm{v} ) A ) \bm{v}$ we have
\begin{align}
\Vert (I - \alpha( \bm{v} ) A ) \bm{v} \Vert^2
=
\la 
\bm{v},
(I - \alpha( \bm{v} ) A ) \bm{v}
\ra
=
\Vert \bm{v} \Vert^2 - \alpha( \bm{v} ) \la \bm{v}, A \bm{v} \ra
=
\Vert \bm{v} \Vert^2 ( 1 - \alpha(\bm{v}) )
\end{align}
noting that the symmetric part of $A = I - M$ is $I$.
From \cref{lem:skew-alpha-properties} recall that $\alpha( \bm{v} ) \in (0, 1]$ for all $\bm{v} \neq \bm{0}$.
Hence, for any $\bm{v} \neq \bm{0}$ we have
\begin{align}
\frac{\Vert {\cal R}( \bm{v} ) \Vert^2}{\Vert \bm{v} \Vert^2}
\leq
| m_{\max} |^2 \Big( 1 - \min_{\bm{w} \neq \bm{0}} \alpha(\bm{w}) \Big).
\end{align}
Plugging in $\min_{\bm{w}} \alpha(\bm{w}) = \frac{1}{1 + |m_{\max}|^2}$ from \cref{lem:skew-alpha-properties} and simplifying gives the claim.
\end{proof}

\begin{corollary}
Let $\varrho^{\rm worst}_{\rm FP}$ be the worst-case root-average convergence factor (see \cref{def:conv-fac}) of the fixed-point iteration \eqref{eq:PI-iter-res} applied to a skew symmetric matrix $M$ with spectral radius $\rho(M)$.
Let $\varrho^{\rm worst}_{\rm AA}$ be the worst-case root-average convergence factor of the associated rAA(1) iteration \eqref{eq:rAA1-two-step-iter-res}.
Then,
\label{cor:skew-worst-case}
\begin{align} \label{eq:rho-skew-wc}
\varrho^{\rm worst}_{\rm AA}
=
\frac{\rho(M)}{\big[ 1 + \rho^2(M) \big]^{1/4}}
< 
\rho(M) 
= 
\varrho^{\rm worst}_{\rm FP}.
\end{align}
As such, the rAA(1) iteration converges for arbitrary $\bm{r}_0  \in \mathbb{R}^n$ if and only if 
\begin{align} \label{eq:skew-mmax-limit}
\rho(M) 
<
\sqrt{\tfrac{1}{2}(1 + \sqrt{5})} \approx 1.272.
\end{align}

\end{corollary}
See the left panel of \cref{fig:rho-wc-skew} for a plot comparing $\varrho^{\rm worst}_{\rm AA}$ and $\varrho^{\rm worst}_{\rm FP}$. 
In the right panel of \cref{fig:rho-wc-skew} we show the number of iterations $k_*$ required to reduce the error from its initial value by a factor $10^{-\nu_*}$ assuming a root-average convergence factor equal to $\varrho^{\rm worst}$, i.e., the $k_*$ such that $\Vert \bm{e}_{k_*} \Vert / \Vert \bm{e}_0 \Vert = \big( \varrho^{\rm worst} \big)^{k_*} = 10^{-\nu_*}$.
Considering the right plot, the reduction in iterations from using rAA(1) over the underlying fixed-point iteration is arguably negligible for small to moderate values of $\rho(M)$, e.g., $\rho(M) \lesssim 0.5$.

\begin{figure}[h!]
\centerline{
\includegraphics[width = 0.475\textwidth]{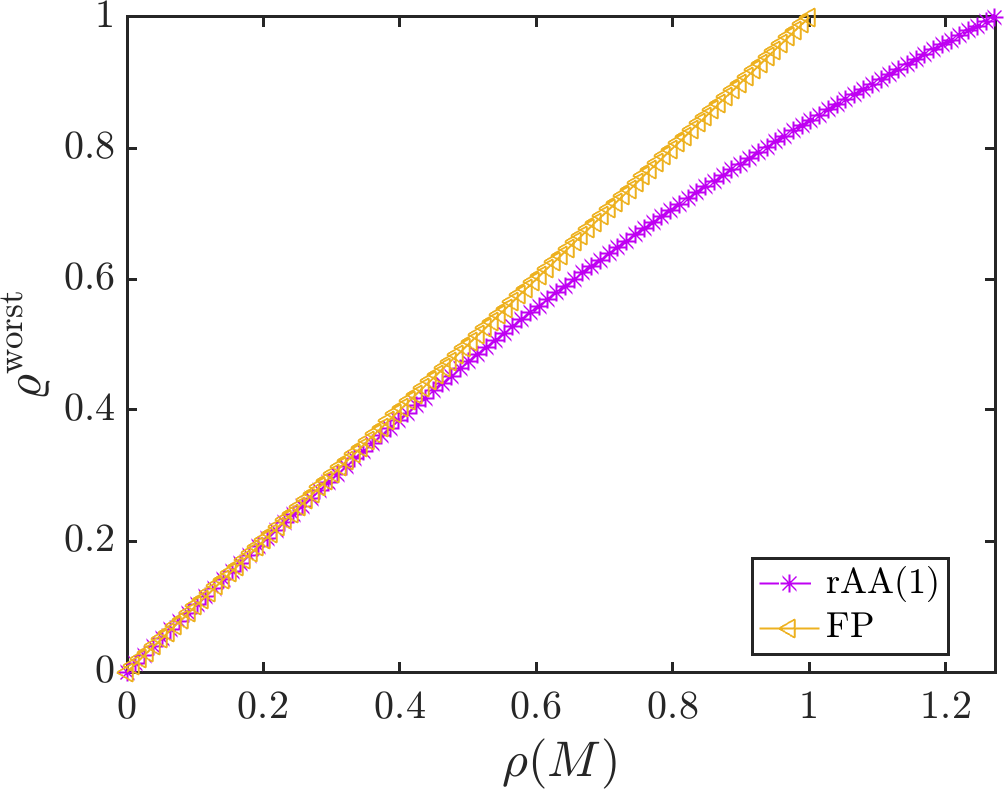}
\hspace{1ex}
\includegraphics[width = 0.47\textwidth]{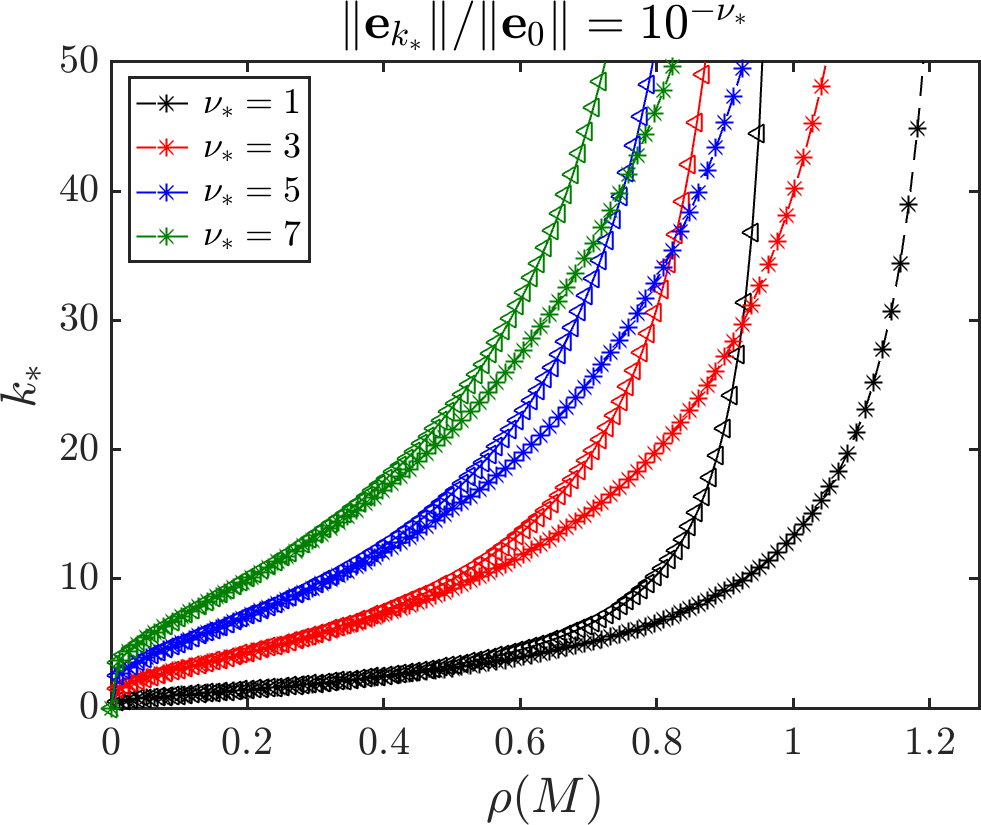}
}
\caption{For skew-symmetric matrices $M$ with spectral radius $\rho(M)$.
\textbf{Left:} Worst-case root-linear convergence factor, as per \cref{cor:skew-worst-case}.
\textbf{Right:} For a given $\rho(M)$, the number of iterations $k_*$ required to reduce the error by a factor of $10^{-\nu_*}$, with values of $\nu_*$ indicated in the legend. 
Asterisks markers represent AA, and triangle markers the underlying fixed-point iteration.
\label{fig:rho-wc-skew}
}
\end{figure}

\begin{proof}
Recall that $\varrho^{\rm worst}_{\rm FP} = \rho(M)$ is shown in \eqref{eq:rho-PI-wc}.
Otherwise, to prove $\varrho^{\rm worst}_{\rm AA}$ in \eqref{eq:rho-skew-wc} we show that there are vectors for which the upper bound in \eqref{eq:skew-upper-bound} achieves equality.
To this end, let us consider an initial rAA(1) residual that satisfies $\bm{r}_{0} \in \mathrm{span} \{ U_{n/2} \} \neq \bm{0}$. 
Then, from \cref{lem:skew-special-case} the corresponding subsequent residuals must also be in this subspace: $\bm{r}_{2\ell + 2} = {\cal R}( \bm{r}_{2\ell} ) \in \mathrm{span} \{ U_{n/2} \}$ for all $\ell = 0, 1, 2, \ldots$.
By combining \cref{lem:skew-special-case,lem:skew-upper-bound} and recalling that $|m_{n/2}| = |m_{\max}|$, it therefore holds for any $\bm{r}_{0} \in \mathrm{span} \{ U_{n/2} \} \neq \bm{0}$ that
\begin{align} \label{eq:skew-worst-case-quotient-aux}
\max \limits_{  \bm{v} \neq \bm{0} }
\frac{\Vert {\cal R}( \bm{v} ) \Vert}{\Vert \bm{v} \Vert}
\leq
\frac{|m_{\max}|^2}{\sqrt{1 + |m_{\max}|^2}}
=
\frac{|m_{n/2}|^2}{\sqrt{1 + |m_{n/2}|^2}}
=
\frac{\Vert {\cal R} (\bm{r}_{2 \ell}) \Vert}{\Vert \bm{r}_{2 \ell} \Vert}
\quad
\ell = 0, 1, 2, \ldots, 
\end{align}
That is, the quantity $\frac{\Vert {\cal R}( \bm{v} ) \Vert}{\Vert \bm{v} \Vert}$ is maximized whenever $\bm{v} \in \mathrm{span} \{ U_{n/2} \} \neq \bm{0}$.

Now let us consider arbitrary initial rAA(1) residuals $\bm{r}_0 \neq \bm{0}$, i.e. not just those in $\mathrm{span} \{ U_{n/2} \}$.
Recall from \cref{def:conv-fac} $\varrho( \bm{r}_0 ) =\limsup_{k \to \infty} \varrho_k( \bm{r}_0 )$, where $\varrho_k(\bm{r}_0) = \Vert \bm{r}_k \Vert^{1/k}$, and that $\varrho^{\rm worst} := \max_{\bm{r}_0 \neq \bm{0}} \varrho( \bm{r}_0 )$.
To simplify calculations suppose $k$ is even, then 
\begin{align*}
\max_{\bm{r}_0 \neq \bm{0}}
\frac{
\varrho_k( \bm{r}_0 )
}{\Vert \bm{r}_0 \Vert^{1/k}}
&=
\max_{\bm{r}_0 \neq \bm{0}}
\left(
\prod \limits_{\ell = 0}^{k/2-1} \frac{\Vert \bm{r}_{2\ell+2} \Vert}{\Vert \bm{r}_{2\ell} \Vert}
\right)^{1/k}
=
\left(
\prod \limits_{\ell = 0}^{k/2-1} 
\max_{\bm{r}_{2 \ell} \neq \bm{0}}
\frac{\Vert {\cal R}(\bm{r}_{2\ell}) \Vert}{\Vert \bm{r}_{2\ell} \Vert}
\right)^{1/k}
\\
&=
\left(
\prod \limits_{\ell = 0}^{k/2-1} 
\frac{|m_{\max}|^2}{\sqrt{1 + |m_{\max}|^2}}
\right)^{1/k}
=
\frac{|m_{\max}|}{(1 + |m_{\max}|^2)^{1/4}}.
\end{align*}
Plugging into the definition of $\varrho^{\rm worst}$ gives the result claimed result for $\varrho^{\rm worst}_{\rm AA}$.
Solving for $\varrho^{\rm worst}_{\rm AA} < 1$ yields \eqref{eq:skew-mmax-limit}.
\end{proof}
%

\subsection{Numerical results}
\label{sec:skew-num-res}

Here we provide numerical results in support of the theory from the previous section. 
Our test problems arise in the context of numerically solving the linear conservation law $u_t = [\alpha(x) u]_x$ for a given wave-speed function $\alpha$, and with unknown function $u$ subject to periodic boundary conditions in space, and an initial condition in time.
Consider discretizing this PDE in space using a skew-symmetric derivative operator $\mathrm{D}  \in \mathbb{R}^{n \times n}$; that is, for a periodic, differentiable function $a$, $(\mathrm{D}  \bm{a})_i \approx a'(x_i)$ where $\bm{a} = [a(x_1), \ldots, a(x_n)]^\top$, and $x_i$ are, e.g., mesh or collocation points. 
The PDE then becomes the system of ordinary differential equations $\bm{u}' = \mathrm{D}  \Lambda \bm{u}$, $\Lambda = \diag_i(\alpha(x_i))$, which we discretize with the Crank-Nicolson method. To advance the solution from time $t_n$ to $t_{n+1}$ we must then solve the linear system $(I - \tfrac{1}{2} \delta t \mathrm{D}  \Lambda) \bm{u}^{n+1} = (I + \tfrac{1}{2} \delta t \mathrm{D}  \Lambda) \bm{u}^{n}$.
For readability, let us write $\bm{u} := \bm{u}^{n+1}$, and $\bm{f}:= (I + \tfrac{1}{2} \delta t \mathrm{D}  \Lambda) \bm{u}^{n}$.
Then, to solve the linear system $(I - \tfrac{1}{2} \delta t \mathrm{D}  \Lambda) \bm{u} = \bm{f}$ we employ the following simple fixed-point iteration:
\begin{align} \label{eq:skew-PI}
\bm{u}_{k+1} = \frac{1}{2} \delta t \mathrm{D} \Lambda \bm{u}_{k} + \bm{f}, \quad k = 0, 1, \ldots
\end{align}
Notice that this iteration is exactly of the form of \eqref{eq:PI-iter}, with solution $\bm{x} = \bm{u}$, forcing term $\bm{b} = \bm{f}$ and skew-symmetric matrix $M = \tfrac{1}{2} \delta t \mathrm{D} \Lambda$.\footnote{Analogously to the two-grid example in \cref{sec:symm-num-res}, $M$ is not technically skew symmetric because $\mathrm{D}$ and $\Lambda$ do not commute unless $\alpha(x) \equiv $ constant.
However, since $\alpha(x) > 0$, $\Lambda$ is SPD, and we have that $M$ is similar to the skew-symmetric matrix $\wt{M} = \Lambda^{1/2} M \Lambda^{-1/2} = \tfrac{1}{2} \delta t \Lambda^{1/2} \mathrm{D}  \Lambda^{1/2}$. See, e.g., \cite[Example 13.17]{Hesthaven2017} for related ideas.}
We consider two skew-symmetric differentiation matrices $\mathrm{D}$. The first uses a basic second-order central finite difference, and the second uses the Fourier collocation spectral method, see e.g., \cite[Chapter 13.2]{Hesthaven2017}.
These two tests use $\delta t = 1.7 h$ and $\delta t = 0.85 h$, for $h$ the spacing between finite-difference nodes and collocation points, respectively.
The wave-speed is $\alpha(x) = \tfrac{1}{2} [1 + \cos^2(x)]$ and the spatial domain $x \in (0, 2 \pi)$.

\begin{figure}[t!]
\centerline{
\includegraphics[width=0.475\textwidth]{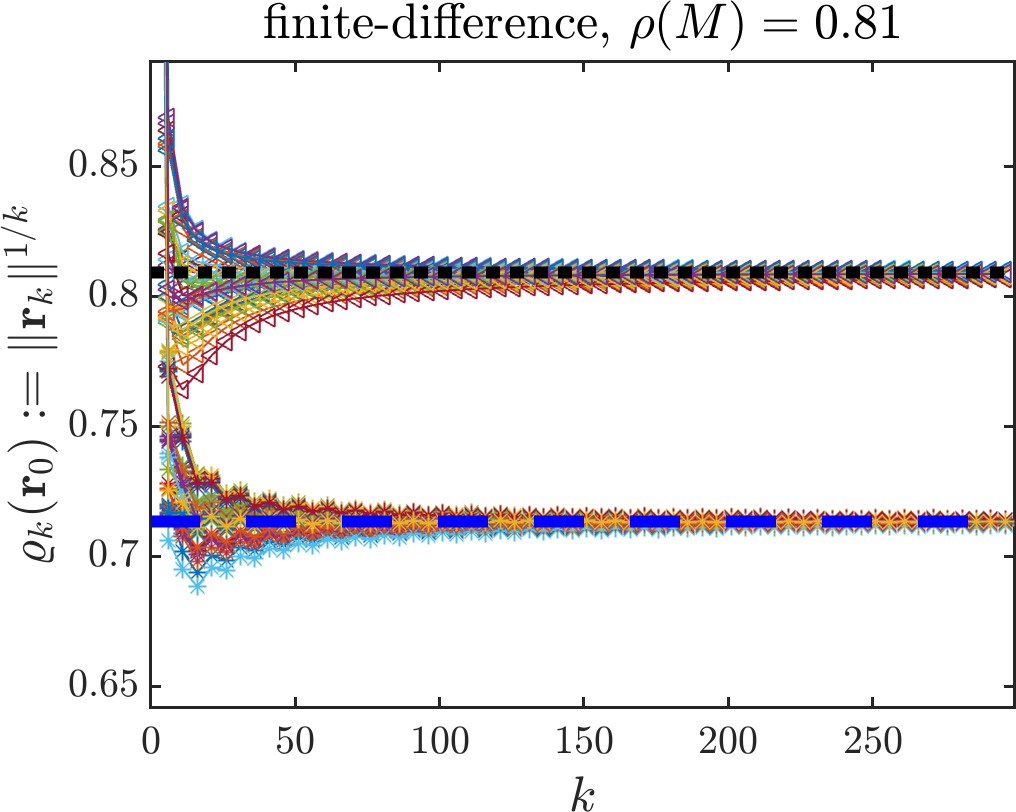}
\hspace{2ex}
\includegraphics[width=0.475\textwidth]{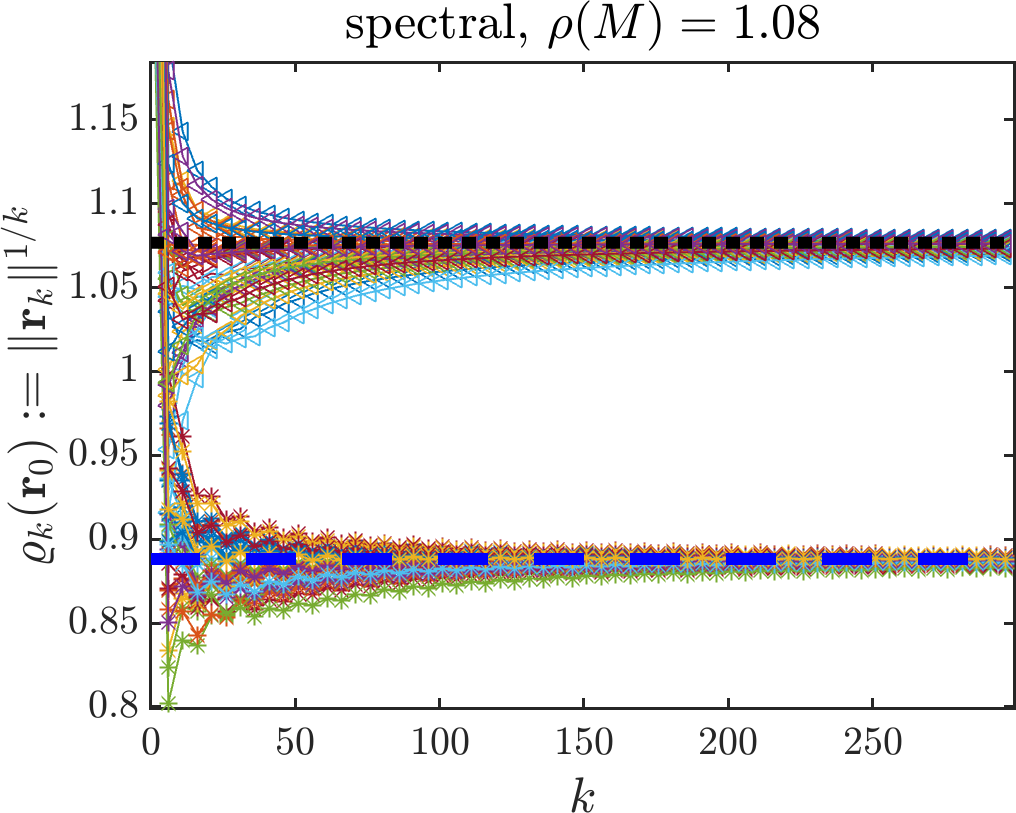}
}
\caption{
For two matrices $M = \tfrac{1}{2} \delta t \mathrm{D}  \Lambda$ similar to skew symmetric matrices, shown is the convergence factor $\varrho_k(\mathbf{r}_0)$ as a function of the iteration index $k$ that limits to the asymptotic convergence factor as ${k \to \infty}$ (see \cref{def:conv-fac}).
Triangle markers depict the underlying fixed-point iteration \eqref{eq:PI-iter}, and asterisk markers the rAA(1) iteration \eqref{eq:rAA1-iter}.
For both problems, each algorithm is initialized with 30 different $\bm{r}_0$ chosen at random.
\textbf{Left:} $\mathrm{D} $ is a 2nd-order accurate finite-difference discretization using $n = 64$ points.
\textbf{Right:} $\mathrm{D} $ is a Fourier spectral differentiation matrix with $n = 31$ collocation points.
The worst-case asymptotic convergence factor \eqref{eq:rho-PI-wc} for the underlying fixed-point iteration is shown in each plot as a thick, black dotted line, and that of \eqref{eq:rho-skew-wc} for the rAA(1) iteration is shown as the thick, blue dashed line.
\label{fig:skew-advection}
}
\end{figure}

In \cref{fig:skew-advection} we plot the convergence factor for the underlying fixed-point iteration \eqref{eq:skew-PI} and that for the associated rAA(1) iteration. We consider 30 different $\bm{u}_0$ vectors, all with random entries drawn from $[-1, 1]$, and we take {$\bm{f} := (I + \tfrac{1}{2} \delta t \mathrm{D}  \Lambda) \bm{u}^{n} = \bm{0}$} in all cases, corresponding to $\bm{u}^n = \bm{0}$.
In all cases we see that the rAA(1) convergence factor approaches the worst-case asymptotic convergence factor predicted by \eqref{eq:rho-skew-wc} of \cref{cor:skew-worst-case}.
Notice that the right-hand panel in \cref{fig:skew-advection} provides an example where the underlying fixed-point iteration does not converge (the convergence factor is larger than unity), while rAA(1) does.
We note that some other examples in literature have shown that AA may converge on problems where the underlying (nonlinear) iteration function $\bm{q}(\bm{x})$ is not a contraction, at least not globally \cite{Lott-etal-2012-noncontractive,Pollock-Rebholz-2021-noncontractive}.

Interestingly, the asymptotic convergence factor of rAA(1) does not appear to depend on $\bm{r}_0$.
Additional numerical tests (see Supplementary Materials Section \ref{SMsec:skew-additional}) for other skew symmetric $M$ indicate that this is a general trend and is not specific to the $M$ considered in \cref{fig:skew-advection}. This is despite the fact that when applied to symmetric matrices $M$, the rAA(1) convergence factor \textit{may} strongly depend on $\bm{r}_0$ (see \cref{sec:symm-num-res}).
We provide a theoretical explanation for this observation in the next section.
%

\subsection{Independence of convergence factor on initial iterate}
\label{sec:skew-r0}

What (the proof of) \cref{lem:skew-rho0-r0} below shows is that residual iteration \eqref{eq:rAA1-two-step-iter-res} for $M$ skew symmetric behaves analogously to a non-normalized power iteration for the dominant subspace $\mathrm{span}(U_j)$ of $M$.
Thus, the rAA(1) iteration behaves analogously to the underlying fixed-point iteration \eqref{eq:PI-iter-res} which is the non-normalized power iteration $\bm{r}_k = M^k \bm{r}_0$.
This explains the observation from the previous section wherein the asymptotic convergence factor did not appear to depend on $\bm{r}_0$.
Note that the proof of \cref{lem:skew-rho0-r0} requires the following auxiliary result.

\begin{lemma} \label{lem:skew-g-function}
Suppose $M$ is skew symmetric.
Let $\alpha$ be as in \eqref{eq:alpha-def}, $\bm{z}_j \in \mathrm{span}(U_j)$ with $U_j$ as in \eqref{eq:Uj-def}, and $\bm{0} \neq \bm{v} \in \mathbb{R}^n$ be arbitrary.
Then, 
\begin{align}
\Vert M[I - \alpha( \bm{v} ) A] \bm{z}_j \Vert
&=
g(|m_j|, \bm{v}) 
\Vert \bm{z}_j \Vert,
\end{align}
where
$
g(|m_j|, \bm{v})
:=
|m_j|
\sqrt{
\big[ |m_j|\alpha(\bm{v}) \big]^2 + \big[ 1 -  \alpha( \bm{v} ) \big]^2
}
$
is strictly increasing with $|m_j|$.
\end{lemma}
 
\begin{proof}
Write $\alpha \equiv \alpha( \bm{v} )$. Then,
$
\Vert M[I - \alpha A] \bm{z}_j \Vert^2 
= 
\Vert M [ (1 - \alpha ) + \alpha M ] \bm{z}_j \Vert^2 
= 
\Vert (1 - \alpha ) M \bm{z}_j  - \alpha |m_j|^2 \bm{z}_j \Vert^2
$.
Recalling $\la \bm{z}_j, M \bm{z}_j \ra = 0$ (since the symmetric part of $M$ is the zero matrix) and using \eqref{eq:skew-M-invar} gives
$\Vert M[I - \alpha A] \bm{z}_j \Vert^2 
=
\la (1 - \alpha ) M \bm{z}_j, (1 - \alpha ) M \bm{z}_j \ra
+
\la \alpha |m_j|^2 \bm{z}_j, \alpha |m_j|^2 \bm{z}_j \ra
 =
|m_j|^2 
\big[ 
(1 - \alpha)^2
+
\alpha^2 |m_j|^2
\big]
\Vert \bm{z}_j \Vert^2
= 
\big[ g(|m_j|, \bm{v}) \big]^2 \Vert \bm{z}_j \Vert^2$.
\end{proof}

\begin{lemma}
\label{lem:skew-rho0-r0}
Consider applying rAA(1) to skew symmetric matrix $M$.
Suppose $\bm{r}_0 = \sum_{j = 1}^{j^*} \bm{z}_j \neq \bm{0}$, with $\bm{z}_j \in \mathrm{span}( U_j )$ and $U_j$ as in \eqref{eq:Uj-def}, and $ 1 \leq j^* \leq n/2$ the largest $j$ for which $\Vert \bm{z}_j \Vert \neq 0$.
Assume the eigenvalues $\pm \i|m_{j^*}|$ of $M$ have algebraic multiplicity one.
Then, the rAA(1) residual $ \bm{r}_{k}$ satisfies
\begin{align}
\lim \limits_{k \to \infty} \bm{r}_{k} 
\in 
\mathrm{span}( U_{j^*} ).
\end{align}
\end{lemma}
Before proving this lemma, let us use it to re-examine our discussion from \cref{sec:con-and-R-dif} on the continuity and differentiability ${\cal R}$. 
From \cref{lem:R-cont-dif}, recall 
\begin{align} \label{eq:DR-copy}
\mathfrak{D} {\cal R}(\bm{v}, \bm{d})
=
M [I - \alpha( \bm{v} ) A] \bm{d}
- 
\la \alpha'( \bm{v} ), \bm{d} \ra  M A \bm{v},
\end{align}
with $\alpha'$ given in \eqref{eq:alpha-prime}, is the directional derivative of ${\cal R}$ at the point $\bm{v} \neq \bm{0}$ in the direction $\bm{d}$.
Recall further from \cref{lem:R-cont-dif} that ${\cal R}$ is differentiable at $\bm{v}$ iff $A \bm{v}$ is an eigenvector of $Y$, and from \cref{sec:skew-intro} that any $\bm{z} \in \mathrm{span}( U_{j} )$ is an eigenvector of $Y$.
Observe that if $\bm{z} \in \mathrm{span}( U_{j} )$, then $A \bm{z} \in \mathrm{span}( U_{j} )$ since $A = I - M$.
Hence, when $M^\top =- M$, ${\cal R}$ is differentiable at any point $\bm{z} \in \mathrm{span}( U_{j} )$ with Jacobian ${\cal R}'( \bm{z} ) = M [I - \alpha( \bm{z} ) A]$.
From \cref{lem:skew-rho0-r0} recall that the rAA(1) residual vector satisfies $\lim_{k \to \infty} \bm{r}_k \in \mathrm{span}( U_{j^*} )$ for some $j^*$.
As such we see that, while the rAA(1) residual propagator ${\cal R}( \bm{r}_k )$ is not (necessarily) differentiable for $k \in \mathbb{N}_0$, it is differentiable as $k \to \infty$ since the non-differentiable term in \eqref{eq:DR-copy} vanishes along the path taken by the method as $\bm{r}_k \to \bm{0}$.
Moreover, the (square root of the) spectral radius of the Jacobian matrix $\lim_{k \to \infty}{\cal R}'( \bm{r}_k ) = M [I - \alpha( \mathrm{span}( U_{n/2} ) ) A]$ is exactly the worst-case asymptotic convergence factor from \cref{cor:skew-worst-case}, as is consistent with \cref{thm:ostrowski}.

\begin{proof}
The arguments here work along the lines of those for the power method for computing the dominant eigenpair of a diagonalizable matrix, see e.g., \cite[Section 4.4.1]{Demmel1997}.
The first step is to develop a recursive expression for the $k$th residual vector $\bm{r}_k$.
To this end, let us change notation slightly from that used in the body of the lemma to write $\bm{r}_0$ as
$
\bm{r}_0 = \Vert \bm{z}^0_{j^*} \Vert \sum_{j = 1}^{j^*} \frac{\bm{z}^0_j}{\Vert \bm{z}^0_{j^*} \Vert}
$
with $\bm{z}_j^0 = \bm{z}_j$.
The second residual may now be written
\begin{align}
\begin{split}
\bm{r}_2 
&=
M[ I - \alpha( \bm{r}_0 ) A ] 
\left(
\Vert \bm{z}^0_{j^*} \Vert
\sum_{j = 1}^{j^*}
\frac{\bm{z}^0_j}{\Vert \bm{z}^0_{j^*} \Vert}
\right)
=
\Vert \bm{z}^0_{j^*} \Vert
\sum_{j = 1}^{j^*}
\underbrace{
\left(  
M[ I - \alpha( \bm{r}_0 ) A ] \frac{\bm{z}^0_j}{\Vert \bm{z}^0_{j^*} \Vert}
\right)}_{=: \bm{z}^2_j}
\\
&=
\Vert \bm{z}^0_{j^*} \Vert
\sum_{j = 1}^{j^*} \bm{z}^2_j
=
\Vert \bm{z}^0_{j^*} \Vert
\Vert \bm{z}^2_{j^*} \Vert 
\sum_{j = 1}^{j^*}  \frac{\bm{z}^2_j}{\Vert \bm{z}^2_{j^*} \Vert}.
\end{split}
\end{align}
Note that $\bm{z}^2_j \in \mathrm{span}(U_j)$ since $M[ I - \alpha( \bm{r}_0 ) A ]$ is a polynomial in $M$ so it is invariant on the subspace $\mathrm{span}(U_j)$.
Notice that $\bm{r}_2$ has the same structure as $\bm{r}_0$, in the sense that it is a linear combination of vectors from the subspaces $\mathrm{span}(U_j)$, $j = 1, \ldots, j^*$. 
Thus, by an inductive argument we may express $\bm{r}_k$, $k = 2, 4, 6, \ldots$, as
\begin{align} \label{eq:rk-zk-sum}
\bm{r}_k
&=
\zeta_k
\sum_{j = 1}^{j^*}  \frac{\bm{z}^k_j}{\Vert \bm{z}^k_{j^*} \Vert}, 
\quad
\mathrm{where}
\quad
\bm{z}^k_j 
&:= 
M[ I - \alpha( \bm{r}_{k-2} ) A ] \frac{\bm{z}^{k-2}_j}{\Vert \bm{z}^{k-2}_{j^*} \Vert} \in \mathrm{span}(U_j),
\end{align}
and $\zeta_k
:=
\prod_{\ell = 0, 2, \ldots}^{k} \Vert \bm{z}_{j^*}^{\ell} \Vert.$
\\ Next we analyze the size of the addends $\frac{\bm{z}^k_j}{\Vert \bm{z}^k_{j^*} \Vert}$ of $\bm{r}_k$ in \eqref{eq:rk-zk-sum}.
Plugging in the definition of $\bm{z}^k_j$ gives, for $k \geq 2$,
\begin{align}
\frac{\Vert \bm{z}^k_j \Vert}{\Vert \bm{z}^k_{j^*} \Vert}
=
\frac{
\frac{\Vert M[I - \alpha( \bm{r}_{k-2} ) A] \bm{z}^{k-2}_j \Vert}{\Vert \bm{z}^{k-2}_{j^*} \Vert}
}{
\frac{\Vert M[I - \alpha( \bm{r}_{k-2} ) A] \bm{z}^{k-2}_{j^*} \Vert}{\Vert \bm{z}^{k-2}_{j^*} \Vert}
}
=
\frac{
\Vert M[I - \alpha( \bm{r}_{k-2} ) A] \bm{z}^{k-2}_j \Vert
}{
\Vert M[I - \alpha( \bm{r}_{k-2} ) A] \bm{z}^{k-2}_{j^*} \Vert
}
=
c_{j, k-2}  
\frac{\Vert \bm{z}^{k-2}_j \Vert}{\Vert \bm{z}^{k-2}_{j^*} \Vert},
\end{align}
with $c_{j, k-2} := g(|m_j|, \bm{r}_{k-2}) / g(|m_{j^*}|, \bm{r}_{k-2})$, and $g$ defined in \cref{lem:skew-g-function}.
Noting that
$\frac{\Vert \bm{z}^k_j \Vert}{\Vert \bm{z}^k_{j^*} \Vert} 
=
c_{j, k-2}  
\frac{\Vert \bm{z}^{k-2}_j \Vert}{\Vert \bm{z}^{k-2}_{j^*} \Vert}$ holds independent of $k$, we can recursively apply this identity to compute $\frac{\Vert \bm{z}^{k-2}_j \Vert}{\Vert \bm{z}^{k-2}_{j^*} \Vert}$, and then $\frac{\Vert \bm{z}^{k-4}_j \Vert}{\Vert \bm{z}^{k-4}_{j^*} \Vert}$ and so on, to give
\begin{align}
\frac{\Vert \bm{z}^k_j \Vert}{\Vert \bm{z}^k_{j^*} \Vert}
=
c_{j, k-2}
\frac{\Vert \bm{z}^{k-2}_j \Vert}{\Vert \bm{z}^{k-2}_{j^*} \Vert}
=
c_{j, k-2}
c_{j, k-4}
\frac{\Vert \bm{z}^{k-4}_j \Vert}{\Vert \bm{z}^{k-4}_{j^*} \Vert}
=
\cdots
=
\Bigg(
\prod_{\ell = 0, 2, \ldots}^{k-2}
c_{j, \ell}
\Bigg)
\frac{\Vert \bm{z}^{0}_j \Vert}{\Vert \bm{z}^{0}_{j^*} \Vert}.
\end{align}
From \cref{lem:skew-g-function}, $g(|m_j|, \bm{r}_{\ell})$ is a strictly increasing function of $|m_j|$ regardless of the value of $\bm{r}_{\ell}$.
Hence, $c_{j, \ell} < 1$ for all $\ell$ when $j < j^*$, and, thus
\begin{align}
\lim \limits_{k \to \infty}
\frac{\Vert \bm{z}^k_j \Vert}{\Vert \bm{z}^k_{j^*} \Vert}
=
\frac{\Vert \bm{z}^{0}_j \Vert}{\Vert \bm{z}^{0}_{j^*} \Vert}
\lim \limits_{k \to \infty}
\Bigg(
\prod_{\ell = 0, 2, \ldots}^{k-2}
c_{j, \ell}
\Bigg)
=
0
\quad 
\mathrm{when}
\quad 
j < {j^*}.
\end{align}
That is, as $k \to \infty$ in \eqref{eq:rk-zk-sum} the magnitude of the first $j^* -1$ addends $\frac{\bm{z}^k_j}{\Vert \bm{z}^k_{j^*} \Vert}$, $j = 1 ,\ldots, j^* - 1$, goes to zero.
On the other hand, the last addend always has magnitude one since $\frac{\Vert \bm{z}^k_j \Vert}{\Vert \bm{z}^k_{j^*} \Vert} = 1$ when $j = j^*$.
Hence, taking the limit of $\bm{r}_k$ in \eqref{eq:rk-zk-sum} gives
\begin{align}
\lim \limits_{k \to \infty}
\bm{r}_k
=
\lim \limits_{k \to \infty} \zeta_k
\frac{\bm{z}^k_{j^*}}{\Vert \bm{z}^k_{j^*} \Vert}
\in
\mathrm{span}(U_{j^*}).
\end{align}
\end{proof}

\section{Conclusion}
\label{sec:conclusion}

Despite widespread use of AA, little is understood about its ability to accelerate an underlying fixed-point iteration. 
This work sheds light on the convergence behaviour of a restarted AA iteration with restart size of one.
While this algorithm is relatively simple, its convergence behavior is not by any means.

Here we explicitly computed the asymptotic convergence factor of this restarted AA method in the special cases that the underlying fixed-point iteration consists of applying either a symmetric or skew symmetric matrix, proving for each case that the AA iteration strictly converges faster than the underlying fixed-point iteration.
In the symmetric case we proved that this convergence factor may depend strongly on the initial iterate, and we showed how this is a manifestation of the there being an infinite number of fixed points to the underlying residual iteration, resulting in a continuum of possible convergence factors.
Conversely, in the skew-symmetric case we proved the convergence factor does not depend on the initial iterate due to it behaving analogously to a power iteration.
Numerical results were used to verify the theory and to demonstrate how it can be extended to describing AA convergence for certain nonlinear fixed-point iterations.
Our future work will focus on the analysis of windowed AA iterations and understanding to what extent the techniques employed here are applicable there.
%

\section*{Acknowledgments}

Critical feedback from anonymous referees is gratefully acknowledged.
Los Alamos Laboratory report number LA-UR-24-33091.

\bibliographystyle{siamplain}
\bibliography{rAA1-convergence.bib}

\appendix

\section{Proofs from \cref{sec:symm}}
\label{app:symm}
To help with the readability of the proof of \cref{thm:eigvec}, we split out from it \cref{lem:Rcv,lem:Rz} below.
Many results here use the following two basic properties of the Rayleigh quotient.
Let $R(S, \bm{z}) = \la \bm{z} , S \bm{z} \ra / \la \bm{z}, \bm{z} \ra$ be the Rayleigh quotient between vector $\bm{0} \neq \bm{z} \in \mathbb{R}^n$ and symmetric matrix $S \in \mathbb{R}^{n \times n}$. Then for $0 \neq c \in \mathbb{R}$ we have
\begin{align} \tag{RQP1} \label{RQprop:1}
R(S, c \bm{z}) = R(S, \bm{z}).
\end{align}
Further, if $(s_1, \bm{y}_1)$ and $(s_2,\bm{y}_2)$ are eigenpairs of $S$, with $\bm{y}_1$ and $\bm{y}_2$ orthogonal, then
\begin{align} \tag{RQP2} \label{RQprop:2}
R(S, \bm{y}_1 + \bm{y}_2) = \frac{s_1 \Vert \bm{y}_1 \Vert^2 + s_2 \Vert \bm{y}_2 \Vert^2}{\Vert \bm{y}_1 \Vert^2 + \Vert \bm{y}_2 \Vert^2}.
\end{align}
%

\subsection{Auxiliary results}
\label{app:symm-aux}

\begin{lemma} \label{lem:Rcv}
Let ${\cal R}$ be the two-step residual propagator in \eqref{eq:calR-def} with  $M$ symmetric.
Let $0 \neq c \in \mathbb{R}$ and $\bm{v}$ be an eigenvector of $M$.
Then, 
\begin{align} \label{eq:Rcv}
{\cal R}(c \bm{v}) = \bm{0}.
\end{align}
\end{lemma}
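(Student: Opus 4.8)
The plan is to exploit that an eigenvector of $M$ is simultaneously an eigenvector of $A := I - M$ and of $A^{-1}$, which collapses the Rayleigh quotient defining $\beta$ to a single scalar and makes the inner operator $M - \beta A$ annihilate $\bm{v}$. First I would write $M\bm{v} = m\bm{v}$ for the associated eigenvalue $m$, so that $A\bm{v} = (1-m)\bm{v} =: a\bm{v}$ with $a \neq 0$ by invertibility of $A$, and hence $A^{-1}\bm{v} = a^{-1}\bm{v}$. Note that since $c \neq 0$ and $\bm{v} \neq \bm{0}$, the argument $c\bm{v}$ lies in the domain of ${\cal R}$, so ${\cal R}(c\bm{v})$ is well defined.

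Next I would evaluate $\beta(c\bm{v})$ from \eqref{eq:beta-def}. Using the scale-invariance \eqref{RQprop:1} of the Rayleigh quotient together with $A(c\bm{v}) = ca\,\bm{v}$, I get $R\big(A^{-1}, A(c\bm{v})\big) = R(A^{-1}, ca\,\bm{v}) = R(A^{-1}, \bm{v})$, and since $\bm{v}$ is an eigenvector of $A^{-1}$ with eigenvalue $a^{-1}$ this Rayleigh quotient equals $a^{-1}$. Thus $\beta(c\bm{v}) = -1 + a^{-1} = (1-a)/a = m/a$, using $1 - a = m$. Substituting into \eqref{eq:calR-def} then gives $(M - \beta A)(c\bm{v}) = c(m - \beta a)\bm{v} = c(m - m)\bm{v} = \bm{0}$, and applying $M$ yields ${\cal R}(c\bm{v}) = M\bm{0} = \bm{0}$, as claimed.

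The computation is short and presents no genuine obstacle; the only point requiring care is the simplification of the Rayleigh quotient via \eqref{RQprop:1} and the cancellation $\beta a = m$ that forces the bracketed operator to vanish on $\bm{v}$. Conceptually, the result records that whenever the residual points along a single eigendirection, the least-squares step of AA(1) selects $\beta$ so as to eliminate that residual exactly in one extrapolation, which is precisely the mechanism behind the special case $\rho(\bm{r}_0) = 0$ stated in \cref{thm:2x2}.
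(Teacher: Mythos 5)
Your proof is correct and follows essentially the same route as the paper's: both use the scale-invariance property \eqref{RQprop:1} to reduce $\beta(c\bm{v})$ to $-1 + a^{-1}$, and then observe the cancellation $m - \beta a = 0$ so that the bracketed operator annihilates $\bm{v}$. The only difference is cosmetic—you factor the cancellation as $\beta a = m$ while the paper writes it as $1 - a - a\beta(c\bm{v}) = 0$—so there is nothing substantive to add.
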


\begin{proof}

Plugging into \eqref{eq:calR-def} and using that $\bm{v}$ is an eigenvector of $M$ and $A$ with eigenvalues $m = 1 - a$ and $a$, respectively, gives 
\begin{align} \label{eq:Rcv-int-1}
{\cal R}(\bm{c v}) 
= 
c M \big[ I - \alpha(c \bm{v}) A \big] \bm{v}
=
c m \big[ 1 - a \alpha(c \bm{v}) \big] \bm{v}.
\end{align}
Using the definition of $\alpha$ in \eqref{eq:alpha-def} and \eqref{RQprop:1} gives $\alpha(c \bm{v}) = \alpha( \bm{v} ) = 1/a$, since $\bm{v}$ is an eigenvector of $A^{-1}$ with eigenvalue $1/a$. Plugging into \eqref{eq:Rcv-int-1} gives the claim.
\end{proof}

\begin{lemma} \label{lem:Rz}
Let ${\cal R}$ be the two-step residual propagator in \eqref{eq:calR-def} with  $M$ symmetric.
Let $\bm{v}_i$ and $\bm{v}_j$ be two orthogonal eigenvectors of $M$ with eigenvalues $m_i$ and $m_j$, respectively. Denote the corresponding eigenvalues of $A$ by $a_i = 1 - m_i$, and $a_j = 1 - m_j$.
Let $c_i, c_j \in \mathbb{R}$ be non-zero constants, and define the vector $\bm{z}$ as
\begin{align} \label{eq:z-def-1}
\bm{z} := c_i \bm{v}_i + c_j \bm{v}_j 
= 
c_i ( \bm{v}_i + \varepsilon \bm{v}_j),
\quad
\varepsilon := \frac{c_j}{c_i}.
\end{align}
Then, 
\begin{align} \label{eq:Rz}
{\cal R} ( \bm{z} )
=
c_i
\frac{(a_j - a_i) \varepsilon }{a_i^2 + a_j^2 \varepsilon^2} 
\big[
m_i a_j \varepsilon \bm{v}_i
-
m_j a_i \bm{v}_j
\big].
\end{align}
\end{lemma}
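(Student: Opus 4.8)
The plan is to exploit that $\bm{z}$ lies in the two-dimensional subspace $\mathrm{span}\{\bm{v}_i, \bm{v}_j\}$, which is simultaneously diagonalized by $M$, by $A = I - M$, and by $A^{-1}$: we have $M \bm{v}_i = m_i \bm{v}_i$ and $A \bm{v}_i = a_i \bm{v}_i$, and likewise for the index $j$. Since every operator appearing in ${\cal R}$ acts diagonally on this subspace, the whole computation reduces to scalar arithmetic in the $\bm{v}_i$- and $\bm{v}_j$-coefficients. Without loss of generality I take $\bm{v}_i$ and $\bm{v}_j$ to be orthonormal.

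First I would evaluate the scalar $\beta(\bm{z}) = -1 + R(A^{-1}, A\bm{z})$. Writing $A\bm{z} = c_i a_i \bm{v}_i + c_j a_j \bm{v}_j$ exhibits $A\bm{z}$ as a sum of two orthogonal eigenvectors of the symmetric matrix $A^{-1}$, with eigenvalues $1/a_i$ and $1/a_j$. Applying the orthogonal Rayleigh-quotient identity \eqref{RQprop:2} of \cref{lem:RQ-prop}, then dividing numerator and denominator by $c_i^2$ and using $\varepsilon = c_j / c_i$, gives
\begin{align}
\beta(\bm{z}) = -1 + \frac{a_i + \varepsilon^2 a_j}{a_i^2 + \varepsilon^2 a_j^2}.
\end{align}

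Next, because $\bm{v}_i$ and $\bm{v}_j$ are eigenvectors of both $M$ and $A$, the operator ${\cal R}(\bm{v}) = M[M - \beta(\bm{v}) A]\bm{v}$ acts componentwise on $\bm{z}$, so that
\begin{align}
{\cal R}(\bm{z}) = c_i\, m_i (m_i - \beta a_i)\, \bm{v}_i + c_j\, m_j (m_j - \beta a_j)\, \bm{v}_j,
\end{align}
with $\beta = \beta(\bm{z})$ the scalar just found. The crux of the proof is the simplification of the factors $m_i - \beta a_i$ and $m_j - \beta a_j$. Substituting the expression for $\beta$ and using the relation $m_i + a_i = 1$ (since $a_i = 1 - m_i$) collapses $m_i - \beta a_i$ to $\varepsilon^2 a_j (a_j - a_i)/(a_i^2 + \varepsilon^2 a_j^2)$ and, symmetrically, $m_j - \beta a_j$ to $a_i(a_i - a_j)/(a_i^2 + \varepsilon^2 a_j^2)$. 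The emergence of the common factor $(a_j - a_i)$ in both coefficients is precisely what permits the final factored form.

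Finally, substituting $c_j = c_i \varepsilon$ into the $\bm{v}_j$-coefficient, factoring the common scalar $c_i (a_j - a_i)\varepsilon / (a_i^2 + a_j^2 \varepsilon^2)$ out of both terms, and collecting the leftover factors $m_i a_j \varepsilon$ and $-m_j a_i$ reproduces the claimed expression \eqref{eq:Rz}. I expect the only genuine obstacle to be bookkeeping: correctly distributing the sign and the single power of $\varepsilon$ between the two coefficients when the common prefactor is extracted. Everything else is diagonal scalar algebra enabled by the eigenstructure and the identity $m + a = 1$.
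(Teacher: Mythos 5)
Your proposal is correct and follows essentially the same route as the paper's proof: expand ${\cal R}(\bm{z})$ componentwise using the common eigenbasis, evaluate $\beta(\bm{z})$ via the Rayleigh-quotient identities of \cref{lem:RQ-prop} to get $-1 + (a_i + \varepsilon^2 a_j)/(a_i^2 + \varepsilon^2 a_j^2)$, simplify the two scalar coefficients $m_i - \beta a_i$ and $m_j - \beta a_j$ to exhibit the common factor $(a_j - a_i)$, and factor. Your intermediate expressions all match the paper's (the only cosmetic difference being that you apply \eqref{RQprop:2} directly to $c_i a_i \bm{v}_i$ and $c_j a_j \bm{v}_j$ and divide by $c_i^2$, where the paper first strips the constant $c_i$ via \eqref{RQprop:1}).
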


\begin{proof}
For readability, let us set $i = 1$ and $j = 2$ in \eqref{eq:z-def-1}.
Since the constants $c_1, c_2$ in \eqref{eq:z-def-1} are arbitrary, without loss of generality, let us assume that  $\Vert \bm{v}_1 \Vert = \Vert \bm{v}_2 \Vert = 1$. 
From \eqref{eq:calR-def} and \eqref{eq:alpha-def} we have that
\begin{align}
{\cal R}(\bm{z}) 
= 
M \big[ I - \alpha(\bm{z}) A \big] \bm{z},
\quad
\alpha( \bm{z} ) = R\big( A^{-1}, A \bm{z} \big).
\end{align}
Plugging $\bm{z} = c_1 ( \bm{v}_1 + \varepsilon \bm{v}_2)$ into ${\cal R}$ and using that $\bm{v}_1$ and $\bm{v}_2$ are eigenvectors gives
\begin{align} \label{eq:Rz-int-0}
{\cal R}(\bm{z}) 
=
c_1 \big[ m_1 \big( 1 - \alpha(\bm{z}) a_1 \big) \bm{v}_1 
+ 
m_2 \big( 1 - \alpha(\bm{z}) a_2 \big) \varepsilon \bm{v}_2 \big].
\end{align}
Now consider $\alpha(  c_1 ( \bm{v}_1 + \varepsilon \bm{v}_2) )$. Making use of \eqref{RQprop:1}, \eqref{RQprop:2}, and the fact that $\bm{v}_1,\bm{v}_2$ are orthogonal eigenvectors of $A$ we have:
\begin{align} \label{eq:beta-z}
\alpha( \bm{z} ) 
&= 
R\big( A^{-1}, c_1 A ( \bm{v}_1 + \varepsilon \bm{v}_2) \big)
=
R\big(A^{-1}, a_1 \bm{v}_1 + a_2 \varepsilon \bm{v}_2 )\big)
=
\frac{a_1 + a_2 \varepsilon^2}{a_1^2 + a_2^2 \varepsilon^2}.
\end{align}
Now let us plug \eqref{eq:beta-z} into the two factors in \eqref{eq:Rz-int-0}, yielding
\begin{align}
1 - \alpha(\bm{z}) a_1
=
\frac{a_2 - a_1}{a_1^2 + a_2^2 \varepsilon^2} a_2 \varepsilon^2,
\quad
1 - \alpha(\bm{z}) a_2
=
-\frac{a_2 - a_1}{a_1^2 + a_2^2 \varepsilon^2} a_1.
\end{align}
Plugging these back into \eqref{eq:Rz-int-0} gives
\begin{align}
{\cal R}(\bm{z}) 
&=
c_1 \left[ 
\frac{a_2 - a_1}{a_1^2 + a_2^2 \varepsilon^2} m_1 a_2 \varepsilon^2 \bm{v}_1 
 -
\frac{a_2 - a_1}{a_1^2 + a_2^2 \varepsilon^2} m_2 a_1 \varepsilon \bm{v}_2 
\right].
\end{align}
Pulling out the common factor of $\frac{a_2 - a_1}{a_1^2 + a_2^2 \varepsilon^2} \varepsilon$ gives the claim \eqref{eq:Rz}.
\end{proof}

\subsection{Proof of \cref{thm:eigvec}}
\label{app:proof-thm-eigvec}

\begin{proof}

For readability, let us set $i = 1$ and $j = 2$ in \eqref{eq:z-def-2}.
For convenience, throughout the proof we also use the eigenvalues of $A$ given by $a_1 = 1 - m_1$, and $a_2 = 1 - m_2$.
From \eqref{eq:Rz} of \cref{lem:Rz} we have that
\begin{align} \label{eq:Rz-int-2}
{\cal R} ( \bm{z} )
=
c_1
\frac{(a_2 - a_1) \varepsilon }{a_1^2 + a_2^2 \varepsilon^2} 
\big[
m_1 a_2 \varepsilon \bm{v}_1
-
m_2 a_1 \bm{v}_2
\big].
\end{align}
We now consider three distinct cases.

\underline{$m_1 = m_2$:} Clearly $m_2 = m_1 \Longleftrightarrow a_2 = a_1 $. As such, if $m_1 = m_2$ then ${\cal R}( \bm{z} ) = \bm{0}$, as claimed in \eqref{eq:Rz-zero}.
We note that this is also valid when $m_1 = m_2 = 0$.
Thus, moving forward we assume that $m_1$ and $m_2$ are distinct.

\underline{$m_1 \neq m_2$ and $m_1 = 0$ or $m_2 = 0$:}
From \eqref{eq:Rz-int-2}, it is the case that ${\cal R} ( \bm{z} ) = c \bm{v}$ where $c$ is a non-zero constant and $\bm{v}$ is an eigenvector of $M$.
By virtue of \eqref{eq:Rcv} from \cref{lem:Rcv}, it must be the case that
${\cal R} ({\cal R} ( \bm{z} )) = X(\bm{z}) \bm{z} = \bm{0}$; note that this result is consistent with \cref{thm:eigvec} because the eigenvalue $\lambda$ in \eqref{eq:lambda} vanishes if either of $m_1$ or $m_2$ are zero.
Thus, moving forward, we assume that $m_1$ and $m_2$ are distinct and non-zero.

\underline{$m_1 \neq m_2$ and $m_1 \neq 0$ and $m_2 \neq 0$:} Recalling that neither $a_1$ nor $a_2$ are zero because then $A$ would not be invertible, from \eqref{eq:Rz-int-2} we must have 
${\cal R} ( \bm{z} )
=
\wt{c}_1 \bm{v}_1
+
\wt{c}_2 \bm{v}_2
$
for two non-zero constants $\wt{c}_1$, $\wt{c}_2$.
We now use this to re-write \eqref{eq:Rz-int-2} as
\begin{align} \label{eq:Rz-int-3}
{\cal R} ( \bm{z} )
=
\underbrace{
c_1
\frac{(a_2 - a_1) m_1 a_2 \varepsilon^2}{a_1^2 + a_2^2 \varepsilon^2} 
}_{\displaystyle =: \wt{c}_1}
\bigg[
\bm{v}_1
+
\underbrace{
\left(
-
\frac{m_2 a_1}{m_1 a_2} \frac{1}{\varepsilon}
\right)
}_{\displaystyle =: \wt{\varepsilon}}
\bm{v}_2
\bigg]
=
\wt{c}_1 ( \bm{v}_1 + \wt{\varepsilon} \bm{v}_2 )
=:
\wt{\bm{z}}
\end{align}
Now we apply ${\cal R}$ to this vector; from \eqref{eq:Rz} of \cref{lem:Rz} we have that
\begin{align} \label{eq:RRz-int-1}
{\cal R} ({\cal R} ( \bm{z} ))
=
{\cal R} ( \wt{\bm{z}} )
=
\wt{c}_1
\frac{(a_2 - a_1) \wt{\varepsilon} }{a_1^2 + a_2^2 \wt{\varepsilon}^2} 
\big[
m_1 a_2 \wt{\varepsilon} \bm{v}_1
-
m_2 a_1 \bm{v}_2
\big].
\end{align}
Clearly $\wt{\varepsilon} \neq 0$, and, thus, by the same argument as previously, we can pull out the factor $m_1 a_2 \wt{\varepsilon}$ to get
\begin{align} \label{eq:RRz-int-2}
{\cal R} ({\cal R} ( \bm{z} ))
=
\wt{c}_1
\frac{(a_2 - a_1) m_1 a_2 \wt{\varepsilon}^2 }{a_1^2 + a_2^2 \wt{\varepsilon}^2} 
\bigg[
\bm{v}_1
+
\left( 
-
\frac{m_2 a_1}{m_1 a_2 } \frac{1}{\wt{\varepsilon}}
\right)
\bm{v}_2
\bigg].
\end{align}
From the definition of $\wt{\varepsilon}$ in \eqref{eq:Rz-int-3}, note that $-\frac{m_2 a_1}{m_1 a_2 } \frac{1}{\wt{\varepsilon}} = \varepsilon$.
Plugging this into \eqref{eq:RRz-int-2} and replacing $\wt{c}_1$ in \eqref{eq:RRz-int-2} with its definition in \eqref{eq:Rz-int-3} results in
\begin{align} \label{eq:RRz-int-3}
X(\bm{z}) \bm{z}
=
{\cal R} ({\cal R} ( \bm{z} ))
=
\frac{(a_2 - a_1) m_1 a_2 \varepsilon^2}{a_1^2 + a_2^2 \varepsilon^2}
\frac{(a_2 - a_1) m_1 a_2 \wt{\varepsilon}^2 }{a_1^2 + a_2^2 \wt{\varepsilon}^2}
\cdot 
\Big[
c_1
(\bm{v}_1
+
\varepsilon
\bm{v}_2)
\Big].
\end{align}
Recalling from \eqref{eq:z-def-2} that $\bm{z} = c_1
(\bm{v}_1
+
\varepsilon
\bm{v}_2)$,
we thus have that $\bm{z}$ is an eigenvector of $X(\bm{z})$ as claimed in \cref{thm:eigvec}, where the associated eigenvalue $\lambda$ is the leading constant in \eqref{eq:RRz-int-3}.\\
We now simplify the expression for the eigenvalue in \eqref{eq:RRz-int-3}.
First we write
\begin{align} \label{eq:lambda-int-1}
\lambda
=
\frac{(a_2 - a_1) m_1 a_2 \varepsilon^2}{a_1^2 + a_2^2 \varepsilon^2}
\frac{(a_2 - a_1) m_1 a_2 \wt{\varepsilon}^2 }{a_1^2 + a_2^2 \wt{\varepsilon}^2}
=
\frac{(a_2 - a_1)^2}{a_1^2 + a_2^2 \varepsilon^2}
\frac{(m_1 a_2 \varepsilon \wt{\varepsilon})^2}{a_1^2 + a_2^2 \wt{\varepsilon}^2}.
\end{align}
Plugging in $\wt{\varepsilon} = -\frac{m_2 a_1}{m_1 a_2} \frac{1}{\varepsilon}$ to the second factor we find
\begin{align}
\frac{(m_1 a_2 \varepsilon \wt{\varepsilon})^2}{a_1^2 + a_2^2 \wt{\varepsilon}^2}
=
\frac{(m_2 a_1)^2}{\displaystyle a_1^2 + \frac{m_2^2 a_1^2}{\varepsilon^2 m_1^2} }
=
\frac{(m_1 m_2)^2}{\displaystyle m_1^2 + \frac{m_2^2}{\varepsilon^2}   }.
\end{align}
Plugging this into \eqref{eq:lambda-int-1} and replacing $a_1 = 1 - m_1$ and $a_2 = 1 - m_2$ gives \eqref{eq:lambda}.
\end{proof}

\subsection{Proof of \cref{lem:max-lambda}}
\label{app:proof-lem-lambda-max}

\begin{proof}
For readability, set $i = 1$, $j = 2$.
Using \eqref{eq:lambda} we may express $\lambda(\varepsilon)$ as
$
\lambda( \varepsilon ) = c / g(\varepsilon),
$
where $c = (m_1 m_2)^2 (m_2 - m_1)^2$ is a constant, and the function $g(\varepsilon)$ is
\begin{align} \label{eq:g-def}
g(\varepsilon)
=
\Big[ 
(m_1-1)^2 +  \varepsilon^2 (m_2 - 1 )^2
\Big]
\bigg[ 
m_1^2 + \frac{m_2^2}{\varepsilon^2}
\bigg].
\end{align}
Critical points of $\lambda$ occur where $\lambda'(\varepsilon) = - \frac{c g'( \varepsilon)}{[g(\varepsilon)]^2} = 0$. Since $g > 0$, it suffices to find the roots of $g'( \varepsilon) = 0$.
By the product rule, we have
\begin{align}
g'( \varepsilon)
=
2 \varepsilon (m_2 - 1 )^2
\bigg[ 
m_1^2 + \frac{m_2^2}{\varepsilon^2}
\bigg]
-
\Big[ 
(m_1-1)^2 +  \varepsilon^2 (m_2 - 1 )^2
\Big]
\frac{2 m_2^2}{\varepsilon^3}.
\end{align}
Setting this equal to zero and multiplying through by $\varepsilon^3/2$ and then cancelling common terms results in the equation
$
\varepsilon^4
=
\left( \tfrac{m_2 (m_1 - 1)}{m_1 (m_2 - 1)} \right)^2.
$
Taking the fourth root of both sides and discarding the two imaginary solutions results in the critical values $\varepsilon = \pm \sqrt{\left| \frac{m_2 (m_1 - 1)}{m_1 (m_2 - 1)}\right| }$.
Noting from \eqref{eq:g-def} that $g(\varepsilon)$ is symmetric about $\varepsilon = 0$, and that $\lim_{\varepsilon \to \infty} g( \varepsilon ) = \lim_{\varepsilon \to 0} g( \varepsilon ) = \infty$, it must be the case that these two critical points of $g$ are global minima.
Recalling $\lambda = c / g$, global minima of $g$ must correspond to global maxima of $\lambda$.
Plugging these critical values into $g$ and simplifying results in \eqref{eq:lambda-max}.
\end{proof}

\subsection{Proof of \cref{thm:2x2}}
\label{app:symm-2x2-proof}

\begin{proof}
Result \eqref{eq:rAA1-rho-special} follows from \cref{lem:Rcv} in the case that $\bm{r}_0$ is parallel to an eigenvector.
The more general result \eqref{eq:rAA1-rho-general} follows from the more general nonlinear eigenvalue result of \cref{thm:eigvec}. 
Specifically, taking $\bm{r}_0$ equal to $\bm{z}$ in \cref{thm:eigvec} tells us that the residual vectors of the rAA(1) iteration will be four-periodic with
\begin{align} \label{eq:r4j-ratios}
\bm{r}_{4j}
=
\lambda( \varepsilon )
\bm{r}_{4(j-1)}
\quad
\Longrightarrow
\quad
\frac{\Vert \bm{r}_{4j} \Vert}{\Vert \bm{r}_{4(j-1)} \Vert} 
= 
\lambda( \varepsilon ),
\quad
j = 1, 2, \ldots
\end{align}
Now consider $\varrho_{k}(\bm{r}_0)$ from \cref{def:conv-fac}; for $k \geq 4$ we have:
\begin{align}
\label{eq:rho_k}
\varrho_{k}(\bm{r}_0) 
=
\Vert \bm{r}_0 \Vert^{1/k}
\left( \frac{\Vert \bm{r}_1 \Vert}{\Vert \bm{r}_0 \Vert} 
\,
\frac{\Vert \bm{r}_2 \Vert}{\Vert \bm{r}_1 \Vert}
\cdots 
\frac{\Vert \bm{r}_{k-1} \Vert}{\Vert \bm{r}_{k-2} \Vert}
\,
\frac{\Vert \bm{r}_k \Vert}{\Vert \bm{r}_{k-1} \Vert} \right)^{1/k}
=
\big( \varrho_{\ell} \Vert \bm{r}_0 \Vert \big)^{1/k}
\left( 
\prod \limits_{j = 1}^{\ell}
\frac{\Vert \bm{r}_{j} \Vert}{\Vert \bm{r}_{j-1} \Vert} 
\right)^{1/k}
\end{align}
where we have defined the shorthands
\begin{align}
\ell
:=
k - k \textrm{ mod } 4,
\quad
\textrm{and}
\quad
\varrho_{\ell}
:=
\begin{cases}
1, \quad &k \textrm{ mod } 4 = 0,
\\[1ex]
\frac{\Vert \bm{r}_k \Vert}{\Vert \bm{r}_{k-1} \Vert},
\quad
&k \textrm{ mod } 4 = 1,
\\[1ex]
\frac{\Vert \bm{r}_{k-1} \Vert}{\Vert \bm{r}_{k-2} \Vert}
\frac{\Vert \bm{r}_k \Vert}{\Vert \bm{r}_{k-1} \Vert},
\quad
&k \textrm{ mod } 4 = 2,
\\[1ex]
\frac{\Vert \bm{r}_{k-2} \Vert}{\Vert \bm{r}_{k-3} \Vert}
\frac{\Vert \bm{r}_{k-1} \Vert}{\Vert \bm{r}_{k-2} \Vert}
\frac{\Vert \bm{r}_k \Vert}{\Vert \bm{r}_{k-1} \Vert},
\quad
&k \textrm{ mod } 4 = 3.
\end{cases}
\end{align}
Since $\ell$ is divisible by four, we judiciously telescopically cancel some terms in \eqref{eq:rho_k}:
\begin{align}
\left( 
\prod \limits_{j = 1}^{\ell}
\frac{\Vert \bm{r}_{j} \Vert}{\Vert \bm{r}_{j-1} \Vert} 
\right)^{1/k}
=
\left( 
\prod \limits_{j = 1}^{\ell / 4}
\frac{\Vert \bm{r}_{4j} \Vert}{\Vert \bm{r}_{4(j-1)} \Vert} 
\right)^{1/k}
=
\left( 
\big[ \lambda( \varepsilon ) \big]^{1/4}
\right)^{\ell/k}
\end{align}
where we used \eqref{eq:r4j-ratios} to plug in $\lambda( \varepsilon )$.
Now we take the limit of $k \to \infty$, noting that $1/k \to 0$, and $\ell / k \to 1$, such that $\varrho( \bm{r}_0 ) := \lim_{k \to \infty} \varrho_k( \bm{r}_0 ) = \big[ \lambda( \varepsilon ) \big]^{1/4}$.
Plugging in the expression for $\lambda( \varepsilon )$ given in \eqref{eq:lambda} yields \eqref{eq:rAA1-rho-general}.
\end{proof}


\newpage

\setcounter{section}{0}
\setcounter{equation}{0}
\setcounter{figure}{0}
\setcounter{table}{0}
\setcounter{page}{1}
\makeatletter
\renewcommand{\thesection}{SM\arabic{section}}
\renewcommand{\theequation}{SM\arabic{equation}}
\renewcommand{\thefigure}{SM\arabic{figure}}
\renewcommand{\thetable}{SM\arabic{table}}
\renewcommand{\thepage}{SM\arabic{page}}

\thispagestyle{plain} 

\headers{{SUPPLEMENTARY MATERIALS: Asymptotic convergence of rAA(1)}}{O. A. Krzysik, H. De Sterck, A. Smith}

\begin{center}
    \textbf{\normalsize\MakeUppercase{
    Supplementary Materials:
    Asymptotic convergence of restarted Anderson acceleration for certain normal linear systems}} 
    \vspace{6ex}
\end{center}

\section{Periodicity of rAA($m$) residuals}
\label{SMsec:rAAm}

As mentioned at the end of Section \ref{sec:symm-eig}, in our numerical tests we often observe that residuals of rAA($m$) are approximately periodic with period $2(m+1)$.
To obtain an rAA($m$) iteration, i.e., restarted AA with a restart window of $m \geq 0$, we take the general AA iteration \eqref{eq:AA} with memory parameter $m_k = k \, \mathrm{mod} \, (m+1)$, which means that every $m+1\geq1$ iterations no previous memory is used in computing the corresponding iterate. 

As a numerical demonstration of this phenomenon, we take a randomly generated symmetric matrix $M \in \mathbb{R}^{14 \times 14}$ and we run the above rAA($m$) iteration with underlying fixed-point function $\bm{q}(\bm{x}) = M \bm{x}$.
For a fixed restart size $m$ and a fixed initial iterate $\bm{x}_0$ we generate a sequence of rAA($m$) residual vectors $\{ \bm{r}_k \}_{k = 0}$ and compute the following function of this sequence:
\begin{align} \label{eq:sm-angle}
\left| 1 - \frac{\langle \bm{r}_{k}, \bm{r}_{k+2(m+1)} \rangle}{\Vert \bm{r}_k \Vert \Vert \bm{r}_{k + 2(m+1)} \Vert} \right|
=
\left| 1 - \cos \theta_{k, k + 2(m+1)} \right|, \quad k = 0, 1, 2, \ldots, 
\end{align}
with $\theta_{k, k + 2(m+1)}$ the angle between the vectors $\bm{r}_{k}$ and $\bm{r}_{k + 2(m+1)}$.
\Cref{fig:rAAm-periodicity} shows plots of this function for $m \in \{1, 3, 5, 7\}$, where for each $m$ we run the iteration for four randomly chosen initial iterates $\bm{x}_0$.
Supposing the residual vectors are non-zero, notice that the quantity in \eqref{eq:sm-angle} is zero if and only if the rAA($m$) residual vectors are $2(m+1)$-periodic because then $\theta_{k, k + 2(m+1)} = 0$.
Considering the plots, notice that overall \eqref{eq:sm-angle} is small for $k$ large enough, indicating that the residual vectors are indeed approximately $2(m+1)$ periodic.
Moreover, in most cases the function \eqref{eq:sm-angle} appears to decrease with $k$, indicating that as a given rAA($m$) iteration proceeds the residuals get closer and closer to being parallel.
It is also interesting to note that in the case of $m=1$ this function tends to zero much faster and more ``cleanly'' than in the other cases.
Without further study the reason for this qualitative difference for $m > 1$ remains unclear.
E.g., rounding errors may play a role in this difference given that we are attempting to accurately measure functions of numerically small residuals.

\begin{figure}[t!]
\centerline{
\includegraphics[width=0.475\textwidth]{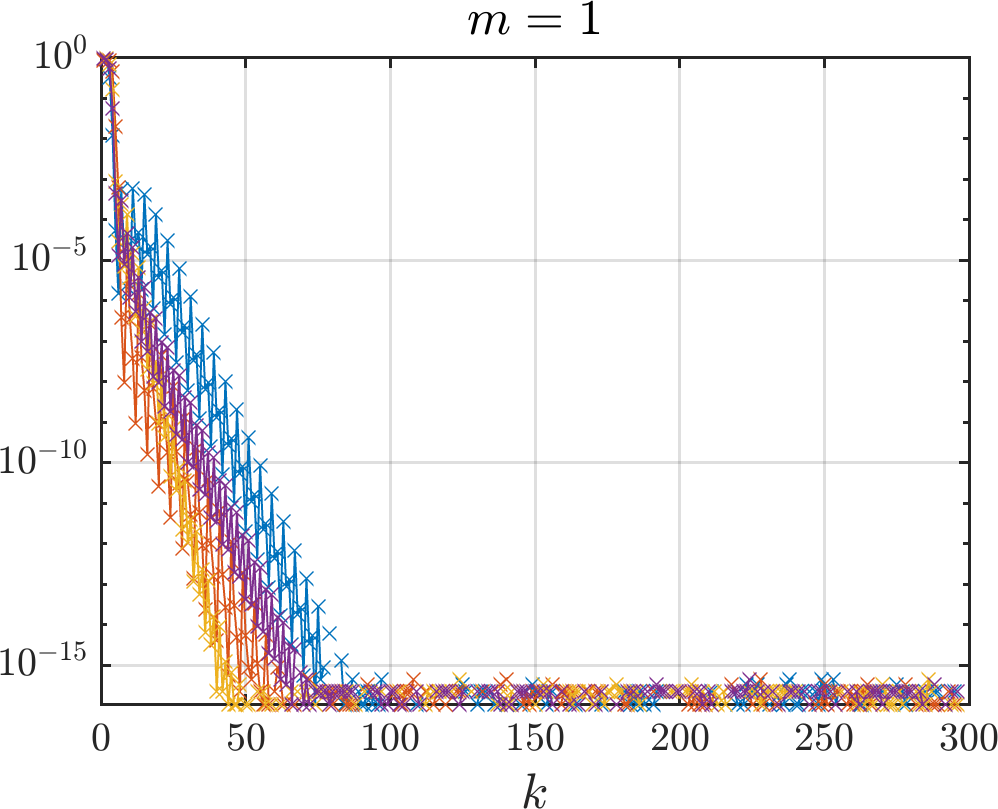}
\hspace{1ex}
\includegraphics[width=0.475\textwidth]{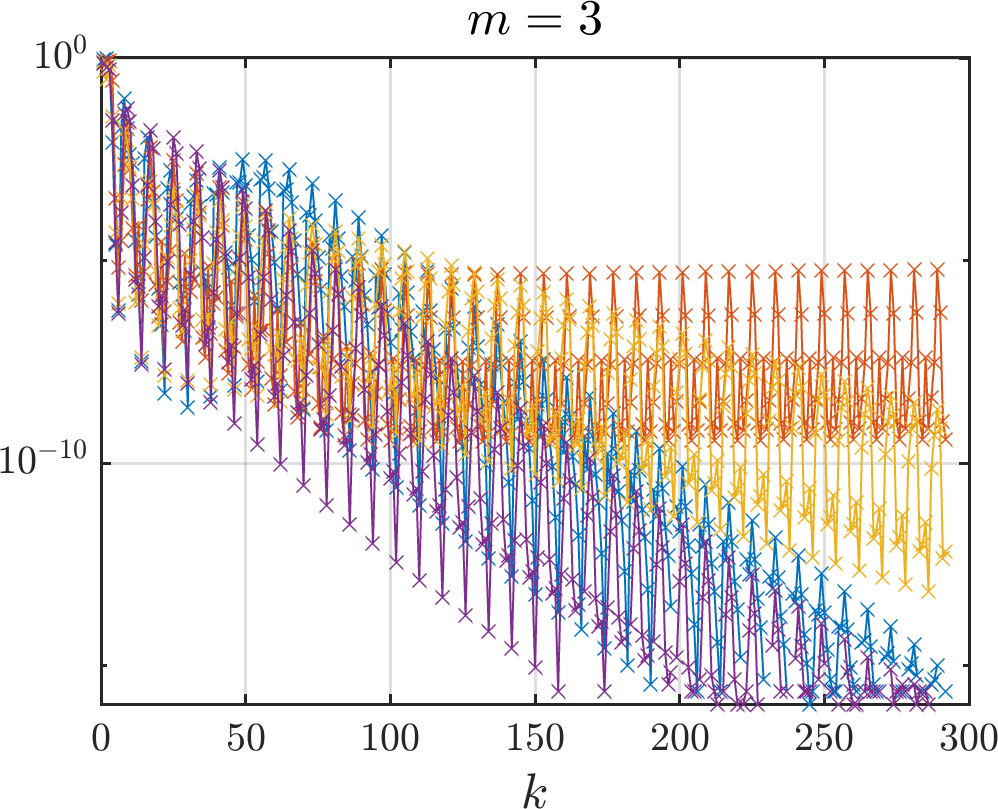}
}
\vspace{1ex}
\centerline{
\hspace{-1.5ex}
\includegraphics[width=0.475\textwidth]{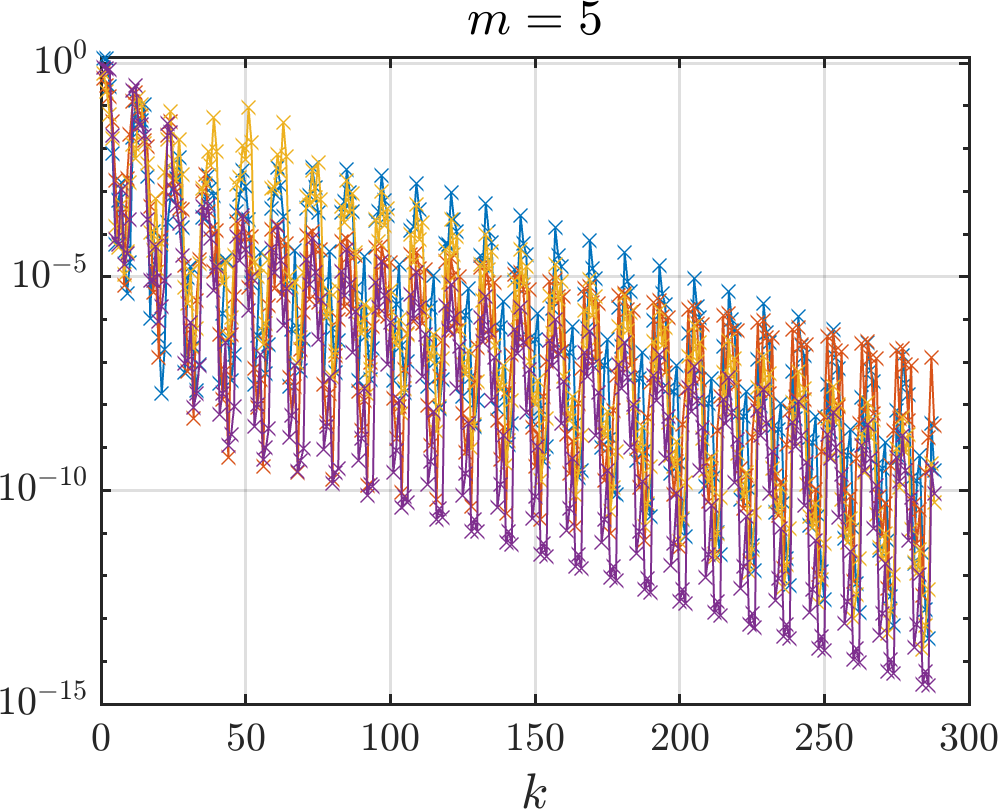}
\hspace{1.5ex}
\includegraphics[width=0.475\textwidth]{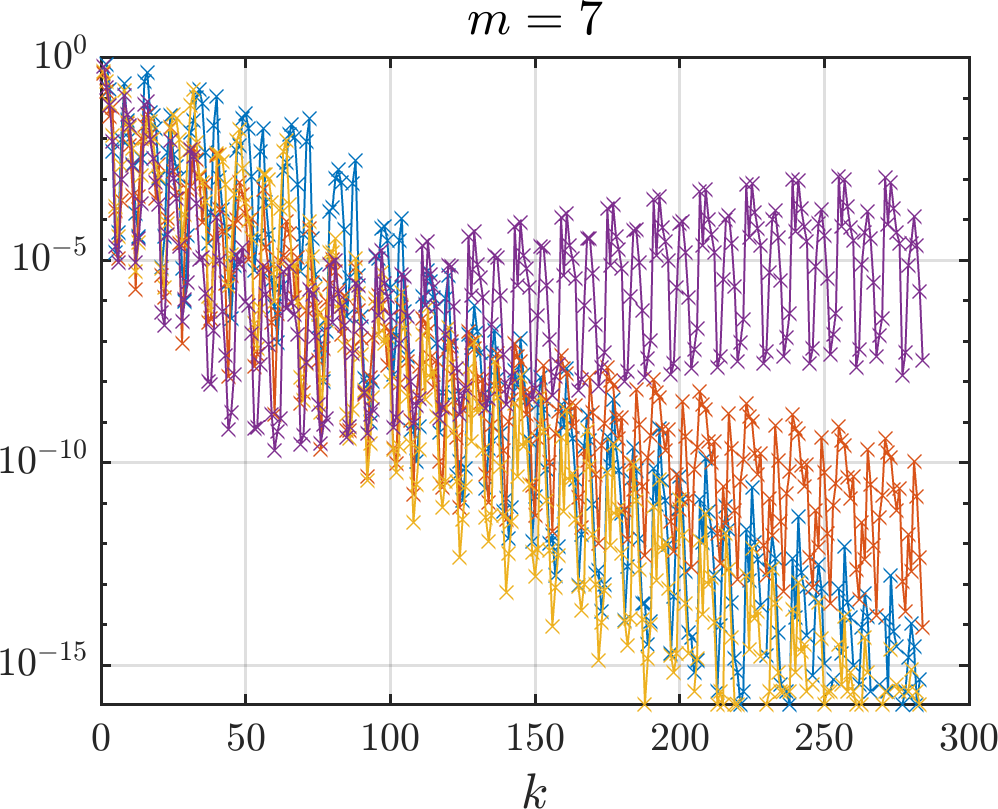}
}
\caption{For $m \in \{1, 3, 5, 7\}$, the function \eqref{eq:sm-angle} is shown which is a proxy for the angle between rAA($m$) residual vectors $2(m+1)$ iterations apart. For each $m$, four different sequences of residual vectors are shown corresponding to four different initial iterates $\bm{x}_0$.
Note that the same set of four initial iterates $\{ \bm{x}_0 \}$ is used for all $m$.
\label{fig:rAAm-periodicity}
}
\end{figure}
%

\newpage
\section{Supporting numerical evidence for conjectures}
\label{SMsec:conj-num-res}

\subsection{Conjecture \ref{conj:X-fp-conv}}

In \cref{SMfig:symm-asymptotic-conj} we present numerical evidence for Conjecture  \ref{conj:X-fp-conv}. Here we show for the first test problem in Section \ref{sec:symm-num-res} and Figure \ref{fig:symm-rhok} that as $k \to \infty$ the residual vector tends to a linear combination of two eigenvectors of $M$.
Further numerical tests (not shown here) for other matrices $M$, such as the two-grid operator \eqref{eq:symmM-mg}, indicate that while Conjecture  \ref{conj:X-fp-conv} likely holds, the residual does not necessarily appear to converge to a linear combination of just two eigenvectors.

\begin{figure}[h!]
\centerline{
\includegraphics[width=0.475\textwidth]{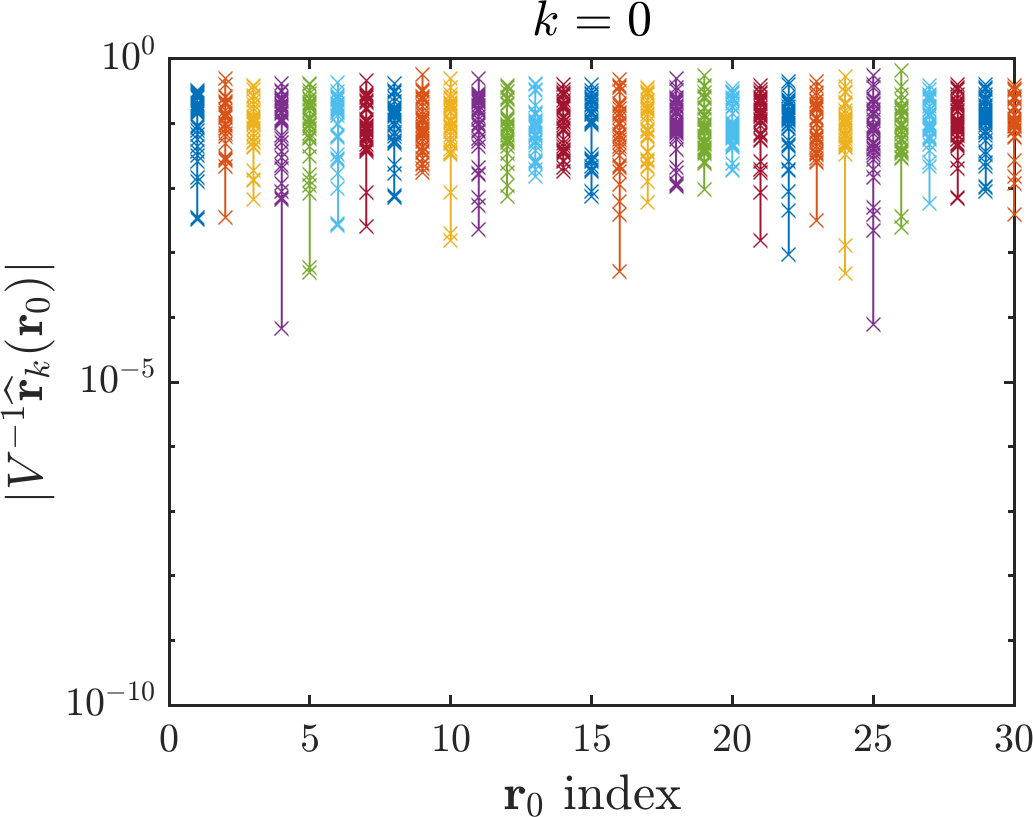}
\hspace{1ex}
\includegraphics[width=0.475\textwidth]{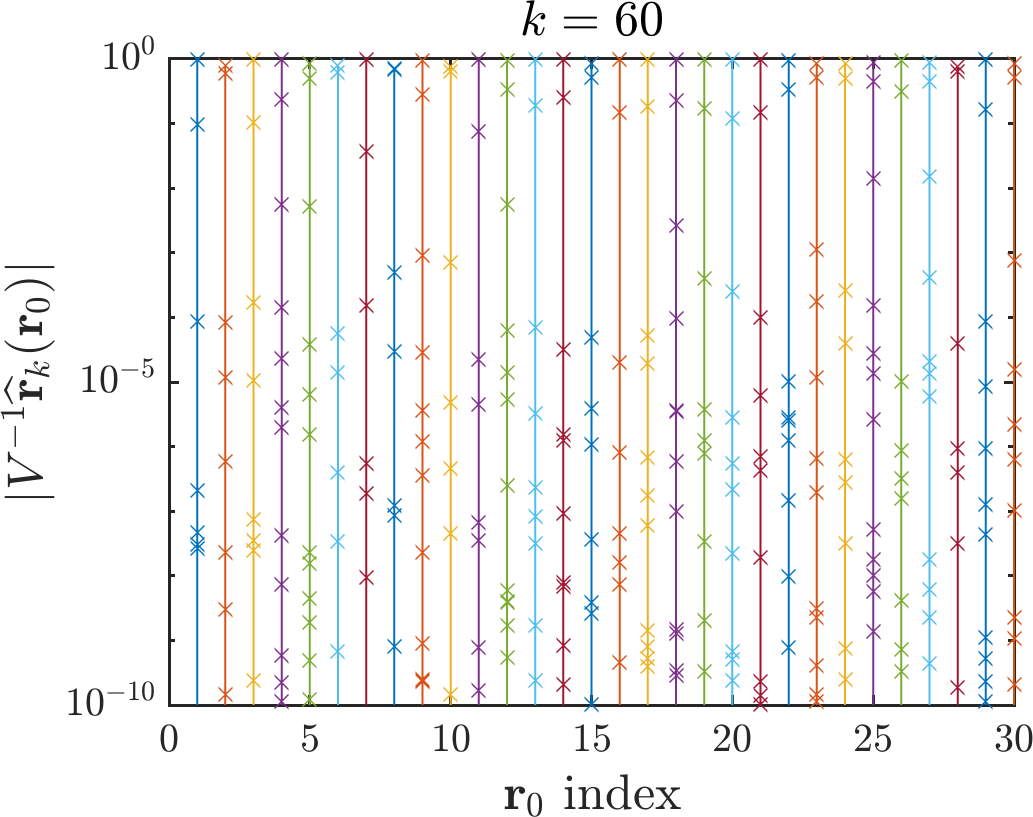}
}
\vspace{1ex}
\centerline{
\includegraphics[width=0.475\textwidth]{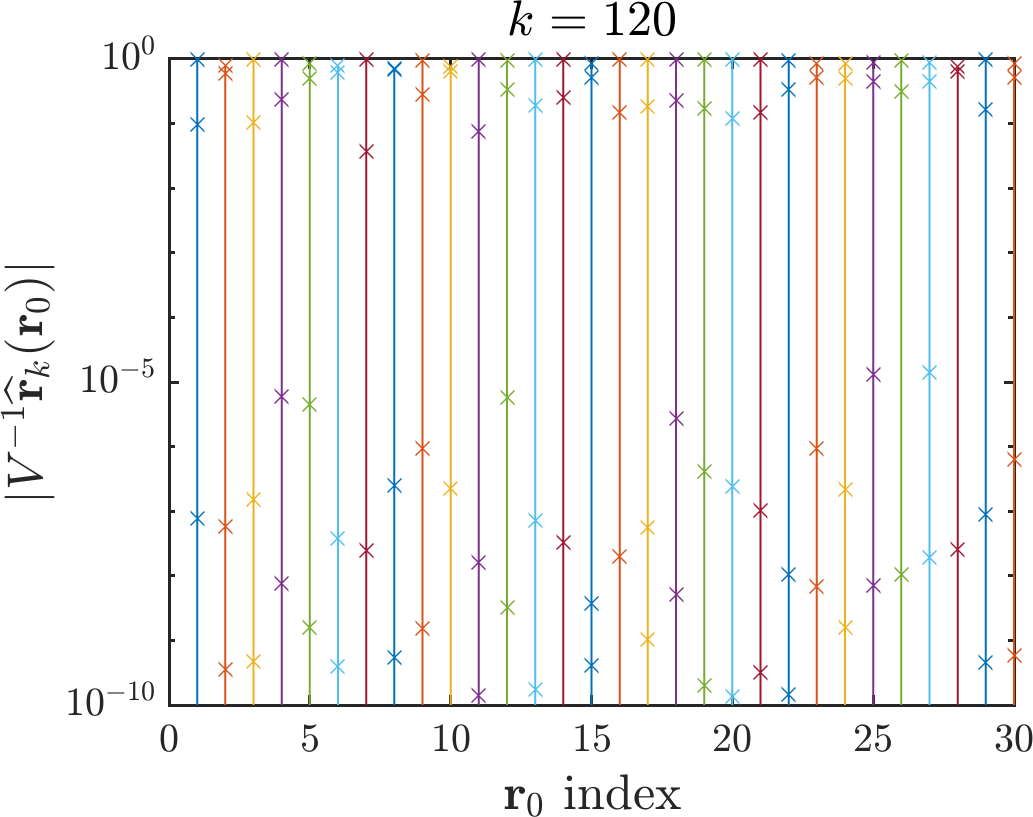}
\hspace{1ex}
\includegraphics[width=0.475\textwidth]{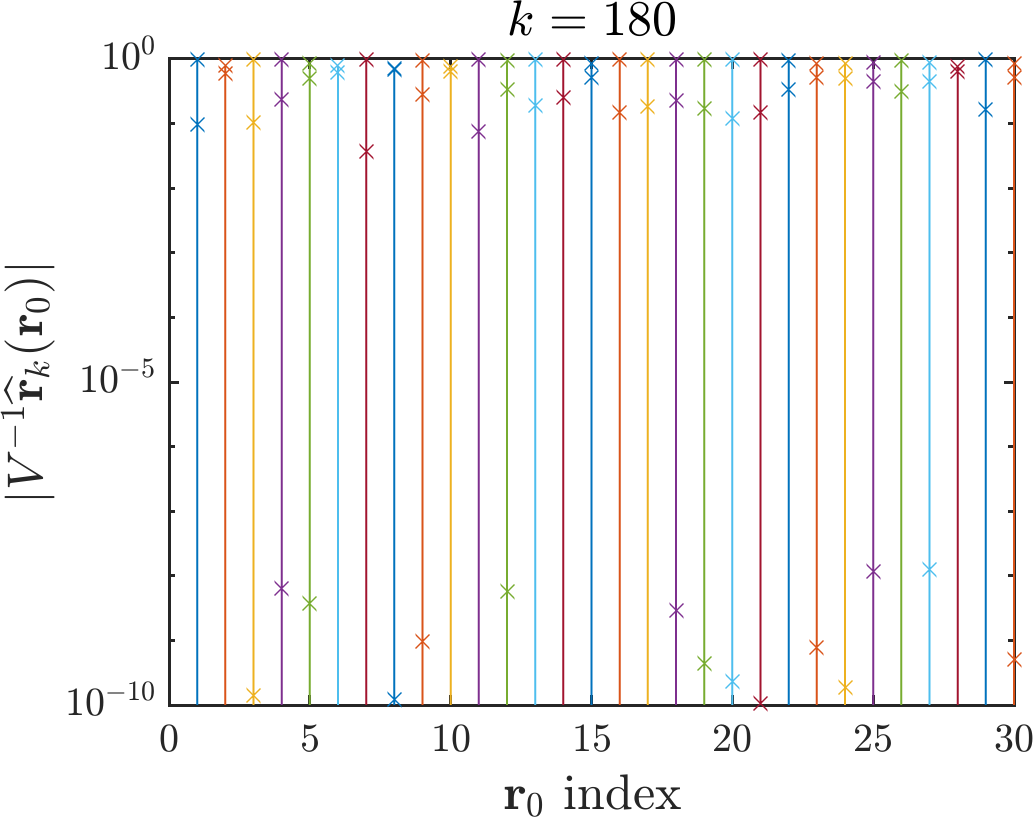}
}
\caption{Supporting numerical evidence for Conjecture \ref{conj:X-fp-conv}. If $V \in \mathbb{R}^{n \times n}$ is a matrix of orthogonal eigenvectors of $M$, and $\wh{\bm{r}}_k := \bm{r}_k / \Vert \bm{r}_k \Vert$, then coefficients in the vector $V^{-1} \wh{\bm{r}}_k$ give the decomposition of the residual in the eigenvector basis of $M$.
In the plots $|V^{-1} \wh{\bm{r}}_k|$ denotes the absolute values of the elements in the vector $V^{-1} \wh{\bm{r}}_k$.
Observe that as $k \to \infty$, it appears that for a given $\bm{r}_0$ all but two of these coefficients tend to zero.
\label{SMfig:symm-asymptotic-conj}
}
\end{figure}

\subsection{Conjecture  \ref{conj:symm-Xvmax}}

In \cref{fig:symm-Xv-conj} we present further supporting numerical evidence for Conjecture \ref{conj:symm-Xvmax}, in addition to that already shown in Figure \ref{fig:symm-rhok}.
We consider a sequence of symmetric matrices $M_{\ell} \in \mathbb{R}^{n(\ell) \times n(\ell)}$ for $\ell = 1, \ldots, 16$, with $n(\ell) = 5 \ell$.
Each $M_{\ell}$ is diagonal, with elements chosen randomly in the interval $[-1, 1]$.
For each $M_{\ell}$, we compute the (fourth root of the) ratio in question from Conjecture \ref{conj:symm-Xvmax}, i.e., $\big( \Vert X_{\ell}(\bm{v}) \bm{v} \Vert / \Vert \bm{v} \Vert \big)^{1/4}$, over a set of 10,000 test vectors $\{ \bm{v} \}$.
To choose these test vectors we consider two strategies: (i) each $\bm{v}$ is chosen randomly, i.e., with random elements drawn from $[-1, 1]$; (ii) for each of the randomly chosen $\bm{v}$ from the first strategy, we iterate 10 times $\bm{v} \gets X_{\ell}(\bm{v}) \bm{v}$, and then we compute $\big( \Vert X_{\ell}(\bm{v}) \bm{v} \Vert / \Vert \bm{v} \Vert / \big)^{1/4}$.
For both sets of test vectors $\{ \bm{v} \}$ we see that the conjecture is not violated since the green markers never exceed the magenta markers.
For the second strategy (right panel) we see that the green markers push right up to the magenta markers, consistent with Conjecture \ref{conj:symm-Xvmax}.
The fact that in the left-hand panel no vectors (other than in the $\ell = 1$ test) come close to attaining the conjectured maximum of $\big( \Vert X_{\ell}(\bm{v}) \bm{v} \Vert / \Vert \bm{v} \Vert / \big)^{1/4}$ suggests that such approximately maximizing vectors are not dense in the set of random vectors.
On the other hand, the fact that the maximum is approximately attained by many vectors in the middle plot suggests that such vectors are relatively denser in the set of vectors that are approximate fixed points of the iteration $\bm{v} \gets X_{\ell}(\bm{v}) \bm{v}$.
%
%

\begin{figure}[h!]
\centerline{
\includegraphics[width=0.475\textwidth]{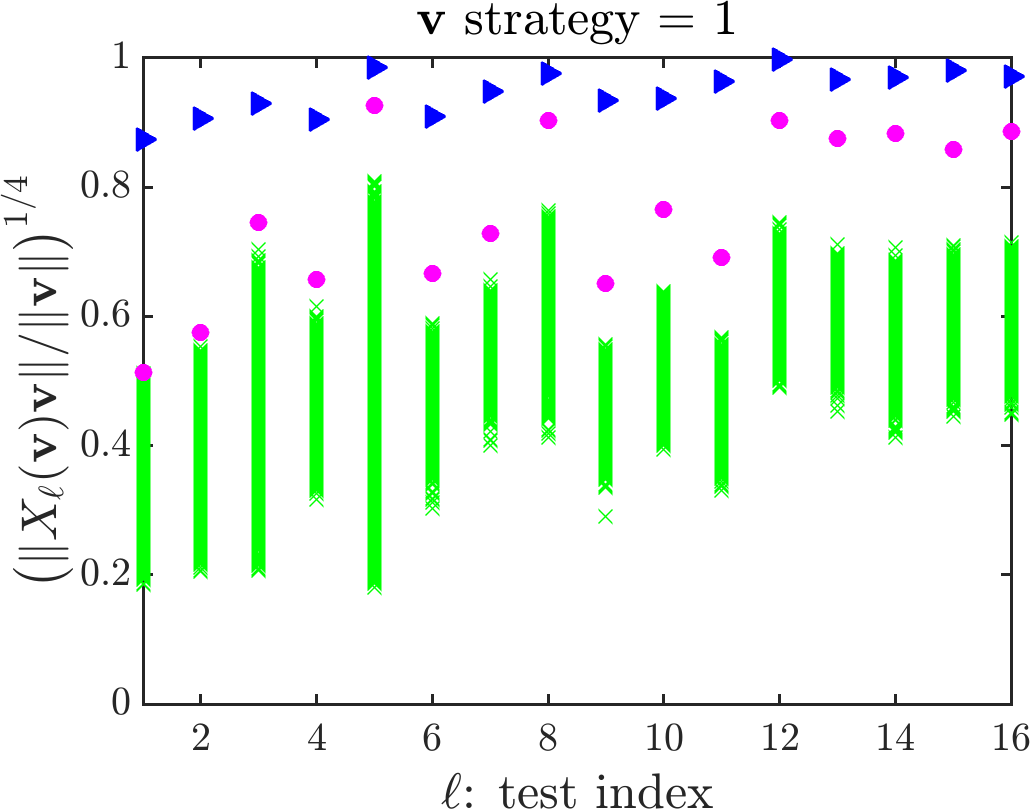}
\hspace{1ex}
\includegraphics[width=0.475\textwidth]{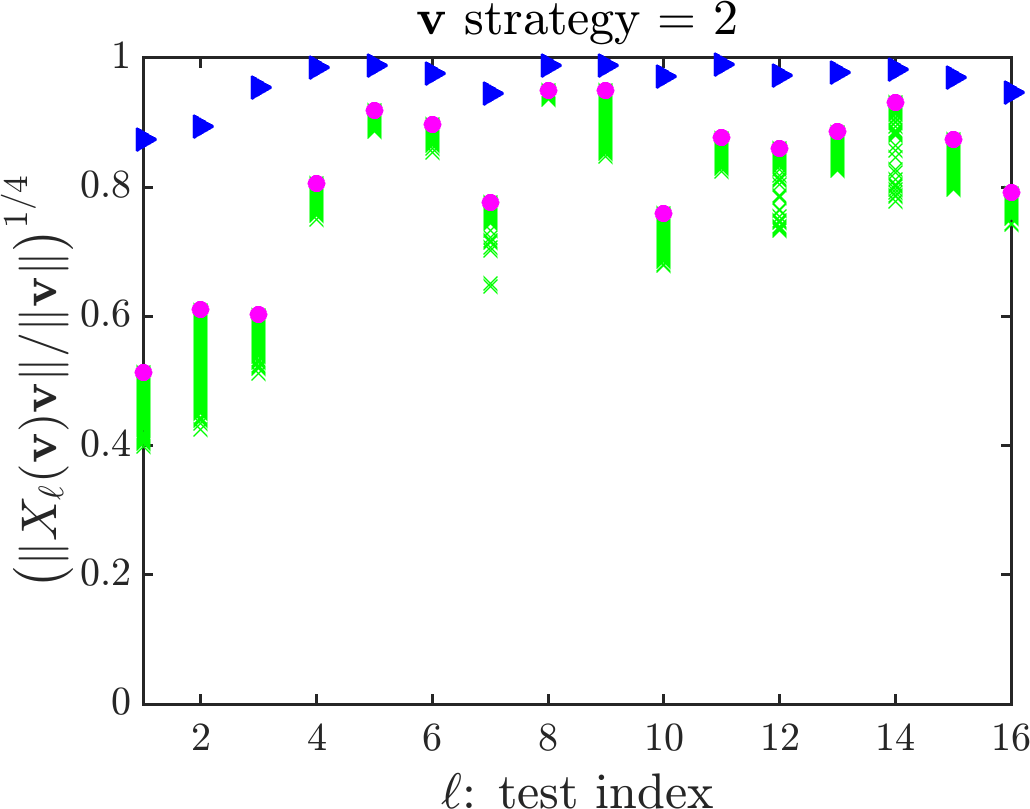}
}
\vspace{1ex}
\centerline{
\includegraphics[width=0.475\textwidth]{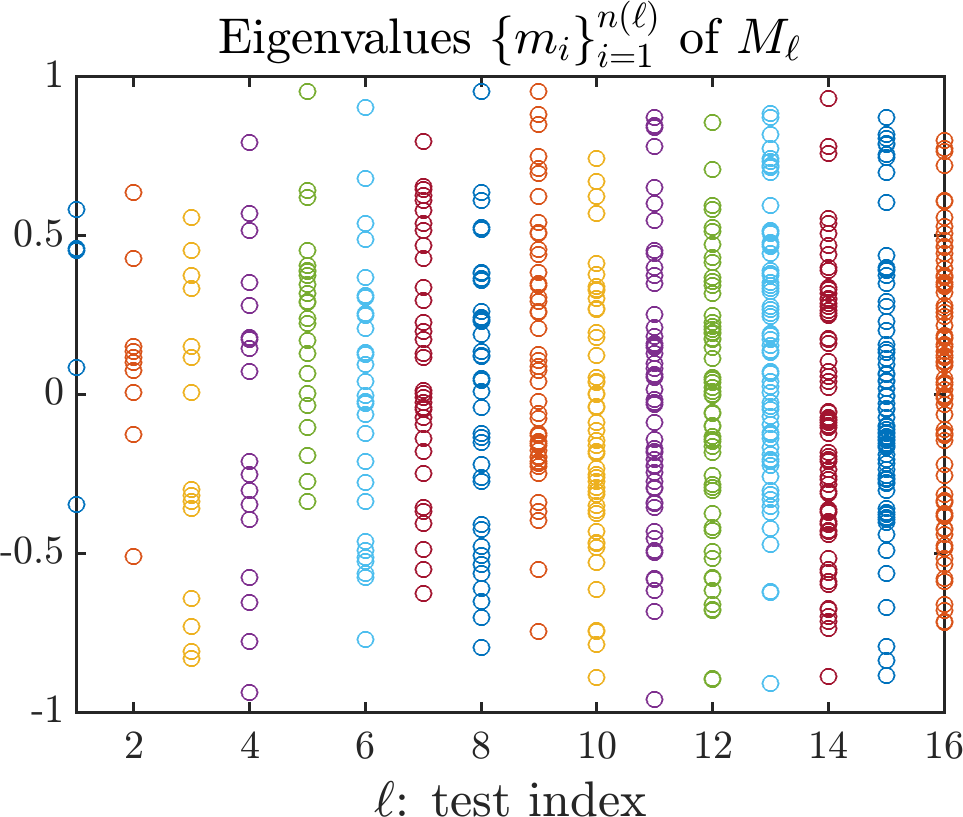}
}
\caption{Supporting numerical evidence for Conjecture \ref{conj:symm-Xvmax}.
Each $\ell$ corresponds to a different symmetric matrix $M_{\ell} \in \mathbb{R}^{n(\ell) \times n(\ell)}$.
The markers in the top row are as follows.
Blue triangle markers correspond to $\Vert M_{\ell} \Vert^{1/4}$.
Magenta circle markers correspond to the conjectured maximum of $\big( \Vert X_{\ell}(\bm{v}) \bm{v} \Vert / \Vert \bm{v} \Vert \big)^{1/4}$.
For a given $\ell$, green markers correspond to values of $\big( \Vert X_{\ell}(\bm{v}) \bm{v} \Vert / \Vert \bm{v} \Vert \big)^{1/4}$ over 10,000 test vectors $\bm{v}$.
\textbf{Top left:} Each test vector $\bm{v}$ is chosen randomly.
\textbf{Top right:} Each of the 10,000 test vectors $\bm{v}$ from the left plot is iterated 10 times under $\bm{v} \gets X_{\ell}(\bm{v}) \bm{v}$ and the resulting vector is used to evaluate $\big( \Vert X_{\ell}(\bm{v}) \bm{v} \Vert /  \Vert \bm{v} \Vert \big)^{1/4}$.
\textbf{Bottom:} Eigenvalue distribution of $M_{\ell}$ for each $\ell$.
\label{fig:symm-Xv-conj}
}
\end{figure}
%

\clearpage
\section{Additional numerical tests in the skew-symmetric setting}
\label{SMsec:skew-additional}

Here we provide additional numerical tests to those already considered in Section \ref{sec:skew-num-res}.
Numerically computed rAA(1) convergence factors and those of the underling fixed-point iteration are shown in \cref{fig:skew-M-additional} for three different skew-symmetric matrices $M$.
In the 1st, 2nd, and 3rd tests, the matrices $M \in \mathbb{R}^{n \times n}$ are of dimension $n = 10, 14$, and 22 respectively.
These matrices were chosen as the skew-symmetric part of matrices output from MATLAB's \texttt{rand}($n,n$).
\begin{figure}[h!]
\centerline{
\includegraphics[width=0.475\textwidth]{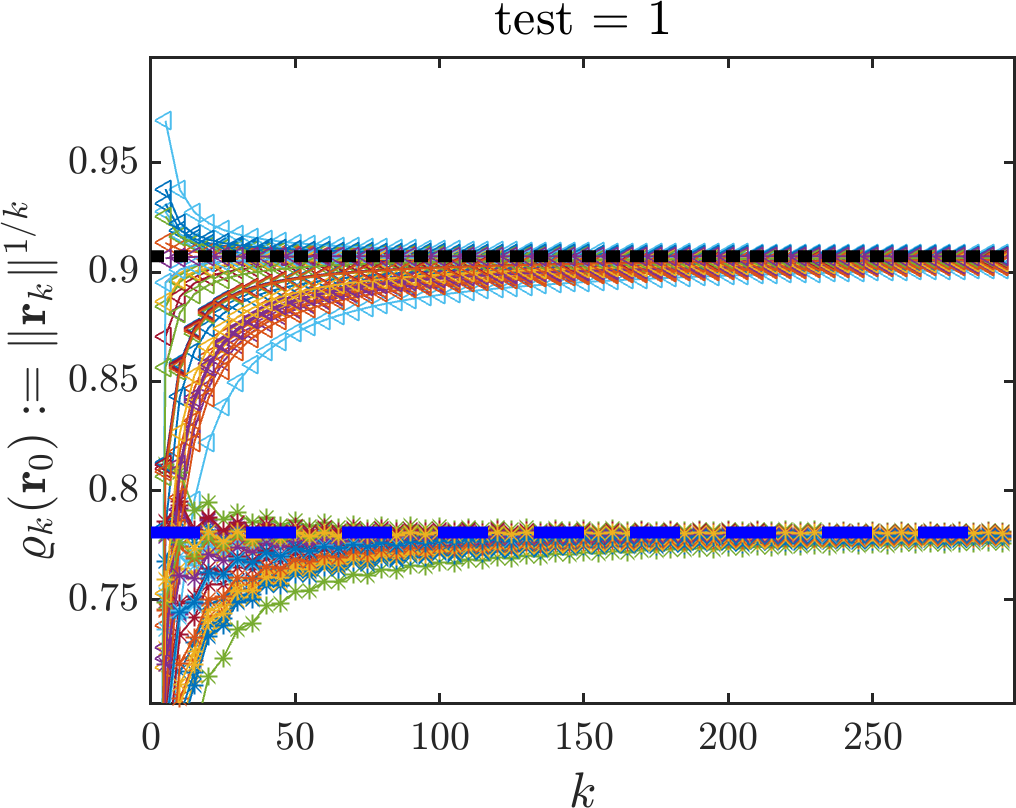}
\hspace{1ex}
\includegraphics[width=0.475\textwidth]{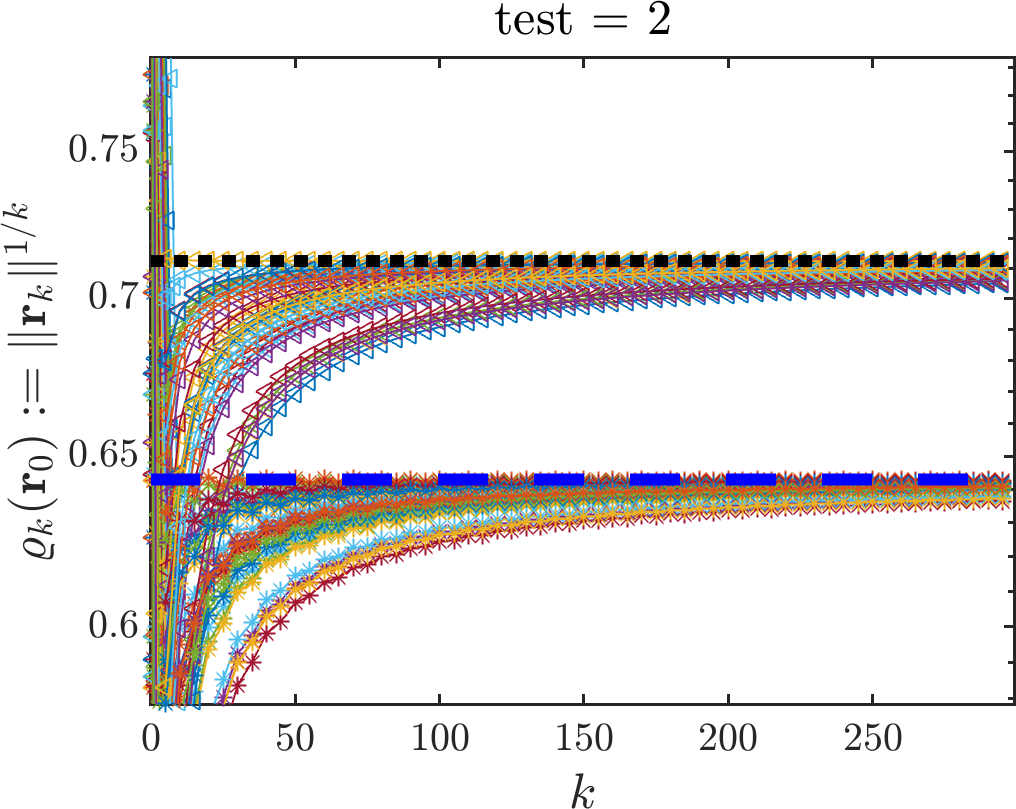}
}
\vspace{1ex}
\centerline{
\hspace{-1.5ex}
\includegraphics[width=0.475\textwidth]{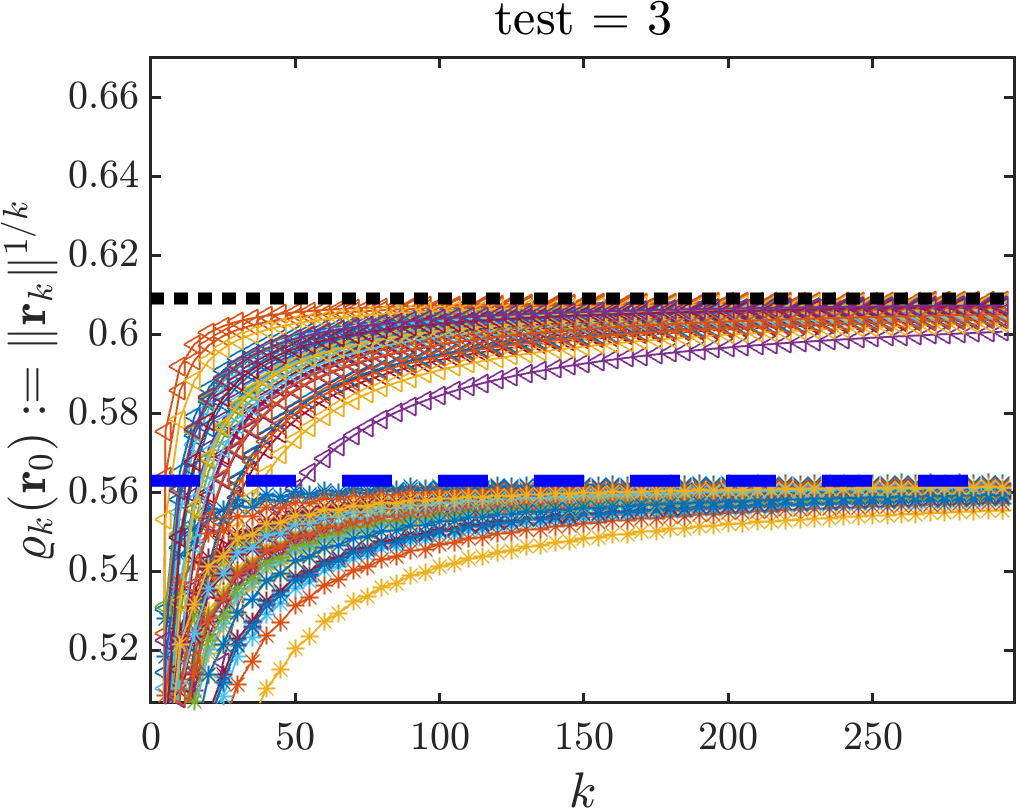}
\hspace{1.5ex}
\includegraphics[width=0.46\textwidth]{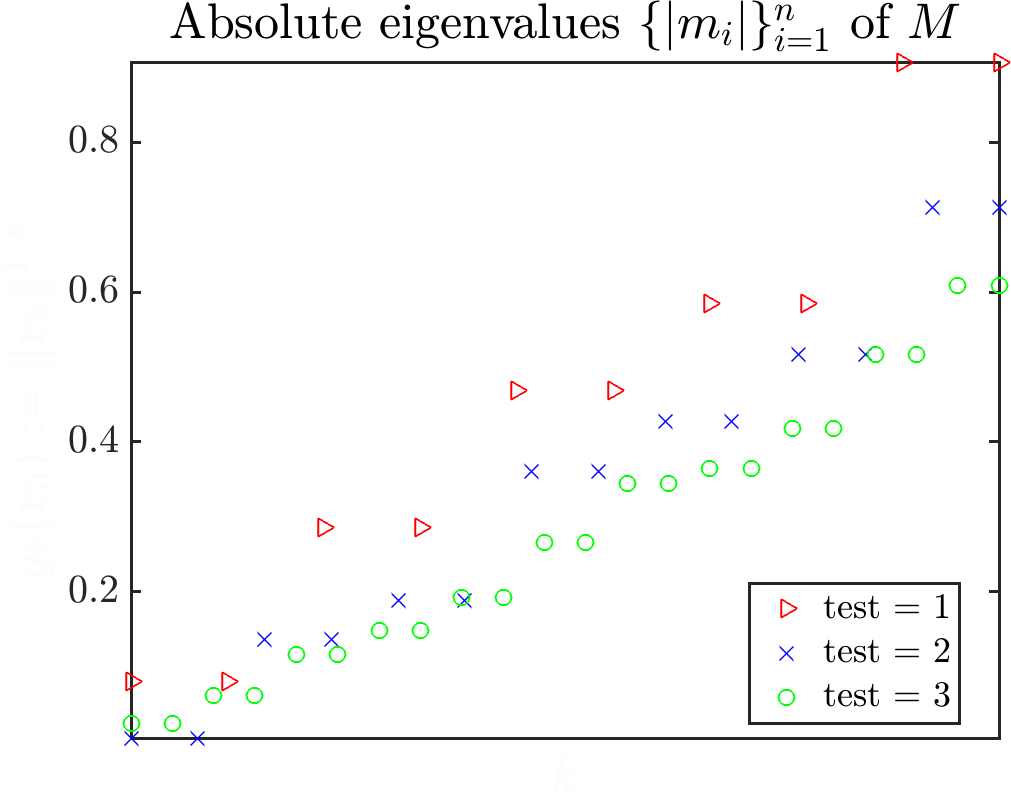}
}
\caption{Additional numerical tests in the case of skew-symmetric $M$.
For each $M$, shown is the convergence factor $\varrho_k(\mathbf{r}_0)$ as a function of $k$ (see Definition \ref{def:conv-fac}).
Triangle markers depict the underlying fixed-point iteration \eqref{eq:PI-iter}, and asterisk markers the rAA(1) iteration \eqref{eq:rAA1-iter}; note that the vector $\bm{b}$ in \eqref{eq:PI-iter} is zero.
For each matrix, each algorithm is initialized with 30 different $\bm{r}_0$ chosen at random.
The worst-case asymptotic convergence factor \eqref{eq:rho-skew-wc} for the underlying fixed-point iteration is shown in each plot as a thick, black dotted line, and that of \eqref{eq:rho-skew-wc} for the rAA(1) iteration is shown as the thick, blue dashed line.
The bottom right plot shows the absolute value of the eigenvalues for each test matrix $M$.
\label{fig:skew-M-additional}
}
\end{figure}

\end{document}